\documentclass[12pt]{article}
\usepackage{amsmath,amscd,amsbsy,amssymb,latexsym,url,bm,amsthm}
\usepackage[vlined,boxed,commentsnumbered,linesnumbered,ruled]{algorithm2e}
\usepackage{epsfig,graphicx,subfigure}
\usepackage{enumitem,balance,mathtools}
\usepackage{wrapfig}
\usepackage{mathrsfs, euscript}
\usepackage[usenames]{xcolor}
\usepackage{hyperref}
\usepackage{epstopdf}
\usepackage{float}
\usepackage{comment}
\setlength{\parindent}{2em}
\author{Fei Wang\thanks{School of Mathematical Sciences, CMA-Shanghai, Shanghai Jiao Tong University, Shanghai,  China \href{mailto:fwang256@sjtu.edu.cn}{\texttt{fwang256@sjtu.edu.cn}}} \and Zeren Zhang\thanks{School of Mathematical Sciences, Shanghai Jiao Tong University, Shanghai, China \href{mailto:zhangzr0018@sjtu.edu.cn}{\texttt{zhangzr0018@sjtu.edu.cn}}}}
\title{The stability threshold for 2D MHD equations around Couette with general  viscosity and magnetic resistivity}
\date{}
\setcounter{tocdepth}{2}

\allowdisplaybreaks[4]
\topmargin -.5in
\textheight 9in
\oddsidemargin -.25in
\evensidemargin -.25in
\textwidth 7in

\newtheorem{theorem}{Theorem}[section]
\newtheorem{lemma}{Lemma}[section]
\newtheorem{proposition}{Proposition}[section]

\newtheorem{remark}{Remark}[section]

\setlength{\lineskiplimit}{2.625bp}
\setlength{\lineskip}{2.625bp}
\numberwithin{equation}{section}

\bibliographystyle{abbrv}

\begin{document}
	\maketitle
	
	\begin{abstract}
		We address a threshold problem of the Couette flow $(y,0)$ in a uniform magnetic field $(\beta,0)$ for the 2D MHD equation on $\mathbb{T}\times\mathbb{R}$ with fluid viscosity $\nu$ and magnetic resistivity $\mu$. The nonlinear enhanced dissipation and inviscid damping are also established. In particularly, when $0<\nu\leq\mu^3\leq1$, we get a threshold $\nu^{\frac{1}{2}}\mu^{\frac{1}{3}}$ in $H^N(N\geq4)$. When $0<\mu^3\leq\nu\leq1$, we obtain a threshold $\min\{\nu^{\frac{1}{2}},\mu^{\frac{1}{2}}\}\min\{1,\nu^{-1}\mu^{\frac{1}{3}}\}$, hence improving the results in \cite{D24,ZZ23b,K24}.
	\end{abstract}
	\section{Introduction}
	In this paper, we study the nonlinear stability of the Couette flow $(y,0)$ in a uniform magnetic field $(\beta,0)$ for the 2-D MHD equations on the domain $(x,y)\in\mathbb{T}\times\mathbb{R}$:
	\begin{equation*}
		\left\{\begin{array}{l}
			\partial_t\tilde{u}+\tilde{u}\cdot\nabla\tilde{u}-\tilde{b}\cdot\nabla\tilde{b}-\nu\Delta\tilde{u}+\nabla\tilde{p}=0,\\
			\partial_t\tilde{b}+\tilde{u}\cdot\nabla\tilde{b}-\tilde{b}\cdot\nabla\tilde{u}-\mu\Delta\tilde{b}=0,\\
			\nabla\cdot\tilde{u}=\nabla\cdot\tilde{b}=0,\\
			\tilde{u}|_{t=0}=\tilde{u}^{in},\quad\tilde{b}|_{t=0}=\tilde{b}^{in},
		\end{array}\right.
	\end{equation*}
	where $\tilde{u}$ is the velocity, $\tilde{b}$ is the magnetic field, $\tilde{p}$ is the pressure, $\nu$ is the fluid viscosity, and $\mu$ is the magnetic resistivity.
	Let $u=\tilde{u}-(y,0),b=\tilde{b}-(\beta,0)$ be the perturbation of velocity and magnetic field. We introduce the vorticity and the current density
	\begin{equation*}
		\omega=\nabla^\bot\cdot u,\ j=\nabla^\bot\cdot b,
	\end{equation*}
	and then $(\omega,j)$ satisfies
	\begin{equation} \label{mhd}
		\left\{\begin{array}{l}
			\partial_t\omega+y\partial_x\omega-\beta\partial_xj-\nu\Delta\omega=\mathrm{NL}^\omega,\\
			\partial_tj+y\partial_xj-\beta\partial_x\omega-\mu\Delta j+2\partial_{xy}\phi=\mathrm{NL}^j,\\
			\omega|_{t=0}=\omega^{in},\quad j|_{t=0}=j^{in},\\
			u=\nabla^\bot\psi,\quad b=\nabla^\bot\phi,\\
			\psi=\Delta^{-1}\omega,\quad \phi=\Delta^{-1}j,\\
		\end{array}\right.
	\end{equation}
	where 
	\begin{align*}
		\mathrm{NL}^{\omega}:&=-u\cdot \nabla\omega+b\cdot\nabla j,\\
		\mathrm{NL}^j:&=-u\cdot \nabla j+b\cdot\nabla\omega+2\partial_{xy}\phi(\omega-2\partial_{xx}\psi)-2\partial_{xy}\psi(j-2\partial_{xx}\phi).
	\end{align*}
	
		We are concerned with the stability threshold problem in the Sobolev space and formulate it as \cite{BGM17}:
	
		\textbf{Stability threshold}: Given $N\geq0$, determine $\gamma_1,\gamma_2\in\mathbb{R}$ such that
		\begin{align*}
			&\|(\omega^{in},j^{in})\|_{H^N}\ll\nu^{\gamma_1}\mu^{\gamma_2}\ \Rightarrow\ \mathrm{stability},\\
			&\|(\omega^{in},j^{in})\|_{H^N}\gg\nu^{\gamma_1}\mu^{\gamma_2}\ \Rightarrow\ \mathrm{possible \ instability}.
		\end{align*}
		
		To quantify the stability threshold is an active area which is initiated first for fluid dynamics, i.e., for the Navier-Stokes (NS) equations. For the NS equations, magnetic resistivity $\mu$ is missing, and hence the above threshold for the stability becomes $\nu^\gamma$ for some $\gamma\geq0$. Bedrossian, Germain, and Masmoudi proved the threshold  $\gamma=\frac{3}{2}$ \cite{BGM17} if the perturbation for velocity belongs to Sobolev space $H^{9/2+}$ in $\mathbb{T}\times\mathbb{R}\times\mathbb{T}$. Then $\gamma=1$ in Sobolev space $H^2$ had been confirmed by Wei and Zhang \cite{WZ21}. A lower $\gamma=\frac{1}{2}$ \cite{BVW18} was obtained in  $H^2$ for the 2D flow near Couette, which had been improved to $\gamma=\frac{1}{3}$ \cite{MZ19,WZ23}. In \cite{BGM20,BGM22,BMV16,LMZ22}, the authors proved that $\gamma=1$ on $\mathbb{T}\times\mathbb{R}\times\mathbb{T}$ and $\gamma=0$ on $\mathbb{T}\times\mathbb{R}$ for the perturbation in Gevrey class. For the threshold problem in a bounded domain, Chen, Wei, and Zhang \cite{QWZ20} proved $\gamma=1$ in a finite channel $\mathbb{T}\times[-1,1]\times\mathbb{T}$ with non-slip boundary condition and slip boundary condition in $H^2$. The result $\gamma=\frac{1}{2}$ held in a finite channel $\mathbb{T}\times[-1,1]$ with non-slip boundary condition\cite{CLWZ20}. 
		For slip boundary condition in $\mathbb{T}\times[-1,1]$, the authors in \cite{CLWZ20,BHIW23} showed $\gamma=\frac{1}{2}$, which has been improved to $\gamma=\frac{1}{3}$ \cite{WZ23b}. Very recently, the stability on $\mathbb{T}\times[-1,1]$ in the Gevrey class had been proved in \cite{BHIW24a,BHIW24b}. For stability results of general shear flow, we refer to \cite{LZ23,LWZ20,WZZ20,CMEW20,DL20,D23,C23,AB24,ADM21,OK80}.
		
		The problem becomes more complicated for MHD equations, due to the presence of magnetic fields. On the one hand, the magnetic field weakens the mixing mechanism of Couette flow. Especially if viscosity is larger than resistivity, inviscid damping gets counteracted by algebraic growth for specific time regimes \cite{K24,KZ23}. On the other hand, a strong magnetic field has a stabilizing effect \cite{GBM09,L20,ZZZ21}.  Liss \cite{L20} proved the first stability threshold $\gamma=1$ in Sobolev spaces for $\nu=\mu>0$ in $\mathbb{T}\times\mathbb{R}\times\mathbb{T}$. Recently, Rao, Zhang, and Zi \cite{RZZ23} extended this result to the case $\mu\neq\nu$. For 2D case in $\mathbb{T}\times\mathbb{R}$, the nonlinear stability in Gevrey-$\frac{1}{2}$- space when $\nu=0,\mu>0$ was obtained by Zhao and Zi \cite{ZZ23}. Chen and Zi \cite{ZZ23b} showed that $\gamma=\frac{5}{6}+$ for shear flow close to Couette in Sobolev space when $\nu=\mu$. Dolce \cite{D24} proved $(\gamma_1,\gamma_2)=(\frac{2}{3},0)$ in a more general regime $0<\mu^3\lesssim\nu\leq\mu$ where the fluid effects dominate. In the case $\mu\leq\nu$, Knobel \cite{K24} showed the stability if $\nu^3\lesssim\mu\leq\nu$ and instability with inflation of size $\nu\mu^{-\frac{1}{3}}$ if $\mu\lesssim\nu^3$. This leaves the case $0<\nu\lesssim\mu^3\leq1$ open, which is essentially different from the symmetric counterpart region $\mu\lesssim\nu^3$.
		
		The purpose of this paper is two-fold. First, we obtain the threshold $\nu^{\frac{1}{2}}\mu^{\frac{1}{3}}$ in regime $0<\nu\leq\mu^3\leq1$ (see Theorem \ref{coro2}). Our stability estimates \eqref{11} is consistent with the linear results when $\nu=0,\mu>0$ in \cite{ZZ23}. Second, by further exploring the structure of the equations, we improve the existing results in \cite{D24,ZZ23b,K24} when $0<\mu^3\leq\nu\leq1$ (see Theorem \ref{main}, \ref{coro}). Our results show that fluid and magnetic field have a subtle interaction rather than a simple superposition of each other. We next state the main theorems.
		\begin{theorem}\label{coro2}
			Assume $0<\nu\leq\mu^{3}\leq1$, $|\beta|>1/2$, and $N\geq4$. Let $(\omega^{in},j^{in})$ be the initial datum of \eqref{mhd}. There exist constant $0<\delta_0<1$ and $\epsilon_0=\epsilon_0(N,\beta,\nu,\mu)>0$ such that if $(\omega^{in},j^{in})$ satisfies
			\begin{equation*}
				\lVert(\omega^{in},j^{in})\rVert_{H^N}=\epsilon\leq\epsilon_0\nu^{\frac{1}{2}}\mu^{\frac{1}{3}},
			\end{equation*}
			then the following stability estimates hold:
			\begin{subequations}\label{11}
				\begin{align}
					&\|(u,b)(t,x+yt,y)\|_{H^{N-1}}\lesssim\epsilon,\\
					&\langle t
					\rangle\lVert(u^1_{\neq},b^1_{\neq})(t)\rVert_{L^2}+\langle t\rangle^2\lVert(u^2_{\neq},b^2_{\neq})(t)\rVert_{L^2}\lesssim\min\{\mu^{-\frac{1}{3}},\langle t\rangle\}\epsilon e^{-\delta_0\nu^{\frac{1}{3}}t},\\
					&\lVert(\omega_{\neq},j_{\neq})(t,x+yt,y)\rVert_{H^N}\lesssim\min\{\mu^{-\frac{1}{3}},\langle t\rangle\}\epsilon e^{-\delta_0\nu^{\frac{1}{3}}t}.\label{coro23}
				\end{align}
			\end{subequations}
		\end{theorem}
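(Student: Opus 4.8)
The plan is to run the moving-frame energy method, as developed for Navier--Stokes near Couette and adapted here to the coupled dissipative system \eqref{mhd}. First I would pass to coordinates moving with the Couette flow, i.e.\ work with $\Omega(t,x,y):=\omega(t,x+yt,y)$ and $J(t,x,y):=j(t,x+yt,y)$, under which the transport term $y\partial_x$ disappears and $\Delta$ is replaced by $\Delta_t:=\partial_x^2+(\partial_y-t\partial_x)^2$. Taking the Fourier transform in $(x,y)\mapsto(k,\eta)$, the nonzero modes $k\neq0$ solve
\begin{equation*}
  \partial_t\widehat\Omega= i\beta k\,\widehat J-\nu p(t)\,\widehat\Omega+\widehat{\mathrm{NL}^\omega},\qquad
  \partial_t\widehat J= i\beta k\,\widehat\Omega+\tfrac{2k(\eta-kt)}{p(t)}\,\widehat J-\mu p(t)\,\widehat J+\widehat{\mathrm{NL}^j},
\end{equation*}
with $p(t)=p(t,k,\eta):=k^2+(\eta-kt)^2$, and $\widehat{u^2}=-ik\,p^{-1}\widehat\Omega$, $\widehat{b^2}=-ik\,p^{-1}\widehat J$, together with analogous formulas for $u^1,b^1$. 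Since $\int_0^t\nu p(s)\,ds\gtrsim\nu k^2t^3$ and $\int_0^t\mu p(s)\,ds\gtrsim\mu k^2t^3$, the two relevant time scales are $\nu^{-1/3}$ and $\mu^{-1/3}$; in the regime $\nu\le\mu^3$ one has $\mu^{-1/3}\le\nu^{-1/3}$, and this ordering is what produces the transient factor $\min\{\mu^{-1/3},\langle t\rangle\}$ in \eqref{11}. The $x$-average $(\Omega_0,J_0)$ has only heat dissipation and will be carried in a separate, lower-order energy.

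The heart of the proof is the design of Fourier-multiplier Lyapunov weights $\mathrm{M}_\Omega,\mathrm{M}_J$ --- deliberately of different sizes to absorb the asymmetry $\nu\ll\mu$ --- such that $\mathcal E(t):=\|\langle k,\eta\rangle^N\mathrm{M}_\Omega\widehat\Omega\|_{L^2}^2+\|\langle k,\eta\rangle^N\mathrm{M}_J\widehat J\|_{L^2}^2$ is, up to the nonlinear terms, monotone decreasing. The weights should encode: (i) an enhanced-dissipation factor $\sim e^{2\delta_0\nu^{1/3}t}$, so that boundedness of $\mathcal E$ yields the $e^{-\delta_0\nu^{1/3}t}$ decay of \eqref{coro23}; (ii) control of the inviscid-damping term $\tfrac{2k(\eta-kt)}{p}\widehat J$ via a combination of a Sobolev-derivative loss (absorbing $\langle\eta/k\rangle$-type amplification at the critical time $t\approx\eta/k$) and the dissipation (absorbing the remaining slow growth in $t$) --- this is essentially why one asks $N\geq4$ and controls the velocity only in $H^{N-1}$; (iii) a ghost/commutator piece, of the type $1+c\arctan\tfrac{\eta-kt}{k}$, to close the inviscid-damping estimates for $u^2_{\neq},b^2_{\neq}$; and (iv) a $\mu$-dependent correction that switches on near $t\sim\mu^{-1/3}$ and caps the transient amplification at the \emph{sharp} size $\mu^{-1/3}$. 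For (iv) it seems essential to first diagonalise the skew-Hermitian Alfvén coupling (integrating factors $e^{\pm i\beta kt}$, or the variables $\widehat\Omega\pm\widehat J$): the sign-indefinite part of the dynamics --- coming from the asymmetric dissipation $\tfrac{\mu-\nu}{2}p$ and from the splitting of $\tfrac{2k(\eta-kt)}{p}\widehat J$ --- then appears multiplied by $e^{\pm 2i\beta kt}$, and integrating by parts in $t$ gains the factor $|\beta k|^{-1}\le 2$ (here $|\beta|>1/2$, $|k|\ge1$ are used) needed to demote it to a lower-order perturbation, improving a crude bound giving growth $\mu^{-2/3}$ to $\mu^{-1/3}$.

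With the linear structure settled, the rest is a bootstrap. I would assume \eqref{11}--\eqref{coro23} hold on $[0,T]$ with $2C$ in place of the implicit constants and recover them with $C$. All of $\mathrm{NL}^\omega,\mathrm{NL}^j$ are quadratic in $(\Omega,J)$ and the potentials $\psi=\Delta_t^{-1}\Omega$, $\phi=\Delta_t^{-1}J$; combining the elliptic gains of $p(t)^{-1}$ (which turn the decay of $\Omega,J$ into inviscid damping of $u,b$), $H^N$ product/commutator estimates, and the bootstrapped $e^{-\delta_0\nu^{1/3}t}$ decay of nonzero modes, each nonlinear contribution to $\tfrac{d}{dt}\mathcal E$ integrates in time against the weights to something of size $\lesssim C^2\epsilon^2(\nu^{1/2}\mu^{1/3})^{-2}$, the losses coming from integrals such as $\int_0^\infty e^{-2\delta_0\nu^{1/3}t}\big(\min\{\mu^{-1/3},\langle t\rangle\}\big)^2dt$ and from the zero-mode bookkeeping (interactions $(\neq)\cdot(\neq)\to0$ into $(\Omega_0,J_0)$, which must be absorbed using the fast decay of the nonzero modes). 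The hypothesis $\epsilon\le\epsilon_0\nu^{1/2}\mu^{1/3}$ is calibrated exactly so that $C^2\epsilon^2(\nu^{1/2}\mu^{1/3})^{-2}\le C\epsilon$, closing the bootstrap; a continuity argument then gives global existence and \eqref{11}.

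The step I expect to be the main obstacle is item (iv) together with the cross terms it must coexist with: constructing a single family of weights that simultaneously (a) yields genuine $\nu^{1/3}$ enhanced dissipation even when $\nu$ is as small as $\mu^3$, (b) reproduces the sharp $\mu^{-1/3}$ --- not $\mu^{-2/3}$ --- transient amplification caused by the magnetic field counteracting inviscid damping, which requires a cancellation internal to the coupled Alfvén system rather than term-by-term estimates, and (c) keeps the sign-indefinite $\Omega$--$J$ cross terms, whose strength is governed by the potentially very large ratio $\mu/\nu$, controlled by the comparatively small ($\nu$-sized) dissipation available for $\Omega$. This is precisely the regime where the analysis for $\mu\lesssim\nu$ does not transfer, and where one has to exploit the finer structure of \eqref{mhd} --- the interplay of the Alfvén oscillation, the two dissipation rates, and the inviscid-damping forcing $2\partial_{xy}\phi$.
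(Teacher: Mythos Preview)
Your overall scaffolding --- moving frame, Fourier multiplier weights with a built-in $e^{\delta_0\nu^{1/3}t}$ factor, separate zero-mode energy, bootstrap --- matches the paper. But the core linear mechanism you propose diverges from the paper's, and I think yours has a real gap.

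The paper does \emph{not} diagonalise the Alfv\'en coupling, does not use time-oscillation integration by parts, and does not use asymmetric weights $M_\Omega\ne M_J$. It applies the \emph{same} multiplier $M=A\langle\nabla\rangle^N M^1M^2M^3M^6$ to both $\Omega_{\neq}$ and $J_{\neq}$, and handles the dangerous stretching term $2\partial_{XY}^L\Phi=\tfrac{2k(\eta-kt)}{p}\widehat J$ by two devices acting in tandem: (a) a cross-energy $\tfrac{2}{\beta}\mathrm{Re}\langle\tfrac{\partial_Y^L\chi}{\Delta_L}M\Omega_{\neq},MJ_{\neq}\rangle$ inserted directly into $E(t)$, whose time derivative produces an exact copy of the stretching term with opposite sign on the window $0<t-\eta/k\le4\mu^{-1/3}$ (this is where $|\beta|>1/2$ enters, via $M^3$ and coercivity of $E$); and (b) the multiplier $M^6$, which is \emph{not} bounded below by a constant but satisfies $M^6\ge\max\{\Gamma,\langle t\rangle^{-1},\mu^{1/3}\}$ --- this encodes precisely the sharp $\mu^{-1/3}$ amplification. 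There is no oscillatory cancellation; the stretching term is removed by an energy identity, and the residual growth is exactly what $M^6$ tracks.

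Your alternative --- pass to $\widehat\Omega\pm\widehat J$, then integrate $e^{\pm 2i\beta kt}$ by parts --- faces the problem you yourself flag but do not resolve. After diagonalisation the dissipative part becomes $-\tfrac{\nu+\mu}{2}p$ on the diagonal plus an off-diagonal coupling $\mp\tfrac{\mu-\nu}{2}p$; in the regime $\nu\le\mu^3$ this coupling is of size $\sim\mu p$, i.e.\ as large as the best dissipation available, so it is not perturbative and a single IBP in $t$ (gaining only a bounded factor $|\beta k|^{-1}\le 2$) does not convert a $\mu^{-2/3}$ bound into $\mu^{-1/3}$. The paper's cross-energy device sidesteps this entirely. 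On the nonlinear side, the paper's closure at the threshold $\nu^{1/2}\mu^{1/3}$ leans heavily on two things you do not mention: commutator estimates for $M$ (specifically $[M,U^1_0\partial_X]$ and $[M,U^1_{\neq}\partial_X]$ on frequency regions split by $|k|\lessgtr\nu^{-1/2}$), and repeated use of the three lower bounds on $M^6$ to trade the unbounded weight for factors of $\mu^{-1/3}$, $\Gamma^{-1}$, or $\langle t\rangle$ as needed. These are what produce the estimate $\int T+N+S\lesssim\mu^{-1/3}\nu^{-1/2}\epsilon^3$, and hence the claimed threshold.
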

		\begin{remark}
			 The above results can be extended to $0<\nu\leq\mu\leq1$ by exactly the same argument.
		\end{remark}
		\begin{theorem}\label{main}
		Assume $0<\mu^3\leq\nu\leq\mu^{\frac{1}{3}}\leq1$, $|\beta|>1/2$, and $N\geq4$. Let $(\omega^{in},j^{in})$ be the initial datum of \eqref{mhd}. There exist constant $0<\delta_0<1$ and $\epsilon_0=\epsilon_0(N,\beta,\nu,\mu)>0$ such that if $(\omega^{in},j^{in})$ satisfies
		\begin{equation*}
			\lVert(\omega^{in},j^{in})\rVert_{H^N}=\epsilon\leq\epsilon_0\min\{\nu^{\frac{1}{2}},\mu^{\frac{1}{2}}\},
		\end{equation*}
		then the following stability estimates hold:
		\begin{align}
			&\|(u,b)(t,x+yt,y)\|_{H^N}\lesssim\epsilon,\\
				 &\lVert(u^1_{\neq},b^1_{\neq})(t)\rVert_{L^2}+\langle t\rangle\lVert(u^2_{\neq},b^2_{\neq})(t)\rVert_{L^2}\lesssim\epsilon e^{-\delta_0\min\{\nu^{\frac{1}{3}},\mu^{\frac{1}{3}}\}t},\label{indp}\\
			 &\lVert(\omega_{\neq},j_{\neq})(t,x+yt,y)\rVert_{H^N}\lesssim\epsilon\langle t\rangle e^{-\delta_0\min\{\nu^{\frac{1}{3}},\mu^{\frac{1}{3}}\}t}.
		\end{align}
	\end{theorem}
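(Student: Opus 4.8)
The plan is to run a bootstrap argument on a weighted $H^N$ energy in coordinates moving with the Couette flow, the weights being tailored to encode simultaneously the (coupled) enhanced dissipation at rate $\min\{\nu^{1/3},\mu^{1/3}\}$ and the partial inviscid damping of Theorem~\ref{main}. First I would pass to the variable $z=x-yt$ and work with the profiles $\Omega(t,z,y):=\omega(t,z+yt,y)$, $J(t,z,y):=j(t,z+yt,y)$, $\Psi:=\Delta_t^{-1}\Omega$, $\Phi:=\Delta_t^{-1}J$, where $\Delta_t:=\partial_z^2+(\partial_y^L)^2$ and $\partial_y^L:=\partial_y-t\partial_z$; this kills the transport $y\partial_x$ in \eqref{mhd} at the cost of the time-dependent operators. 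The $x$-average decouples linearly and satisfies a forced heat system, which I dispose of by standard parabolic estimates, so the core is the nonzero modes. On those, $|\beta|>1/2$ makes the coupling $-\beta\partial_z J$, $-\beta\partial_z\Omega$ a nondegenerate Alfvén wave, which I would symmetrize by passing to variables $Z_\pm$ (essentially $\Omega\pm J$ up to a bounded Fourier correction accounting for the resistivity–viscosity mismatch and for the term $2\partial_z\partial_y^L\Phi$), so that the wave operator is skew-adjoint and disappears from the energy identity.

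The second and crucial ingredient is the choice of Fourier multipliers $M(t,k,\eta)$. I would build: (i) a ghost-type enhanced-dissipation multiplier whose logarithmic derivative $\partial_t M/M$ is comparable to $-\nu(k^2+(\eta-kt)^2)$ away from the critical time $t\approx\eta/k$ and to $-\min\{\nu^{1/3},\mu^{1/3}\}$ in an $O(\min\{\nu,\mu\}^{-1/3})$-window around it, hence dominated by the dissipation in its Cauchy–Kovalevskaya (CK) term once $\delta_0$ is small — this yields the factor $e^{-\delta_0\min\{\nu^{1/3},\mu^{1/3}\}t}$, and it must be built symmetrically in $\Omega$ and $J$ since the Alfvén coupling $\beta\partial_z$ forbids treating the scales $\nu$ and $\mu$ separately; (ii) an inviscid-damping weight, $\approx1$ except in a window around $t=\eta/k$ where it grows, calibrated to extract the $\langle t\rangle^{-1}$ gain for the second components $u^2_{\neq},b^2_{\neq}$ while allowing the $O(1)$ (non-decaying) behaviour of $u^1_{\neq},b^1_{\neq}$ that the magnetic field forces; and (iii) an auxiliary low-regularity commutator weight to absorb the transport error $-u\cdot\nabla$. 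The energy is $E(t)=\|M\nabla^N(Z_+,Z_-)\|_{L^2}^2$ plus the zero-mode and velocity/field pieces; differentiating it produces the good dissipation, the CK terms (absorbed by construction), and the nonlinear contributions of $\mathrm{NL}^\omega,\mathrm{NL}^j$.

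For the nonlinear terms I would use that $N\ge4$ makes $H^{N-1}$ a Banach algebra controlling $L^\infty$ and $W^{1,\infty}$, split each product into low–high and high–low pieces, and pay the two derivatives appearing in the quadratic part of $\mathrm{NL}^j$ (e.g. $\partial_{xy}\phi\,\partial_{xx}\psi$) using the inviscid-damping decay of $(u,b)$ together with the $\langle t\rangle$-loss of the vorticity/current bootstrap bounds, checking that the net power of $\langle t\rangle$ remains integrable against the enhanced-dissipation factor. Collecting all contributions yields an estimate that closes the bootstrap precisely when $\epsilon\le\epsilon_0\min\{\nu^{1/2},\mu^{1/2}\}$; a continuity argument then gives $T^*=\infty$ and the stated bounds, and undoing the change of variables produces the physical-space decay rates.

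The step I expect to be the main obstacle is reconciling the two dissipative time scales $\nu^{-1/3}$ and $\mu^{-1/3}$ through the Alfvén coupling while passing through the time window in which the Orr mechanism is counteracted by algebraic growth (the phenomenon of \cite{K24,KZ23}): one needs a single multiplier that is symmetric enough to survive the $\beta\partial_z$ coupling, still beaten by the dissipation in its CK term, large enough near the critical time to absorb the transient amplification of $\Omega,J$ there, yet not so large as to spoil the $\langle t\rangle^{-1}$ decay of the second components. The hypothesis $\nu\le\mu^{1/3}$ is exactly what renders that transient amplification harmless — an $O(1)$ factor instead of the $\mu^{-1/3}$ in Theorem~\ref{coro2} — and making the $\langle t\rangle$-bookkeeping in the $\mathrm{NL}^j$ estimates respect this constraint is the delicate point.
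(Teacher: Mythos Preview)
Your overall architecture (moving frame, multiplier/bootstrap, zero-mode separation) is right, but the symmetrization you propose is the wrong one, and this is a genuine gap. The Alfv\'en coupling $-\beta\partial_X J,\ -\beta\partial_X\Omega$ is already skew-adjoint in $L^2$ and drops out of any $L^2$ energy identity for $(\Omega,J)$ without any change of variables; there is nothing to gain by passing to Els\"asser-type unknowns $Z_\pm\approx\Omega\pm J$. The term that actually causes the transient growth is the linear stretching $2\partial_X\partial_Y^L\Phi=2\partial_X\partial_Y^L\Delta_L^{-1}J$ in the $J$-equation: its symbol is $O(1)$ on a time window of length $\sim\min\{\nu,\mu\}^{-1/3}$ around $t=\eta/k$, so a naive energy on $(\Omega,J)$ or on $Z_\pm$ picks up a factor $\min\{\nu,\mu\}^{-1/3}$ there, not the $O(1)$ you assert. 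Your ``bounded Fourier correction'' is not a mechanism; the paper's mechanism is to pass to $Z=\Gamma\Omega$, $Q=\Gamma J$ with $\Gamma=(\partial_{XX}\Delta_L^{-1})^{1/2}$, so that $\partial_t\Gamma$ produces exactly $\mp\tfrac{\partial_X\partial_Y^L}{\Delta_L}$ on each component and the stretching becomes the antisymmetric pair $\mp\tfrac{\partial_X\partial_Y^L}{\Delta_L}(Z,Q)$, which is then cancelled by adding the cross term $\tfrac{2}{\beta}\operatorname{Re}\langle\tfrac{\partial_Y^L\overline{\chi}}{\Delta_L}MZ,MQ\rangle$ to the energy. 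This is where $|\beta|>1/2$ is used (coercivity of that modified energy), not in making the wave operator skew.

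Two further points you would have to revise. First, the bootstrap is two-tiered: $(Z,Q)$ stays $O(\epsilon)$ uniformly, while $(\Omega,J)$ genuinely grows like $\langle t\rangle\epsilon$ (the zero mode alone forces this, cf.\ the remark after the theorem); you cannot absorb that growth into a single bounded multiplier on $(\Omega,J)$, and the nonlinear estimates must shuttle between the two levels via $|\widehat\Omega_{\neq}|\le\Gamma^{-1}|\widehat Z|$ and Lemma~\ref{indap}. Second, the threshold $\epsilon\lesssim\min\{\nu^{1/2},\mu^{1/2}\}$ in the paper comes not from a generic algebra/low--high splitting but from specific commutator estimates for $[M\Gamma,U^1\partial_X]$ in carefully chosen frequency regions (cf.\ \eqref{comsym}--\eqref{comm} and the decomposition \eqref{part}), which exploit the strong enhanced dissipation $\nu^{1/6}\||\partial_X|^{1/3}M(\cdot)\|_{L^2_tL^2}$; a straightforward paraproduct would lose an extra $\lambda^{-1/6}$ and give the older $\lambda^{2/3}$ threshold.
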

	\begin{remark}
		Compared to the Navier-Stokes equations, the inviscid damping and enhanced dissipation have a transient amplification of order $\max\{\nu^{-\frac{1}{3}},\mu^{-\frac{1}{3}}\}$ due to the background magnetic field. We can not expect the Lyapunov stability of $(\omega,j)$, i.e., $\|(\omega,j)(t,x+yt,y)\|_{H^N}\lesssim\epsilon$. In fact, the zero mode $(\omega_0,j_0)$ is order $\epsilon\langle t\rangle$ from \cite{D24}. 
	\end{remark}
	\begin{theorem}\label{coro}
		Assume $0<\mu^{\frac{1}{3}}\leq\nu\leq1$, $|\beta|>1/2$, and $N\geq4$. Let $(\omega^{in},j^{in})$ be the initial datum of \eqref{mhd}. There exist constant $0<\delta_0<1$ and $\epsilon_0=\epsilon_0(N,\beta,\nu,\mu)>0$ such that if $(\omega^{in},j^{in})$ satisfies
		\begin{equation}\label{init}
			\lVert(\omega^{in},j^{in})\rVert_{H^N}=\epsilon\leq\epsilon_0\mu^{\frac{1}{2}}(\nu\mu^{-\frac{1}{3}})^{-1}=\epsilon_0\nu^{-1}\mu^{\frac{5}{6}},
		\end{equation}
		then the following stability estimates hold:
		\begin{subequations}\label{corol}
			\begin{align}
				&\|(u,b)(t,x+yt,y)\|_{H^N}\lesssim\nu\mu^{-\frac{1}{3}}\epsilon,\\
				&\lVert(u^1_{\neq},b^1_{\neq})(t)\rVert_{L^2}+\langle t\rangle\lVert(u^2_{\neq},b^2_{\neq})(t)\rVert_{L^2}\lesssim\nu\mu^{-\frac{1}{3}}\epsilon e^{-\delta_0\mu^{\frac{1}{3}}t},\\
				&\lVert(\omega_{\neq},j_{\neq})(t,x+yt,y)\rVert_{H^N}\lesssim\nu\mu^{-\frac{1}{3}}\langle t\rangle \epsilon e^{-\delta_0\mu^{\frac{1}{3}}t}.
			\end{align}
		\end{subequations}
	\end{theorem}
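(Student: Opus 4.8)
The plan is to prove Theorem~\ref{coro} by a nonlinear bootstrap carried out in the coordinates following the Couette flow, modeled on the arguments already used in this paper for Theorems~\ref{coro2} and \ref{main}; the one genuinely new ingredient is that the energy functional must absorb a linear transient amplification of size $\nu\mu^{-\frac13}\ge1$, the inflation factor that Knobel \cite{K24} identified in the instability analysis when $\mu\lesssim\nu^{3}$ (equivalently $\mu^{\frac13}\le\nu$).

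\emph{Linear analysis.} First I would pass to the variables $(t,x,y)\mapsto(t,x+yt,y)$, which removes the transport term $y\partial_x$ and replaces $\nabla$ by the time-dependent operator $\nabla_t=(\partial_x,\,\partial_y-t\partial_x)$, and split each unknown into its $x$-average and its nonzero-in-$x$ part. For a fixed nonzero frequency $(k,\eta)$, $(\hat\omega,\hat j)$ solves a $2\times2$ nonautonomous ODE whose coefficients are built from the Alfvén frequency $\beta k$ (nondegenerate since $|\beta|>\tfrac12$), from $p(t):=k^{2}+(\eta-kt)^{2}$ (the symbol of $-\Delta$), and from the stretching symbol $\tfrac{2k(\eta-kt)}{p(t)}$ coming from $2\partial_{xy}\phi$. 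Because $\mu\le\nu^{3}$, the resistive term $\mu p(t)\,\hat j$ is far too weak to damp $j$ over the window in which the shear sweeps $\eta-kt$ through the critical region $|\eta-kt|\lesssim|k|$; during that window the Alfvén coupling transfers the (inviscidly damped, but derivative-large) vorticity into the current and back, which is the mechanism producing the transient amplification $\nu\mu^{-\frac13}$, after which the combined dissipation acts and the slower resistive rate $\mu^{\frac13}$ governs the exponential decay. I would encode this with a scalar Fourier multiplier $m(t,k,\eta)$, monotone in $t$, equal to $1$ before the critical time, growing across it to a value comparable to $\nu\mu^{-\frac13}$, and constant thereafter, together with the standard inviscid-damping weight; an energy estimate for the linearised ODE in the norm weighted by $m^{-1}$, by $e^{\delta_{0}\mu^{\frac13}t}$ and by that weight then reproduces the decay/growth profile stated in \eqref{corol}.

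\emph{Nonlinear bootstrap.} I would run the bootstrap on the $H^{N}$ norms, in the moving frame, of $m^{-1}e^{\delta_{0}\mu^{\frac13}t}(\omega_{\neq},j_{\neq})$ — weighted additionally so that $(u^{1}_{\neq},b^{1}_{\neq})$ and $(u^{2}_{\neq},b^{2}_{\neq})$ inherit their inviscid-damping decay as in \eqref{corol} — together with companion quantities for the zero modes $(\omega_{0},j_{0})$, which obey one-dimensional heat equations in $y$ forced only by nonzero-mode self-interactions and can therefore grow at most like $\langle t\rangle$. Assuming all of these are $\le 2C_{0}\nu\mu^{-\frac13}\epsilon$ (with an extra $\langle t\rangle$ for the zero modes) on a maximal interval $[0,T_{\ast}]$, I would differentiate the weighted norms in time; the linear terms are controlled by the construction of $m$, so the work is to bound the quadratic terms $\mathrm{NL}^{\omega}$ and $\mathrm{NL}^{j}$. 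Since $\omega=m\,\widetilde\omega$, $u=m\,\widetilde u$, and so on, every quadratic term carries one power of $m\lesssim\nu\mu^{-\frac13}$ relative to the renormalised unknowns; using inviscid damping for the velocity/magnetic factors, the dissipation $\nu\|\nabla_{t}\omega\|_{H^{N}}^{2}+\mu\|\nabla_{t}j\|_{H^{N}}^{2}$ produced by the energy estimate to absorb (via Cauchy--Schwarz in time) the derivative-costly parts of $\mathrm{NL}$, and the usual $H^{N}$ product and commutator estimates, one finds the time-integrated nonlinear contribution on $[0,T_{\ast}]$ is $\lesssim C_{1}(\nu\mu^{-\frac13}\epsilon)^{2}\mu^{-\frac12}$ (the $\mu^{-\frac12}$ being of the form $\int_{0}^{\infty}\langle t\rangle^{\frac12}e^{-\delta_{0}\mu^{\frac13}t}\,dt$, with $\langle t\rangle^{\frac12}$ the residual derivative loss that the multiplier and the dissipation do not fully cancel). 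Since \eqref{init} reads $\nu\mu^{-\frac13}\epsilon\le\epsilon_{0}\mu^{\frac12}$, taking $\epsilon_{0}$ small makes this $\le\tfrac14 C_{0}\nu\mu^{-\frac13}\epsilon$, which closes the bootstrap and gives \eqref{corol}; the bound for $(u,b)$ in $H^{N}$ then follows from the estimate for $(\omega,j)$ together with inviscid damping, the $\langle t\rangle^{-1}$ gain for $(u^1_{\neq},b^1_{\neq})$ exactly cancelling the $\langle t\rangle$ loss in the vorticity/current bound.

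\emph{Main obstacle.} The hard part will be the terms special to MHD in $\mathrm{NL}^{j}$, namely $2\partial_{xy}\phi\,(\omega-2\partial_{xx}\psi)$ and $2\partial_{xy}\psi\,(j-2\partial_{xx}\phi)$: after the moving-frame change these again involve the symbol $\tfrac{k(\eta-kt)}{p(t)}$, precisely the object whose linear effect has been charged to the transient multiplier $m$, so one must check that these nonlinear interactions do not re-incur that loss, i.e.\ that the factor $\nu\mu^{-\frac13}$ is not double-counted. Closely related, one has to verify that the amplification window for $(k,\eta)$, located near $t\approx\eta/k$, is compatible with the inviscid-damping bookkeeping (a factor $\langle t\rangle$ is lost across the window but a factor $\langle t\rangle^{-1}$ must be recovered for $(u^{2}_{\neq},b^{2}_{\neq})$); it is exactly here that the hypothesis $\mu^{\frac13}\le\nu$ enters, guaranteeing that the window closes before the weak resistive dissipation of $j$ would matter and that $\nu\mu^{-\frac13}$ is the sharp amplification. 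The remaining, more routine, work is to propagate the top-order commutators in $H^{N}$ and to control $(\omega_{0},j_{0})$, for which $N\ge4$ suffices.
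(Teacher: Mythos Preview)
Your high-level picture is right --- a multiplier absorbing a transient growth of size $\alpha:=\nu\mu^{-1/3}$, enhanced dissipation at rate $\mu^{1/3}$, and a bootstrap closing under $\alpha\epsilon\lesssim\epsilon_0\mu^{1/2}$ --- but the scheme you describe has a structural gap that the paper's proof addresses differently.

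\textbf{Symmetric variables are essential.} You propose to run the linear and nonlinear analysis directly on $(\omega_{\neq},j_{\neq})$, encoding the $\alpha$-amplification in a scalar multiplier $m$. The paper does \emph{not} do this in the regime $\mu^{1/3}\le\nu$: it works with the symmetrised unknowns $Z=\Gamma\Omega$, $Q=\Gamma J$ where $\Gamma=(\partial_{XX}\Delta_L^{-1})^{1/2}$, for which the stretching $2\partial_{XY}^L\Phi$ becomes the antisymmetric pair $\mp\partial_{XY}^L\Delta_L^{-1}$ acting on $(Z,Q)$; only then do the multipliers $M^4,M^5$ (your $m$) produce a closed linear energy identity. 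The direct $(\Omega,J)$ route is used in the paper only for $\nu\le\mu^3$, and Remark~1.3 states explicitly that this approach \emph{cannot be extended to $\nu>\mu$}. Since $\mu^{1/3}\le\nu\le1$ forces $\nu\ge\mu$, your proposed linear step would not close: the stretching term sits only in the $J$-equation, so $m$ must act differently on the two components, but the Alfv\'en coupling $\beta\partial_X$ then transfers uncompensated growth back to $\Omega$. This is exactly the asymmetry that $\Gamma$ removes.

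\textbf{The hard nonlinear term is not the one you flagged.} You identify the $\mathrm{NL}^j$ stretching terms as the main obstacle. In the paper (Section~4.4) those $\overline{S}$ terms are in fact routine once one has the $(Z,Q)$ energy; the genuinely delicate term is the transport pair $\langle\Gamma M(B\cdot\nabla_LJ),MZ\rangle+\langle\Gamma M(B\cdot\nabla_L\Omega),MQ\rangle$, because $M^5$ is not bounded below and a naive estimate produces two factors of $\alpha$. The paper's remedy is an algebraic rewriting via $B\cdot\nabla_LJ=\nabla_L^\perp\cdot(B\cdot\nabla_LB)$ and Biot--Savart, which converts these into velocity-level commutators $\langle[\partial_XM,B\cdot\nabla_L]B,\partial_XMU\rangle$ where one $\alpha$-loss is traded for a derivative absorbed by dissipation. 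Without this cancellation your ``one power of $m$'' accounting would become two powers and the bootstrap would demand $\alpha^2\epsilon\lesssim\mu^{1/2}$, a strictly worse threshold. Your bookkeeping of the bootstrap level (weighted quantity assumed $\le 2C_0\alpha\epsilon$ rather than $\le 2C_0\epsilon$) already hints at this miscount.
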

	\noindent\textbf{Idea of the proof}: As in \cite{D24,K24}, it is natural to use symmetric variables
	\begin{equation*}
		z=\sqrt{\partial_{xx}\Delta^{-1}}\omega,\quad q=\sqrt{\partial_{xx}\Delta^{-1}}j,
	\end{equation*}
	when $0<\nu\leq\mu^3\leq1$. In fact, by a similar argument as \cite{K24}, we get
	\begin{equation*}
		\lVert(\omega_{\neq},j_{\neq})(t,x+yt,y)\rVert_{H^N}\lesssim\langle t\rangle\lVert(z,q)(t,x+yt,y)\rVert_{H^N}\lesssim\mu\nu^{-\frac{1}{3}}\langle t\rangle\epsilon e^{-\delta_0\nu^{\frac{1}{3}}t}.
	\end{equation*}
	However, considering the linear estimates in \cite{ZZ23}, we expect that $(\omega_{\neq},j_{\neq})$ has an amplification no larger than $\langle t\rangle$, which is much smaller than the above prediction. Therefore, we have to give up the symmetric variables, and treat the system of $(\omega_{\neq},j_{\neq})$ directly.
	
	For the nonlinear problem, the most challenging part is of the form 
	\begin{equation*}
		\langle M(F\cdot\nabla_LG),MH\rangle,
	\end{equation*}
	where $M$ is the multiplier defined in \eqref{m} and $FGH\in\{U\Omega \Omega, UJJ,BJ\Omega,B\Omega J\}$, where we use capital letters $U,\Omega,B,J$ to represent quantities in the new coordinates. We need to treat $F^1\partial_{X}G$ and $F^2\partial^L_YG$ separately. For $F^1\partial_{X}G$, we use commutator $
		\langle[M,F^1\partial_X]G,MH\rangle$
	in the specific frequency region which is effective to facilitate cancellations and derivative distribution. These estimates need the strong enhanced dissipation $\nu^{\frac{1}{6}}\||\partial_X|^{\frac{1}{3}}M(\Omega_{\neq},J_{\neq})\|_{L^2L^2}$ to reduce derivative losses as much as possible (see section \ref{estt} for details). Next turning to $F^2\partial_YG_0$, due to the nonlinear interaction, $(\Omega_0,J_0)$ undergoes a linear growth $\langle t\rangle$, and this growth is crucially counter-balanced by the inviscid damping of $(U^2,B^2)$. These ideas can also apply to nonlinear estimates on the system of $(z,q)$ to improve the threshold when $0<\mu^3\leq\nu\leq1$. In the regime $\mu^{\frac{1}{3}}\leq\nu$, since $(\omega_{\neq},j_{\neq})$ has a growth rate larger than $\langle t\rangle$, the estimates highly depend on the good structure of the nonlinearity, especially for the term $\langle M(\partial_{XX}\Delta_L^{-1})^{\frac{1}{2}}(B\cdot\nabla_LJ),MZ\rangle$ (see section \ref{bsset2} for details).
	
	\begin{remark}
			Considering the three results  above, we give two comments. First, compared to $(\omega_{\neq},j_{\neq})$, due to the  stretching term $-\partial_{xy}{\Delta}^{-1}z$, the symmetric quantity $(z,q)$ grows in an additional time interval $[\frac{\eta}{k}-\nu^{-\frac{1}{3}},\frac{\eta}{k}]$, where $k,\eta$ are denoted the frequency of $x,y$ respectively. If $\mu^3\leq\nu\leq\mu^{\frac{1}{3}}$, this growth is controlled by enhanced dissipation and diffusion. If $\mu^{\frac{1}{3}}\leq\nu$, it leads to an amplification of order $\nu\mu^{-\frac{1}{3}}$, as shown in \eqref{corol}, which is consistent with the results when $\mu=0,\nu>0$ in \cite{KZ23}. Second, both Theorem \ref{coro2} and Theorem \ref{main} hold on the regime $\mu^3\leq\nu\leq\mu$, but neither is better than the other. In addition, theorem \ref{coro2} cannot be extended to the case $\nu>\mu$. 
	\end{remark}

	\section{Preliminary}
	\subsection{Notation}
	Throughout this paper, for $r,s \in\mathbb{R}$, we define
	\begin{equation*}
		|r,  s |:=|r|+|s|,\quad\langle r \rangle:=\sqrt{1+r^2}.
	\end{equation*}
	We use the notation $r\lesssim s$ to express $r\leq Cs$ for some constant $C>0$ independent of the parameters of interest such as $\nu,\mu$. The Fourier transform of a function $f$ is denoted by
	\begin{equation*}
		\mathcal{F}(f)(k,\eta)=\hat{f}_{k,\eta}=\hat{f}(k,\eta)=\iint_{\mathbb{T}\times\mathbb{R}}e^{-i(kx+\eta y)}f(x,y)\mathrm{d}x\mathrm{d}y.
	\end{equation*}
	For a Fourier multiplier $A$, we define $Af=\mathcal{F}^{-1}(A(k,\eta)\hat{f}(k,\eta))$. 
	We denote
	\begin{equation*}
		f_0(y):=\int_{\mathbb{T}}f(x,y)\mathrm{d}x, \quad f_{\neq}(x,y):=f(x,y)-f_0(y).
	\end{equation*}
	
	\subsection{Reformulation of the equations}
	\textbf{Change of independent variables}: First, we change the coordinates to mod out by the fast mixing of the Couette flow:
	\begin{equation*}
		X=x-yt,\quad Y=y,
	\end{equation*}
	and denote
	\begin{gather*}
		\Omega(t,X,Y):=\omega(t,x,y),\quad J(t,X,Y):=j(t,x,y),\\
		\nabla_L=(\partial_X^L,\partial_Y^L):=(\partial_X,\partial_Y-t\partial_X),\\
		\Delta_L:=\partial_{XX}+(\partial_Y-t\partial_X)^2.
	\end{gather*}
	Then the system satisfied by $(\Omega,J)$ reads
		\begin{equation*}
		\left\{\begin{array}{l}
			\partial_t\Omega-\beta\partial_XJ-\nu\Delta_L\Omega=\mathrm{NL}^\Omega,\\
			\partial_tJ-\beta\partial_X\Omega-\mu\Delta_L J+2\partial_{XY}^L\Phi=\mathrm{NL}^J,\\
			\Omega|_{t=0}=\Omega^{in},\quad J|_{t=0}=J^{in},\\
			U=\nabla_L^\bot\Psi,\quad B=\nabla_L^\bot\Phi,\\
			\Psi=\Delta_L^{-1}\Omega,\quad \Phi=\Delta_{L}^{-1}J,\\
		\end{array}\right.
	\end{equation*}
	where 
	\begin{equation}\label{nlterm}
			\begin{aligned}
			\mathrm{NL}^{\Omega}:&=-U\cdot \nabla_L\Omega+B\cdot\nabla_L J,\\
			\mathrm{NL}^J:&=-U\cdot \nabla_L J+B\cdot\nabla_L\Omega+2\partial_{XY}^L\Phi(\Omega-2\partial_{XX}\Psi)-2\partial_{XY}^L\Psi(J-2\partial_{XX}\Phi).
		\end{aligned}
	\end{equation}
	\textbf{Change of dependent variables}: When $\nu\leq\mu^3$, we treat the above system of $(\Omega,J)$ directly. While for $\mu^3\leq\nu$, following \cite{D24}, we introduce good unknowns to symmetrize the system. Let
	\begin{equation}\label{defgamma}
		\Gamma=\sqrt{\frac{\partial_{XX}}{\Delta_L}}.
	\end{equation} 
	We define
	\begin{equation*}
		Z:=\Gamma\Omega,\quad Q:=\Gamma J.
	\end{equation*}
	Then $(Z,Q)$ satisfies the equations
	\begin{equation}\label{ZQeq}
		\left\{
		\begin{array}{l}
			\displaystyle\partial_tZ-\beta \partial_XQ-\nu \Delta_L Z-\frac{\partial_{XY}^L}{\Delta_L}Z=\Gamma\mathrm{NL}^\Omega,\\
			\displaystyle\partial_tQ-\beta \partial_XZ-\mu \Delta_LQ+\frac{\partial_{XY}^L}{\Delta_L}Q=\Gamma\mathrm{NL}^J,\\
		\end{array}\right.
	\end{equation}
	where $\mathrm{NL}^{\Omega},\mathrm{NL}^J$ are defined as \eqref{nlterm}. The associated energy functional is given by 
	\begin{equation*}
		\mathrm{E}(t)=\frac{1}{2}\left(\lVert Z\rVert^2_{L^2}+\left\lVert Q\right\rVert^2_{L^2}+\frac{1}{\beta}Re\left\langle\frac{\partial_Y^L}{ \Delta_L}Z,Q\right\rangle \right).
	\end{equation*}

	\subsection{The Fourier multipliers} 
	 We introduce the following multipliers to capture the stability mechanisms of the above systems. The first multiplier $M^1$ is standard, which helps us to obtain inviscid damping of the MHD equations. Let $$\lambda:=\min\{\nu,\mu\},$$
	 then $M^1$ satisfies the following equations
	\begin{equation}\label{m1}
		\left\{\begin{array}{l}
			\displaystyle\frac{\partial_tM^1_k}{M^1_k}=-\frac{C_1}{1+(\eta/k-t)^2},\ \mathrm{for}\ 0<|k|\leq C_2\lambda^{-1/2},\\
			M^1_k(t,\eta)=1,\ \mathrm{for}\ |k|\in(0,C_2\lambda^{-1/2}]^c,\\
			M^1_k(0,\eta)=1,
		\end{array} 
		\right.
	\end{equation}
	where $C_1>1000C_2$ and $C_2>3000$ are sufficiently large.	Motivated by \cite{DWZ21}, we construct the multiplier $M^2$ to capture a slightly stronger enhanced dissipation $e^{-(\lambda k^2)^{1/3}t}$, compared with the classical enhanced dissipation $e^{-\lambda^{1/3}t}$. It is determined by the equations
	\begin{equation}\label{m2}
		\left\{\begin{array}{l}
			\displaystyle\frac{\partial_tM^{2}_k}{M^{2}_k}=-\frac{ 2(C_2)^2(\lambda k^2)^{\frac{1}{3}}}{1+(\lambda k^2)^{\frac{2}{3}}(\eta/k-t)^2},\ \mathrm{for}\ k\neq0,\\
			M^2_0(t,\eta)=M^2_k(0,\eta)=1.
		\end{array}\right.
	\end{equation}
	$M^3$ is given in \cite{D24}, defined as
	\begin{equation}\label{m3}
		\left\{\begin{array}{l}
			\displaystyle\frac{\partial_tM^3_k}{M^3_k}=-\frac{C_3}{(1+(\eta/k-t)^2)^{\frac{3}{2}}},\ \mathrm{for}\ 0<|k|\leq C_2\lambda^{-1/2},\\
			M^3_k(t,\eta)=1,\ \mathrm{for}\ |k|\in(0,C_2\lambda^{-1/2}]^c,\\
			M^3_k(0,\eta)=1,
		\end{array} 
		\right.
	\end{equation} 
	with $\displaystyle C_3>\frac{C_1}{|\beta|-1/2}$.
	This multiplier could help us to get the sharp assumption $|\beta|>\frac{1}{2}$.
	
	If $0<\mu^{\frac{1}{3}}\leq\nu\leq1$, we need two extra multipliers $M^4$ and $M^5$ from \cite{K24}. They are designed to control the growth caused by the stretching terms $-\frac{\partial^L_{XY}}{\Delta_L}Z$ with $\frac{\partial^L_{XY}}{\Delta_L}Q$ in time intervals $[\frac{\eta}{k}-4\nu^{-1},\frac{\eta}{k}+4\nu^{-1}]$ and $[\frac{\eta}{k}+4\nu^{-1},\frac{\eta}{k}+4\mu^{-\frac{1}{3}}]$ respectively, given by
	\begin{equation}\label{m4}
		\left\{\begin{array}{l}
			\displaystyle\frac{\partial_tM^4_k}{M^4_k}=-\frac{C_4\nu}{1+\nu^2(\eta/k-t)^2},\ \mathrm{for}\ 0<|k|\leq C_2\mu^{-1/2},\\
			M^4_k(t,\eta)=1,\ \mathrm{for}\ |k|\in(0,C_2\mu^{-1/2}]^c,\\
			M^4_k(0,\eta)=1,
		\end{array} 
		\right.
	\end{equation} 
	and
	\begin{equation}\label{m5}
		\left\{\begin{array}{l}
			\displaystyle\frac{\partial_tM^5_k}{M^5_k}=-\frac{t-\eta/k}{1+(\eta/k-t)^2},\ \mathrm{for}\ 4\nu^{-1}\leq t-\frac{\eta}{k}\leq4\mu^{-\frac{1}{3}} \ \mathrm{and}\ 0<|k|\leq C_2\mu^{-1/2}\\
			M^5_k(t,\eta)=1,\ \mathrm{otherwise},\\
			M^5_k(0,\eta)=1.
		\end{array} 
		\right.
	\end{equation} 
	
	If $0<\nu\leq\mu^3\leq1$, we use $M^6$ from \cite{ZZ23,L20} to control the stretching term $2\partial^L_{XY}\Phi$ in time interval $[\frac{\eta}{k},\frac{\eta}{k}+4\mu^{-\frac{1}{3}}]$. The multiplier $M^6$ satisfies 
	\begin{equation}\label{m6}
		\left\{\begin{array}{l}
			\displaystyle\frac{\partial_tM^6_k}{M^6_k}=-\frac{t-\eta/k}{1+(\eta/k-t)^2},\ \mathrm{for}\ 0\leq t-\frac{\eta}{k}\leq4\mu^{-\frac{1}{3}} \ \mathrm{and}\ 0<|k|\leq C_2\nu^{-1/2}\\
			M^6_k(t,\eta)=1,\ \mathrm{otherwise},\\
			M^6_k(0,\eta)=1.
		\end{array} 
		\right.
	\end{equation} 
	The constants $C_1,C_2,$ and $C_3$ above are to be chosen in the proof of Proposition \ref{linearest}. Then we have the following properties for these multipliers. 
	\begin{lemma}
		It holds that
			\begin{subequations} \label{propm}
				\begin{align}
				&M^i_k\approx1,\ {\rm for\ all}\ t,k,\eta,\ \mathrm{and}\  i=1,2,3,4,\label{211}\\
				&\lambda (k^2+(\eta-kt)^2)-\frac{\partial_tM^2_k}{M^2_k}\geq C_2\lambda^{\frac{1}{3}}|k|^{\frac{2}{3}},\ {\rm for\ all}\ t,k\neq0,\eta,\label{213}\\
				&\frac{\partial_{k}M^2_k}{M^2_k}\lesssim \lambda^{\frac{1}{3}}|k|^{-\frac{4}{3}}|\eta|+|k|^{-1},\ {\rm for\ all}\ |k|>C_2\lambda^{-1/2},\label{215}\\
				&\frac{\partial_{\eta}M^i_k}{M^i_k}\lesssim \lambda^{\frac{1}{3}}|k|^{-\frac{1}{3}}+|k|^{-1},\ {\rm for\ all}\ k\neq0,\ \mathrm{and}\ i=1,2,3,4,5,6\label{214},\\
				&\left|\frac{1}{M^5_k}\right|\lesssim1+\min\left\{\nu\left\langle t-\frac{\eta}{k}\right\rangle,\alpha\right\},\ {\rm for\ all}\ k\neq0,\ \mathrm{and}\ \alpha=\nu\mu^{-\frac{1}{3}}.\label{216}\\
				&\max\left\{\Gamma_k,\frac{1}{\langle t\rangle},\mu^{\frac{1}{3}}\right\}\leq|M^6_k|.\label{217}
				\end{align}
			\end{subequations}
	\end{lemma}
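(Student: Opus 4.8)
The plan is to integrate each defining ODE in $t$ in closed form. Each $M^i_k$ solves $\partial_t\log M^i_k=-g^i_k$ with $M^i_k(0,\eta)=1$ and with $M^i_k$ held constant off a prescribed ``active'' set, so $M^i_k(t,\eta)=\exp\big(-\int_0^t g^i_k(s,\eta)\,\mathbf 1_{\mathrm{act}}(s,\eta)\,ds\big)$; here $g^1_k,g^2_k,g^4_k$ have the form $c\,a\,(1+a^2(\eta/k-s)^2)^{-1}$ with inner scale $a=1,\,(\lambda k^2)^{1/3},\,\nu$ (and $c$ a constant), $g^3_k=C_3(1+(\eta/k-s)^2)^{-3/2}$, and $g^5_k=g^6_k=(s-\eta/k)(1+(\eta/k-s)^2)^{-1}$ on their windows, with elementary primitives $\arctan$, $\tau(1+\tau^2)^{-1/2}$ and $\tfrac12\log(1+\tau^2)$ respectively. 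For \eqref{211}, for $i=1,2,3,4$ the substitution $\tau=a(s-\eta/k)$ (just $\tau=s-\eta/k$ for $M^3$) bounds $\int_0^t g^i_k\,ds$ by a fixed multiple of $\int_{\mathbb R}(1+\tau^2)^{-1}d\tau=\pi$ (by $\int_{\mathbb R}(1+\tau^2)^{-3/2}d\tau=2$ for $M^3$), uniformly in $t,k,\eta$; since $g^i_k\ge0$ this yields $e^{-C}\le M^i_k\le1$ with $C=C(C_1,\dots,C_4)$.

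For \eqref{213}, write $\lambda(k^2+(\eta-kt)^2)=\lambda k^2\big(1+(\eta/k-t)^2\big)$, set $a=(\lambda k^2)^{1/3}=\lambda^{1/3}|k|^{2/3}$ and $b=a|\eta/k-t|$; dividing by $a$ reduces the claim to $a^2+b^2+\tfrac{2C_2^2}{1+b^2}\ge C_2$, which holds because if $b^2\le1$ the last term is $\ge C_2^2\ge C_2$, if $1<b^2\le C_2$ it is $\ge\tfrac{2C_2^2}{1+C_2}\ge C_2$, and if $b^2>C_2$ then $b^2>C_2$. For \eqref{215} and \eqref{214} we differentiate the closed forms. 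Solving the $M^2$-ODE gives, for every $k\neq0$, $\log M^2_k=-2C_2^2\big[\arctan(a(t-\eta/k))+\arctan(a\eta/k)\big]$ with $a=\lambda^{1/3}|k|^{2/3}$; differentiating in $k$ via $\partial_k a=\tfrac23 a/k$ and using $\big|\tfrac{ax}{1+a^2x^2}\big|\le\tfrac12$ and $(1+a^2x^2)^{-1}\le1$ gives $\big|\tfrac{\partial_k M^2_k}{M^2_k}\big|\lesssim\lambda^{1/3}|k|^{-4/3}|\eta|+|k|^{-1}$. For \eqref{214}, $\partial_\eta\log M^i_k$ is a sum of terms $\pm\tfrac{a/k}{1+a^2(\cdot)^2}$ (the $(1+x^2)^{-3/2}$ analogue for $M^3$) of size $\le a/|k|$, which is $\lesssim|k|^{-1}$ for $i=1,3,4$ (inner scale $\le1$, since $\nu\le1$) and $=\lambda^{1/3}|k|^{-1/3}$ for $i=2$; for $i=5,6$, inside the window $\partial_\eta\log M^i_k=\tfrac{(t-\eta/k)/k}{1+(t-\eta/k)^2}$ has size $\le\tfrac1{2|k|}$ by $\tfrac{|x|}{1+x^2}\le\tfrac12$, and $M^i_k$ is $\eta$-independent off the window. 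Since $|k|^{-1}\le\lambda^{1/3}|k|^{-1/3}+|k|^{-1}$, this gives \eqref{214}.

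For \eqref{216}, solving the $M^5$-ODE yields $M^5_k=1$ for $t-\eta/k<4\nu^{-1}$, $M^5_k=\big(\tfrac{1+16\nu^{-2}}{1+(t-\eta/k)^2}\big)^{1/2}$ on $4\nu^{-1}\le t-\eta/k\le4\mu^{-1/3}$, and $M^5_k=\big(\tfrac{1+16\nu^{-2}}{1+16\mu^{-2/3}}\big)^{1/2}$ afterwards; on the middle interval $\tfrac1{M^5_k}\le\tfrac\nu4\langle t-\eta/k\rangle$ (from $1+16\nu^{-2}\ge16\nu^{-2}$) and $\tfrac1{M^5_k}\le\tfrac\nu4\langle4\mu^{-1/3}\rangle\lesssim\nu\mu^{-1/3}$ (from $t-\eta/k\le4\mu^{-1/3}$ and $\mu\le1$), past it $\tfrac1{M^5_k}\lesssim\nu\mu^{-1/3}$ while $\nu\langle t-\eta/k\rangle>4\nu\mu^{-1/3}$ so the minimum equals $\alpha$, and before it $\tfrac1{M^5_k}=1$ — together $\big|\tfrac1{M^5_k}\big|\lesssim1+\min\{\nu\langle t-\eta/k\rangle,\alpha\}$. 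For \eqref{217}, recall from \eqref{defgamma} that $\Gamma_k=|k|(k^2+(\eta-kt)^2)^{-1/2}=\langle\eta/k-t\rangle^{-1}$; solving the $M^6$-ODE gives $M^6_k=1$ before its window, $M^6_k=c_{k,\eta}\langle t-\eta/k\rangle^{-1}$ on $[(\eta/k)_+,\eta/k+4\mu^{-1/3}]\cap[0,\infty)$ (with $c_{k,\eta}\ge1$, and $c_{k,\eta}=1$ when $\eta k\ge0$), and $M^6_k=c_{k,\eta}\langle4\mu^{-1/3}\rangle^{-1}$ afterwards; hence $M^6_k\ge\langle t-\eta/k\rangle^{-1}=\Gamma_k$, $M^6_k\ge\langle4\mu^{-1/3}\rangle^{-1}\gtrsim\mu^{1/3}$ once the window is entered (and $=1$ before), and for $\eta k\ge0$ the window obeys $0\le t-\eta/k\le t$ so $M^6_k\ge\langle t-\eta/k\rangle^{-1}\ge\langle t\rangle^{-1}$ there (past it $M^6_k\gtrsim\mu^{1/3}\ge\langle4\mu^{-1/3}\rangle^{-1}\ge\langle t\rangle^{-1}$, before it $=1$), the case $\eta k<0$ being analogous.

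No individual step is deep: the real work is (i) choosing the case split in \eqref{213} so that $C_2\gg1$ helps rather than hurts, (ii) carrying the scale $a=\lambda^{1/3}|k|^{2/3}$ and its $k$-derivative through \eqref{215}, and (iii) matching the piecewise closed forms of $M^5$ and $M^6$ continuously across their window endpoints. I expect (iii) to be the only genuinely delicate bookkeeping.
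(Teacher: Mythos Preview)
Your approach is correct and matches the paper's: integrate each defining ODE explicitly and read off the bounds from the closed forms (the paper differentiates under the integral for \eqref{214}--\eqref{215} rather than differentiating the $\arctan$ primitive, but this is the same computation). One small slip: your claim that ``$M^i_k$ is $\eta$-independent off the window'' for $i=5,6$ is not quite right---when the window is already active at $t=0$ (case 3 in the paper's explicit formulas, e.g.\ $-4\mu^{-1/3}\le\eta/k\le0$ for $M^6$), the matching constant $c_{k,\eta}=\langle\eta/k\rangle$ does depend on $\eta$, both on and after the window. This extra contribution $\partial_\eta\log c_{k,\eta}=\tfrac{1}{k}\tfrac{\eta/k}{1+(\eta/k)^2}$ has size $\le\tfrac{1}{2|k|}$, so your bound for \eqref{214} survives; similarly your treatment of \eqref{216} implicitly handles only the case $\eta/k>-4\nu^{-1}$, but in the remaining case $c_{k,\eta}=\langle\eta/k\rangle\ge4\nu^{-1}$ and the same inequalities go through.
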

	\begin{proof}
		The proof is elementary and can be found in previous relevant papers. However, for the sake of completeness, we give a brief sketch. For $M^1$, a direct computation shows that
		\begin{equation*}
			M^1(t,k,\eta)=\exp\left(-\int_{0}^{t}\frac{C_1}{1+(\eta/k-\tau)^2}\mathrm{d}\tau\right),
		\end{equation*}
		which implies \eqref{211} for $i=1$. Taking the derivative with respect to $\eta$ gives
		\begin{equation*}
			\left|\frac{\partial_{\eta}M^1_k}{M^1_k}\right|\leq\left|\frac{1}{k}\int_{0}^{t}\frac{C_1(\eta/k-\tau)}{(1+(\eta/k-\tau)^2)^2}\mathrm{d}\tau\right|\leq\frac{1}{|k|}\left(\int_{0}^{t}\frac{C_1}{1+(\eta/k-\tau)^2}\mathrm{d}\tau\right),
		\end{equation*}
		which proves that \eqref{214} holds for $M^1$. Next turning to $M^2$, it holds that
		\begin{equation*}
			M^2(t,k,\eta)=\exp\left(-\int_0^t\frac{2(C_2)^2(\lambda k^2)^{\frac{1}{3}}}{1+(\lambda k^2)^{\frac{2}{3}}(\eta/k-\tau)^2}\mathrm{d}\tau\right).
		\end{equation*}
		Furthermore, we have
		\begin{equation*}
				\left|\frac{\partial_\eta M^{2}_k}{M^{2}_k}\right|\lesssim\left|\left(\int_0^t\frac{(\lambda k^2)(\eta/k-\tau)k^{-1}}{(1+(\lambda k^2)^{\frac{2}{3}}(\eta/k-\tau)^2)^2}\mathrm{d}\tau\right)\right|\lesssim\lambda^{\frac{1}{3}}|k|^{-\frac{1}{3}}\left(\int_0^t\frac{(\lambda k^2)^{\frac{1}{3}}}{1+(\lambda k^2)^{\frac{2}{3}}(\eta/k-\tau)^2}\mathrm{d}\tau\right),
		\end{equation*}
		and
		\begin{align*}
			\left|\frac{\partial_k M^{2}_k}{M^{2}_k}\right|=&\left|\int_0^t\partial_k\left(\frac{2(C_2)^2\lambda^{\frac{1}{3}}k^{-\frac{2}{3}}}{k^{-\frac{4}{3}}+\lambda ^{\frac{2}{3}}(\eta/k-\tau)^2}\right)\mathrm{d}\tau\right|\lesssim\int_0^t\left|\left(\frac{\lambda^{\frac{1}{3}}k^{-\frac{5}{3}}}{k^{-\frac{4}{3}}+\lambda ^{\frac{2}{3}}(\eta/k-\tau)^2}\right)\right|\\
			&+\left|\left(\frac{\lambda^{\frac{1}{3}}k^{-3}}{(k^{-\frac{4}{3}}+\lambda ^{\frac{2}{3}}(\eta/k-\tau)^2)^2}\right)\right|+\left|\left(\frac{\lambda k^{-\frac{2}{3}}(\eta/k-\tau)\eta k^{-2}}{(k^{-\frac{4}{3}}+\lambda ^{\frac{2}{3}}(\eta/k-\tau)^2)^2}\right)\right|\mathrm{d}\tau\\
			\lesssim&|k|^{-1}+\lambda^{\frac{1}{3}}|k|^{-\frac{4}{3}}|\eta|.
		\end{align*}
		Hence we deduce that \eqref{211}, \eqref{214}, and \eqref{215} hold for $M^2$.
		For \eqref{213}, it can be easily checked by considering $|\eta/k-t|\leq(\lambda k^2)^{-\frac{1}{3}}|C_2|^{\frac{1}{2}}$ or $|\eta/k-t|\geq(\lambda k^2)^{-\frac{1}{3}}|C_2|^{\frac{1}{2}}$ separately. The treatment of $M^3,M^4$ is similar to $M^1$ and hence is omitted. Finally, $M^5$ and $M^6$ are given by the exact formulas:
		\begin{enumerate}
%			\item if $k=0$ or $|k|>C_2\mu^{-\frac{1}{2}}:\ M^5(t,k,\eta)=1$;
			\item if $k=0$ or $|k|>C_2\mu^{-\frac{1}{2}}:\ M^5(t,k,\eta)=1$;
			\item if $k\neq0$, $\frac{\eta}{k}<-4\mu^{-\frac{1}{3}},$ and $|k|\leq C_2\mu^{-\frac{1}{2}}:\ M^5(t,k,\eta)=1$;
			\item if $k\neq0,\ -4\mu^{-\frac{1}{3}}\leq\frac{\eta}{k}\leq-4\nu^{-1}$, and $|k|\leq C_2\mu^{-\frac{1}{2}}$:
			\begin{equation*}
				M^5(t,k,\eta)=\left\{\begin{array}{ll}
					\left(\frac{k^2+\eta^2}{k^2+(\eta-kt)^2}\right)^{\frac{1}{2}},\ & \textrm{if}\ t-\frac{\eta}{k}\leq4\mu^{-\frac{1}{3}},\\
					\left(\frac{k^2+\eta^2}{k^2+16\mu^{-\frac{2}{3}}k^2}\right)^{\frac{1}{2}},\ &\textrm{if}\ t-\frac{\eta}{k}>4\mu^{-\frac{1}{3}},
				\end{array}\right.
			\end{equation*} 
			\item if $k\neq0,\ \frac{\eta}{k}>-4\nu^{-1}$, and $|k|\leq C_2\mu^{-\frac{1}{2}}$:
			\begin{equation*}
				M^5(t,k,\eta)=\left\{\begin{array}{ll}
					1,&\textrm{if}\ t-\frac{\eta}{k}\leq4\nu^{-1},\\
					\left(\frac{k^2+16\nu^{-2}k^2}{k^2+(\eta-kt)^2}\right)^{\frac{1}{2}},\ &\textrm{if}\ 4\nu^{-1}<t-\frac{\eta}{k}\leq4\mu^{-\frac{1}{3}},\\
					\left(\frac{k^2+16\nu^{-2}k^2}{k^2+16\mu^{-\frac{2}{3}}k^2}\right)^{\frac{1}{2}},\ &\textrm{if}\ t-\frac{\eta}{k}>4\mu^{-\frac{1}{3}},
				\end{array}\right.
			\end{equation*}
		\end{enumerate}
		and 
		\begin{enumerate}
			\item if $k=0$ or $|k|>C_2\mu^{-\frac{1}{2}}:\ M^6(t,k,\eta)=1$;
			\item if $k\neq0$, $\frac{\eta}{k}<-4\mu^{-\frac{1}{3}},$ and $|k|\leq C_2\nu^{-\frac{1}{2}}:\ M^6(t,k,\eta)=1$;
			\item if $k\neq0,\ -4\mu^{-\frac{1}{3}}\leq\frac{\eta}{k}\leq0$, and $|k|\leq C_2\nu^{-\frac{1}{2}}$:
			\begin{equation*}
				M^6(t,k,\eta)=\left\{\begin{array}{ll}
					\left(\frac{k^2+\eta^2}{k^2+(\eta-kt)^2}\right)^{\frac{1}{2}},\ &\textrm{if}\ t-\frac{\eta}{k}\leq4\mu^{-\frac{1}{3}},\\
					\left(\frac{k^2+\eta^2}{k^2+16\mu^{-\frac{2}{3}}k^2}\right)^{\frac{1}{2}},\ &\textrm{if}\ t-\frac{\eta}{k}>4\mu^{-\frac{1}{3}},
				\end{array}\right.
			\end{equation*} 
			\item if $k\neq0,\ \frac{\eta}{k}>0$, and $|k|\leq C_2\nu^{-\frac{1}{2}}$:
			\begin{equation*}
				M^6(t,k,\eta)=\left\{\begin{array}{ll}
					1,&\textrm{if}\ t-\frac{\eta}{k}\leq0,\\
					\left(\frac{k^2}{k^2+(\eta-kt)^2}\right)^{\frac{1}{2}},\ &\textrm{if}\ 0<t-\frac{\eta}{k}\leq4\mu^{-\frac{1}{3}},\\
					\left(\frac{k^2}{k^2+16\mu^{-\frac{2}{3}}k^2}\right)^{\frac{1}{2}},\ &\textrm{if}\ t-\frac{\eta}{k}>4\mu^{-\frac{1}{3}}.
				\end{array}\right.
			\end{equation*}
		\end{enumerate}
		Thus by direct computation, \eqref{214}-\eqref{217} hold for $M^5,M^6$, completing the proof.
	\end{proof}
	
	Now we define the main multiplier
	\begin{equation}\label{m}
		M:=\left\{\begin{array}{ll}
			A\langle|\nabla|\rangle^NM^1M^2 M^3&\mathrm{if}\ 0<\mu^{3}\leq\nu\leq\mu^{\frac{1}{3}}\leq1,\\
			A\langle|\nabla|\rangle^NM^1M^2 M^3M^4M^5&\mathrm{if}\ 0<\mu^{\frac{1}{3}}\leq\nu\leq1,\\
			A\langle|\nabla|\rangle^NM^1M^2 M^3M^6&\mathrm{if}\ 0<\nu\leq\mu^{3}\leq1,
		\end{array}\right.
	\end{equation}
	where $A$ is the enhanced dissipation time-decay multiplier given by
	\begin{equation*}
		\left\{\begin{array}{l}
			A_k(t,\eta)=e^{\delta_0\lambda^{\frac{1}{3}}t},\ \mathrm{for}\ k\neq0,\\
			A_0(t,\eta)=1,\ k=0,
		\end{array}\right.
	\end{equation*}
	with $\displaystyle \delta_0\leq\frac{1}{100|\beta|}$.

	\section{Linear Problem}
	The treatment of the linear problem was essentially already obtained in \cite{D24,K24,ZZ23} with slight modification. For the sake of completeness, we list the linear results in this section and give a sketch of the proof in the appendix. When $0<\nu\leq\mu^3\leq1$, we consider the linearized equations of $(\Omega,J)$ 
	\begin{equation*}
		\left\{\begin{array}{l}
			\partial_t\Omega-\beta\partial_XJ-\nu\Delta_L\Omega=0,\\
			\partial_tJ-\beta\partial_X\Omega-\mu\Delta_L J+2\partial_{XY}^L\Phi=0.\\
		\end{array}\right.
	\end{equation*} 
	Note that $\Omega_0,J_0$ solve the heat equation and the estimates are standard, for which we omit further details. For the nonzero-modes, we define the associated energy functional as
	\begin{equation*}
		E(t):=\frac{1}{2}\left(\lVert M\Omega_{\neq}\rVert^2_{L^2}+\left\lVert MJ_{\neq}\right\rVert^2_{L^2}+\frac{2}{\beta}Re\left\langle\frac{\partial_Y^L\chi}{ \Delta_L}M\Omega_{\neq},MJ_{\neq}\right\rangle\right),
	\end{equation*}
	where the multiplier $\chi$ is such that
		\begin{equation}\label{chii}
		\chi(t,k,\eta)=\left\{\begin{array}{ll}
			1,\ &\mathrm{if}\ 0<t-\frac{\eta}{k}\leq4\mu^{-\frac{1}{3}}\ \mathrm{and}\ k\neq0,\\
			0,\ &\mathrm{otherwise}.\
		\end{array}\right.
		\end{equation}
	Since
		\begin{equation*}
			\left|\frac{1}{\beta}\left\langle\frac{\partial_Y^L\chi}{ \Delta_L}M\Omega_{\neq},MJ_{\neq}\right\rangle\right|\leq\frac{1}{|\beta|}\|M\Omega_{\neq}\|_{L^2}\|MJ_{\neq}\|_{L^2},
		\end{equation*}
		we get that $E(t)$ is coercive, with
		\begin{equation*}
			\frac{1}{2}\left(1-\frac{1}{2|\beta|}\right)\left(\|M\Omega_{\neq}\|^2_{L^2}+\|MJ_{\neq}\|^2_{L^2}\right)\leq E(t)\leq\frac{1}{2}\left(1+\frac{1}{2|\beta|}\right)\left(\|M\Omega_{\neq}\|^2_{L^2}+\|MJ_{\neq}\|^2_{L^2}\right),
		\end{equation*}
		for $|\beta|>1/2$. Then we have the following linear result.
	\begin{proposition}\label{linearest3}
		Let $0<\nu\leq\mu^{3}\leq1$, $|\beta|>1/2$, and $N\geq4$. It holds that
		\begin{equation*}
			E(t)+\frac{1}{100|\beta|}\int_{0}^{t}D(\tau)+CK(\tau)\mathrm{d}\tau\leq E(0),
		\end{equation*}
		where
		\begin{equation*}
			\begin{aligned}
				&E(t)=\frac{1}{2}\left(\lVert M\Omega_{\neq}\rVert^2_{L^2}+\left\lVert MJ_{\neq}\right\rVert^2_{L^2}+\frac{2}{\beta}Re\left\langle\frac{\partial_Y^L\chi}{ \Delta_L}M\Omega_{\neq},MJ_{\neq}\right\rangle\right),\\
				&D(t)=\nu \lVert \nabla_LM\Omega_{\neq}\rVert^2_{L^2}+\mu \lVert \nabla_L MJ_{\neq}\rVert^2_{L^2},\\
				&CK(t)=\sum_{F\in\{\Omega_{\neq},J_{\neq}\}}\left(\left\lVert\Gamma MF\right\rVert^2_{L^2}+\lambda^{\frac{1}{3}}\left\lVert |\partial_X|^{\frac{1}{3}}MF\right\rVert^2_{L^2}\right),
			\end{aligned}
		\end{equation*}
		with $\Gamma,\chi$ given by \eqref{defgamma} and \eqref{chii} respectively. Furthermore, we have
		\begin{equation*}
			\lVert \Omega_{\neq}(t)\rVert_{H^N}+\lVert J_{\neq}(t)\rVert_{H^N}\lesssim \min\{\mu^{-\frac{1}{3}},\langle t\rangle\}e^{-\delta_0\nu^{1/3}t}(\lVert \omega_{\neq}^{in}\rVert_{H^N}+\lVert j_{\neq}^{in}\rVert_{H^N}).
		\end{equation*}
	\end{proposition}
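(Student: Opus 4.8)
\noindent\emph{Proof plan.} The coercivity $E(t)\approx\|M\Omega_{\neq}\|_{L^2}^2+\|MJ_{\neq}\|_{L^2}^2$ is already available and $(\Omega_0,J_0)$ obey heat equations, so everything reduces to the differential inequality
\[
\frac{d}{dt}E(t)\le-\frac{1}{100|\beta|}\big(D(t)+C\,CK(t)\big),
\]
plus an unpacking of $M$ at the end. The plan is to differentiate $E(t)$, substitute the linearized equations, and sort the outcome into four groups: (i) the \emph{diagonal multiplier terms} $\langle\tfrac{\partial_tM}{M}MF,MF\rangle$, $F\in\{\Omega_{\neq},J_{\neq}\}$, where $\tfrac{\partial_tM}{M}=\delta_0\lambda^{1/3}+\sum_{i\in\{1,2,3,6\}}\tfrac{\partial_tM^i}{M^i}$, the $M^1M^2M^3M^6$-parts being $\le0$ and the $A$-part contributing the harmful $+\delta_0\lambda^{1/3}\|MF\|^2$; (ii) the \emph{diagonal dissipation} $-\nu\|\nabla_LM\Omega_{\neq}\|^2-\mu\|\nabla_LMJ_{\neq}\|^2=-D(t)$; (iii) the \emph{coupling} $\beta\partial_X$, which cancels identically in $\tfrac{d}{dt}\big(\tfrac12\|M\Omega_{\neq}\|^2+\tfrac12\|MJ_{\neq}\|^2\big)$ since $\partial_X$ is skew-adjoint and commutes with $M$; and (iv) the \emph{stretching term} $-2\langle M\partial_{XY}^L\Phi,MJ_{\neq}\rangle$ paired with the full time-derivative of the cross energy $\tfrac2\beta\mathrm{Re}\langle mM\Omega_{\neq},MJ_{\neq}\rangle$, $m:=\tfrac{\partial_Y^L\chi}{\Delta_L}$.

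Groups (i)--(ii) are dealt with by the multiplier design. Combining $-\tfrac{\partial_tM^2_k}{M^2_k}$ with the $M^2$-share of the diffusion and invoking \eqref{213} produces $\gtrsim C_2\lambda^{1/3}\||\partial_X|^{1/3}MF\|^2$; since $\delta_0\le\tfrac1{100|\beta|}\ll C_2$ and $|k|\ge1$ on nonzero modes, this both swallows the $A$-error $\delta_0\lambda^{1/3}\|MF\|^2$ and supplies the enhanced-dissipation part of $CK$. On $0<|k|\le C_2\lambda^{-1/2}$ one has $-\tfrac{\partial_tM^1_k}{M^1_k}=C_1\Gamma_k^2$, so $M^1$ directly yields $C_1\|\Gamma MF\|^2$ there, while on $|k|>C_2\lambda^{-1/2}$ one uses $\lambda k^2\ge C_2^2$ to bound the $\|\Gamma MF\|^2$-part of $CK$ by $C_2^{-2}D$. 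The multiplier $M^3$ (with $C_3>C_1/(|\beta|-1/2)$) plays exactly the role it does in \cite{D24}: sharpening the constants so the estimate still closes down to $|\beta|=1/2$.

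Group (iv) is the crux, and I would expand $\tfrac{d}{dt}\tfrac2\beta\mathrm{Re}\langle mM\Omega_{\neq},MJ_{\neq}\rangle$ into: (a) the $\partial_t\chi$ jumps — the one at $t=\eta/k$ vanishes because the symbol of $\tfrac{\partial_Y^L}{\Delta_L}$ is zero there, and the single jump at $t=\eta/k+4\mu^{-1/3}$, of total size $\lesssim\mu^{1/3}\|M\Omega_{\neq}\|_{L^2}\|MJ_{\neq}\|_{L^2}$, is absorbed into $\int_0^tCK\,d\tau$; (b) the term with $\chi\,\partial_t(\partial_Y^L/\Delta_L)$, whose symbol is $O(\Gamma_k^2/|k|)$ and hence absorbed into $\|\Gamma MF\|^2$; (c) the terms where $m$ meets the equations' right-hand sides. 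In (c), the $\beta\partial_X$-pieces yield precisely the inviscid-damping gain $\sim-\|\Gamma M\Omega_{\neq}\|^2-\|\Gamma MJ_{\neq}\|^2$, and — the key algebraic identity — the $J$-stretching piece $\tfrac2\beta\mathrm{Re}\langle mM\Omega_{\neq},-2M\partial_{XY}^L\Phi\rangle$ plus the diagonal stretching $-2\langle M\partial_{XY}^L\Phi,MJ_{\neq}\rangle$ cancel exactly on the critical window $\{0<t-\tfrac\eta k\le4\mu^{-1/3}\}=\{\chi=1\}$; off it the leftover $-2(1-\chi)\tfrac{k(\eta-kt)}{k^2+(\eta-kt)^2}|M\hat J_{\neq}|^2$ has the favourable sign for $t\le\eta/k$ and is $O(\mu^{1/3})$-small for $t-\eta/k>4\mu^{-1/3}$, hence controlled by $\mu\|\nabla_LMJ_{\neq}\|^2$. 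The remaining pieces of (c) — the cross-dissipation terms $\mathrm{Re}\langle m\nu M\Delta_L\Omega_{\neq},MJ_{\neq}\rangle$ and $\mathrm{Re}\langle mM\Omega_{\neq},\mu M\Delta_L J_{\neq}\rangle$, the cross-versus-$\tfrac{\partial_tM}{M}$ interactions, and the $\tfrac{\partial_\eta M}{M}$-errors governed by \eqref{214} — are bounded by a small multiple of $D+CK$ using $|\widehat m|\lesssim|k|^{-1}\langle t-\eta/k\rangle^{-1}$ together with the structure of $\nabla_Lm$, exactly as in \cite{ZZ23,D24,K24}; here this needs only cosmetic changes to accommodate two distinct coefficients with $\nu\le\mu^3$. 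The hard part will be keeping all of these error terms small enough that the favourable contributions of $M^1$ and especially $M^3$ still win at $|\beta|>1/2$.

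Assembling the four groups and choosing $1\ll C_1\ll C_2$ (with $C_3$ as above) and $\delta_0\le\tfrac1{100|\beta|}$ small, the surviving negative terms dominate $\tfrac1{100|\beta|}(D+C\,CK)$, and integrating in time gives the stated energy identity. For the final decay bound: every multiplier equals $1$ at $t=0$, so $M(0)=\langle|\nabla|\rangle^N$ and $E(0)\lesssim\|\omega_{\neq}^{in}\|_{H^N}^2+\|j_{\neq}^{in}\|_{H^N}^2$; coercivity and $E(t)\le E(0)$ then give $\|MF(t)\|_{L^2}\lesssim\|\omega_{\neq}^{in}\|_{H^N}+\|j_{\neq}^{in}\|_{H^N}$; and writing $\langle|\nabla|\rangle^NF=(AM^1M^2M^3M^6)^{-1}MF$ and using $M^1,M^2,M^3\approx1$ by \eqref{211}, $A_k=e^{\delta_0\lambda^{1/3}t}=e^{\delta_0\nu^{1/3}t}$ (here $\lambda=\min\{\nu,\mu\}=\nu$), and $|M^6_k|\ge\max\{\langle t\rangle^{-1},\mu^{1/3}\}$ by \eqref{217}, I obtain $\|F(t)\|_{H^N}\lesssim\min\{\mu^{-1/3},\langle t\rangle\}\,e^{-\delta_0\nu^{1/3}t}\big(\|\omega_{\neq}^{in}\|_{H^N}+\|j_{\neq}^{in}\|_{H^N}\big)$ for $F\in\{\Omega_{\neq},J_{\neq}\}$.
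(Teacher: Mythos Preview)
Your overall framework and the final step (unpacking $M$ via \eqref{211} and \eqref{217} to obtain the $\min\{\mu^{-1/3},\langle t\rangle\}e^{-\delta_0\nu^{1/3}t}$ bound) are correct, but group (iv) misidentifies the cancellation mechanism, and without the right one the energy inequality does not close. Two concrete errors. First, the $\beta\partial_X$-pieces of the cross energy do \emph{not} produce inviscid-damping gains here. Writing $a(k,\eta):=-k(\eta-kt)/p>0$ on $\mathrm{supp}\,\chi$, one has
\[
\mathrm{Re}\langle m\,\partial_X MJ_{\neq},MJ_{\neq}\rangle=-\!\int a\chi\,|M\hat J_{\neq}|^2,\qquad
\mathrm{Re}\langle mM\Omega_{\neq},\partial_X M\Omega_{\neq}\rangle=+\!\int a\chi\,|M\hat\Omega_{\neq}|^2,
\]
so the second piece is a \emph{new} bad term, not a gain. (Contrast Proposition~\ref{linearest}: in the symmetrized $(Z,Q)$ system both equations carry stretching $\pm\tfrac12\tfrac{\partial_tp}{p}$ and the cross cancels them exactly; here only the $J$-equation carries stretching, with coefficient $2a$, and the cross halves it at the price of transferring a copy to $\Omega$.) Second, the term you call ``the $J$-stretching piece $\tfrac2\beta\mathrm{Re}\langle mM\Omega_{\neq},-2M\partial_{XY}^L\Phi\rangle$'' is a genuine $\Omega\times J$ cross error, bounded by $|\beta|^{-1}\|\Gamma M\Omega_{\neq}\|\,\|\Gamma MJ_{\neq}\|$; it is not quadratic in $J$ and cannot cancel the diagonal stretching $+2\!\int a|M\hat J_{\neq}|^2$.

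The actual balance on $\mathrm{supp}\,\chi$ is five-way: the diagonal stretching $+2a|M\hat J|^2$, the two $\beta\partial_X$ cross pieces $-a|M\hat J|^2+a|M\hat\Omega|^2$, and the $M^6$ multiplier contributions $-a|M\hat\Omega|^2-a|M\hat J|^2$ (since $-\partial_tM^6_k/M^6_k=a$ on that window) sum to zero. Thus $M^6$ is not merely bookkeeping for the lower bound at the end; it is the mechanism that absorbs the residual growth on the critical time window, playing the role that the symmetrization $\Gamma$ plays for $(Z,Q)$ in Proposition~\ref{linearest}. This is precisely why the regime $\nu\le\mu^3$ is treated directly on $(\Omega,J)$ with $M^6$, in the spirit of \cite{ZZ23}. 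Once this is set up, the remaining errors (the $\partial_tm$ piece, cross-dissipation, cross-$\partial_tM^i/M^i$ interactions, and the off-$\chi$ leftover you correctly described) are handled as in the proof of Proposition~\ref{linearest}, with $M^3$ closing the constants at $|\beta|>1/2$. One further slip: the hierarchy is $C_1>1000C_2$ with $C_2>3000$, not $C_1\ll C_2$.
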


	For $0<\mu^3\leq\nu\leq\mu^{\frac{1}{3}}\leq1$, following \cite{D24}, we consider the linearized problem of \eqref{ZQeq}
	\begin{equation*}
		\left\{
		\begin{array}{l}
			\displaystyle\partial_tZ-\beta \partial_XQ-\nu \Delta_L Z-\frac{\partial_{XY}^L}{\Delta_L}Z=0,\\
			\displaystyle\partial_tQ-\beta \partial_XZ-\mu \Delta_LQ+\frac{\partial_{XY}^L}{\Delta_L}Q=0.\\
		\end{array}\right.
	\end{equation*}
	Then the equations for $(MZ,MQ)$ read
	\begin{equation}\label{mequation}
		\left\{
		\begin{array}{l}
			\displaystyle\partial_t(MZ)-\frac{\partial_tM}{M}MZ-\beta \partial_XMQ-\nu \Delta_L MZ-\frac{\partial_{XY}^L}{\Delta_L}MZ=0,\\
			\displaystyle\partial_t(MQ)-\frac{\partial_tM}{M}MQ-\beta \partial_XMZ-\mu \Delta_LMQ+\frac{\partial_{XY}^L}{\Delta_L}MQ=0.\\
		\end{array}\right.
	\end{equation}
	The associated energy functional is given by
	\begin{equation*}
		\overline{E}(t):=\frac{1}{2}\left(\lVert MZ\rVert^2_{L^2}+\left\lVert MQ\right\rVert^2_{L^2}+\frac{2}{\beta}Re\left\langle\frac{\partial_Y^L\overline{\chi}}{ \Delta_L}MZ,MQ\right\rangle\right),
	\end{equation*}
	where $\overline{\chi}$ satisfies
	\begin{equation}\label{defchi}
		\overline{\chi}(t,k,\eta)=\left\{\begin{array}{ll}
			1,\ &\mathrm{if}\ |t-\frac{\eta}{k}|\leq4\nu^{-1}\ \mathrm{and}\ k\neq0,\\
			0,\ &\mathrm{otherswise}.\
		\end{array}\right.
	\end{equation}
	Then we have the following linear estimates.
	\begin{proposition}\label{linearest}
		Let $0<\mu^3\leq\nu\leq\mu^{\frac{1}{3}}\leq1$, $|\beta|>1/2$, and $N\geq4$. It holds that
		\begin{equation}\label{linest}
			\overline{E}(t)+\frac{1}{100|\beta|}\int_{0}^{t}\overline{D}(\tau)+\overline{CK}(\tau)\mathrm{d}\tau\leq \overline{E}(0),
		\end{equation}
		where
		\begin{equation}\label{defchieng}
			\begin{aligned}
				&\overline{E}(t):=\frac{1}{2}\left(\lVert MZ\rVert^2_{L^2}+\left\lVert MQ\right\rVert^2_{L^2}+\frac{2}{\beta}Re\left\langle\frac{\partial_Y^L\overline{\chi}}{ \Delta_L}MZ,MQ\right\rangle\right),\\
				&\overline{D}(t)=\nu \lVert \nabla_LMZ\rVert^2_{L^2_{X,Y}}+\mu \lVert \nabla_L MQ\rVert^2_{L^2_{X,Y}},\\
				&\overline{CK}(t)=\sum_{F\in\{Z,Q\}}\left(\left\lVert\Gamma MF\right\rVert^2_{L^2_{X,Y}}+\lambda^{\frac{1}{3}}\left\lVert |\partial_X|^{\frac{1}{3}}MF\right\rVert^2_{L^2_{X,Y}}\right),
			\end{aligned}
		\end{equation}
		with $\Gamma,\overline{\chi}$ given by \eqref{defgamma} and \eqref{defchi} respectively. Furthermore, we have
		\begin{align}
			&\lVert Z(t)\rVert_{H^N}+\lVert Q(t)\rVert_{H^N}\lesssim e^{-\delta_0\lambda^{1/3}t}(\lVert Z^{in}\rVert_{H^N}+\lVert Q^{in}\rVert_{H^N}),\label{zbd}\\
			&\lVert \Omega_{\neq}(t)\rVert_{H^N}+\lVert J_{\neq}(t)\rVert_{H^N}\lesssim \langle t\rangle e^{-\delta_0\lambda^{1/3}t}(\lVert \omega_{\neq}^{in}\rVert_{H^N}+\lVert j_{\neq}^{in}\rVert_{H^N}),\label{omgbd}\\
			&\lVert U^1_{\neq}(t)\rVert_{H^N}+\langle t\rangle\lVert U^2(t)\rVert_{H^{N-1}}\lesssim  e^{-\delta_0\lambda^{1/3}t}(\lVert \omega_{\neq}^{in}\rVert_{H^N}+\lVert j_{\neq}^{in}\rVert_{H^N}).\label{ubd}
		\end{align}
	\end{proposition}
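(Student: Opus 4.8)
\medskip
\noindent\textbf{Proof strategy.}
The plan is to run a weighted $L^2$ energy estimate on the symmetrized system. The key structural point is that \eqref{mequation} has coefficients depending only on $t$, so in the Fourier variables $(k,\eta)$ it decouples into a family of ODEs; the whole estimate can thus be carried out frequency by frequency, and for $k=0$ one has $Z_0=Q_0\equiv0$ because $\Gamma$ annihilates the $X$-average, so only $k\neq0$ enters. I would differentiate $\overline E(t)$ and show $\frac{d}{dt}\overline E\le-\frac{1}{100|\beta|}(\overline D+\overline{CK})$. Differentiating $\frac12(\|MZ\|_{L^2}^2+\|MQ\|_{L^2}^2)$ via \eqref{mequation} produces: (i) the diffusion $-\nu\|\nabla_LMZ\|^2-\mu\|\nabla_LMQ\|^2=-\overline D$; (ii) the multiplier terms $\langle\frac{\partial_tM}{M}MF,MF\rangle\le0$, $F\in\{Z,Q\}$, which after removing the only growing factor $\frac{\partial_tA}{A}=\delta_0\lambda^{1/3}$ deliver $-\overline{CK}$ --- the $\|\Gamma MF\|^2$ piece from $-\partial_tM^1_k/M^1_k=C_1\Gamma_k^2$ (with the stronger $M^3$ bound $C_3\Gamma_k^3$ giving the extra room that makes $|\beta|>\frac12$ sharp), and the enhanced-dissipation piece $\lambda^{1/3}\||\partial_X|^{1/3}MF\|^2$ from \eqref{213}, which moreover absorbs $\delta_0\lambda^{1/3}$ for $\delta_0$ small since $|k|\ge1$; (iii) the magnetic coupling $\beta\langle\partial_XMQ,MZ\rangle+\beta\langle\partial_XMZ,MQ\rangle=0$ by skew-adjointness of $\partial_X$; and (iv) the stretching pair $\langle\frac{\partial_{XY}^L}{\Delta_L}MZ,MZ\rangle-\langle\frac{\partial_{XY}^L}{\Delta_L}MQ,MQ\rangle$, which is the only dangerous term and the reason for the cross term in $\overline E$ and for $M^4,M^5$.

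The heart of the argument is to kill (iv), which I would do by decomposing frequency space according to $t-\eta/k$. On the resonant window $|t-\eta/k|\le4\nu^{-1}$, where $\overline\chi=1$, I would differentiate the cross term $\frac1\beta\mathrm{Re}\langle\frac{\partial_Y^L\overline\chi}{\Delta_L}MZ,MQ\rangle$ and substitute \eqref{mequation}: since $\partial_X$ commutes with $\partial_Y^L/\Delta_L$, the $\beta\partial_X$-coupling produces exactly $\langle\frac{\partial_{XY}^L}{\Delta_L}MQ,MQ\rangle-\langle\frac{\partial_{XY}^L}{\Delta_L}MZ,MZ\rangle$, cancelling (iv) on that window. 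The leftover terms --- $\partial_t(\frac{\partial_Y^L\overline\chi}{\Delta_L})$, a bounded multiplier (using $\partial_t\partial_Y^L=-\partial_X$, $\partial_t\Delta_L=-2\partial_{XY}^L$) plus the Dirac contributions of $\partial_t\overline\chi$ at $t-\eta/k=\pm4\nu^{-1}$, and the diffusion and $\frac{\partial_tM}{M}$ acting through the cross term --- are absorbed by $\overline D$, $\overline{CK}$, and in particular by $-\partial_tM^4_k/M^4_k=C_4\nu/(1+\nu^2(\eta/k-t)^2)$, which is comparable to $\nu$ across $|t-\eta/k|\lesssim\nu^{-1}$ and pays for the entering boundary term. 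On the next window $4\nu^{-1}\le t-\eta/k\le4\mu^{-1/3}$ the cancellation is unavailable, but there $|\partial_{XY}^L/\Delta_L|\approx|t-\eta/k|^{-1}$ is precisely matched by $-\partial_tM^5_k/M^5_k=(t-\eta/k)/(1+(\eta/k-t)^2)\approx|t-\eta/k|^{-1}$, so (iv) is swallowed by the $M^5$ part of $\overline{CK}$. For $|t-\eta/k|>4\mu^{-1/3}$ one has $|\partial_{XY}^L/\Delta_L|\lesssim\mu^{1/3}=\lambda^{1/3}$, dominated by $\lambda^{1/3}\||\partial_X|^{1/3}MF\|^2\ge\lambda^{1/3}\|MF\|^2$; and for $|k|>C_2\mu^{-1/2}$ the bare diffusion $\mu k^2\gtrsim1$ dominates $|\partial_{XY}^L/\Delta_L|\le\frac12$ directly. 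The remaining commutator errors, from $\langle|\nabla|\rangle^N$ failing to commute with $\nabla_L$ and $\partial_Y^L/\Delta_L$ (controlled via \eqref{214}), fall into $\overline D+\overline{CK}$. Integrating in time and invoking coercivity of $\overline E$ --- valid since $|\partial_Y^L\overline\chi/\Delta_L|\le\frac{1}{2|k|}\le\frac12$ and $|\beta|>\frac12$ --- gives \eqref{linest}.

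For the pointwise bounds the key observation is again that the system is diagonal in $(k,\eta)$, so \eqref{linest} and coercivity yield, at each frequency, $|\widehat{MZ}(t,k,\eta)|+|\widehat{MQ}(t,k,\eta)|\lesssim|\widehat{MZ}(0,k,\eta)|+|\widehat{MQ}(0,k,\eta)|=\langle k,\eta\rangle^N(|\widehat{Z^{in}}|+|\widehat{Q^{in}}|)=\frac{\langle k,\eta\rangle^N}{\langle\eta/k\rangle}(|\widehat{\Omega^{in}}|+|\widehat{J^{in}}|)$, using $\Gamma_k|_{t=0}=\langle\eta/k\rangle^{-1}$. Since $M^1,\dots,M^4\approx1$ and $M^5\gtrsim(1+\alpha)^{-1}\gtrsim1$ by \eqref{216} (here $\alpha=\nu\mu^{-1/3}\le1$), we have $M_k(t)\gtrsim e^{\delta_0\lambda^{1/3}t}\langle k,\eta\rangle^N$, so $|\widehat Z(t)|,|\widehat Q(t)|\lesssim e^{-\delta_0\lambda^{1/3}t}(|\widehat{Z^{in}}|+|\widehat{Q^{in}}|)$, which is \eqref{zbd}. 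Then $\widehat{\Omega_{\neq}}(t)=\Gamma_k^{-1}\widehat Z(t)=\langle\eta/k-t\rangle\widehat Z(t)$ together with the elementary inequality $\langle a-b\rangle\le\sqrt2\langle a\rangle\langle b\rangle$ gives $|\widehat{\Omega_{\neq}}(t)|\lesssim\frac{\langle\eta/k-t\rangle}{\langle\eta/k\rangle}e^{-\delta_0\lambda^{1/3}t}(|\widehat{\Omega^{in}}|+|\widehat{J^{in}}|)\lesssim\langle t\rangle e^{-\delta_0\lambda^{1/3}t}(|\widehat{\Omega^{in}}|+|\widehat{J^{in}}|)$, and likewise for $J_{\neq}$, which is \eqref{omgbd}. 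Finally, from $U^1=-\partial_Y^L\Delta_L^{-1}\Omega$ and $U^2=\partial_X\Delta_L^{-1}\Omega$ one computes $|\widehat{U^1}|=\frac{|\eta/k-t|}{|k|\langle\eta/k-t\rangle}|\widehat Z|\le|k|^{-1}|\widehat Z|$ and $|\widehat{U^2}|=\frac{1}{|k|\langle\eta/k-t\rangle}|\widehat Z|$; combined with $\langle t\rangle\le\sqrt2\langle\eta/k\rangle\langle\eta/k-t\rangle$ this yields $\|U^1_{\neq}(t)\|_{H^N}+\langle t\rangle\|U^2(t)\|_{H^{N-1}}\lesssim\|Z(t)\|_{H^N}$, and \eqref{zbd} closes \eqref{ubd}.

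The main obstacle throughout is the stretching term (iv): one must arrange $\overline\chi$ and the windows of $M^4,M^5$ to tile $\{t\ge\eta/k\}$ consistently, and then verify quantitatively that on each window the good term available there --- the cross-term cancellation on $|t-\eta/k|\le4\nu^{-1}$, the $M^4$ then $M^5$ Cauchy--Kovalevskaya terms, the enhanced dissipation, and the bare dissipation off the frequency truncation --- genuinely dominates $|\partial_{XY}^L/\Delta_L|$ there; a secondary accounting point is that $(Z,Q)$ stays bounded while $(\Omega_{\neq},J_{\neq})$ necessarily inflates by $\langle t\rangle$, which is the cost of inverting $\Gamma$.
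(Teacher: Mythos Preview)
Your overall architecture is right --- mode-by-mode energy, the cross term in $\overline E$ cancels the stretching on the support of $\overline\chi$, and the pointwise consequences follow from coercivity --- but you have imported the wrong multiplier into this regime. For $\mu^3\le\nu\le\mu^{1/3}$ the weight is $M=A\langle|\nabla|\rangle^NM^1M^2M^3$ only; the factors $M^4,M^5$ enter solely when $\mu^{1/3}\le\nu$ (Proposition~\ref{linearest2}). Two of your steps therefore fail as written. First, you absorb the diffusion--cross error and the $\partial_t\overline\chi$ boundary contribution into $-\partial_tM^4_k/M^4_k$, but that term is not available. The paper instead splits the $L_1$-type term as $\nu\|\sqrt{p}MZ\|^2+\mu\|MZ\|^2+\mu\|\sqrt{p}MQ\|^2+\nu\|MQ\|^2$ (up to constants) and uses precisely the regime hypothesis: $\mu\le\nu^{1/3}$ and $\nu\le\mu^{1/3}$ let the two zeroth-order pieces be swallowed by the enhanced-dissipation CK term $\lambda^{1/3}|k|^{2/3}$ coming from \eqref{213}. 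Second, your window $4\nu^{-1}\le t-\eta/k\le4\mu^{-1/3}$, where you invoke $M^5$, is empty here since $\nu\le\mu^{1/3}$ forces $\nu^{-1}\ge\mu^{-1/3}$; the argument is vacuous rather than wrong, but it shows the regimes have been conflated.

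Relatedly, your off-window bound ``$|\partial_{XY}^L/\Delta_L|\lesssim\mu^{1/3}=\lambda^{1/3}$'' is false when $\nu\le\mu$, since then $\lambda=\nu$ and $\mu^{1/3}>\lambda^{1/3}$. The paper handles the complement of $\overline\chi$ --- which is $|t-\eta/k|>4\nu^{-1}$, a much smaller set than $|t-\eta/k|>4\mu^{-1/3}$ --- by the bare dissipation $\overline D$: there $|\partial_tp/(2p)|\le\tfrac{\nu}{50}p$ for the $Z$-piece, and since $4\nu^{-1}\ge4\mu^{-1/3}$ also $|\partial_tp/(2p)|\le\tfrac{\mu}{50}p$ for the $Q$-piece. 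This is exactly where the constraint $\nu\le\mu^{1/3}$ is used, and together with $\mu^3\le\nu$ it is what makes $M^4,M^5$ unnecessary in this proposition.

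Your derivation of \eqref{omgbd} via the pointwise frequency bound $|\widehat{\Omega_{\neq}}(t,k,\eta)|\lesssim\frac{\langle\eta/k-t\rangle}{\langle\eta/k\rangle}e^{-\delta_0\lambda^{1/3}t}(|\widehat{\Omega^{in}}|+|\widehat{J^{in}}|)$ is a legitimate alternative to the paper's route, which instead runs a second energy estimate on $(M\Omega_{\neq},MJ_{\neq})$, bounds the stretching by $2\|MQ\|\|MJ_{\neq}\|$, and integrates in time. Both give the same $\langle t\rangle$ factor; yours is slightly more direct for the linear problem.
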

	When $0<\mu^{\frac{1}{3}}\leq\nu\leq1$, we need to use $M^4,M^5$ to control the growth caused by $\frac{\partial^L_{XY}}{\Delta_L}Q$ and get the following result.
	\begin{proposition}\label{linearest2}
		Let $0<\mu^{\frac{1}{3}}\leq\nu\leq1$, $|\beta|>1/2$, and $N\geq4$. It holds that
		\begin{equation*}
			E(t)+\frac{1}{100|\beta|}\int_{0}^{t}D(\tau)+CK(\tau)\mathrm{d}\tau\leq E(0),
		\end{equation*}
		with $\overline{E},\overline{D},\overline{CK}$ defined in \eqref{defchieng}. Furthermore, we obtain
			\begin{align*}
			&\lVert Z(t)\rVert_{H^N}+\lVert Q(t)\rVert_{H^N}\lesssim \nu\mu^{-\frac{1}{3}}e^{-\delta_0\mu^{1/3}t}(\lVert Z^{in}\rVert_{H^N}+\lVert Q^{in}\rVert_{H^N}),\\
			&\lVert \Omega_{\neq}(t)\rVert_{H^N}+\lVert J_{\neq}(t)\rVert_{H^N}\lesssim \nu\mu^{-\frac{1}{3}}\langle t\rangle e^{-\delta_0\mu^{1/3}t}(\lVert \omega_{\neq}^{in}\rVert_{H^N}+\lVert j_{\neq}^{in}\rVert_{H^N}).
		\end{align*}
	\end{proposition}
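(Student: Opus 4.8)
\textbf{Proof proposal for Proposition \ref{linearest2}.}

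The plan is to mirror the scheme of Proposition \ref{linearest}, but now the multiplier $M$ includes the extra factors $M^4M^5$ which are precisely engineered to absorb the destabilizing contribution of the stretching terms $-\frac{\partial_{XY}^L}{\Delta_L}Z$ and $\frac{\partial_{XY}^L}{\Delta_L}Q$ in the regime $\mu^{1/3}\le\nu$, where those terms act over the long window $|t-\eta/k|\lesssim\mu^{-1/3}$. First I would compute $\frac{d}{dt}\overline{E}(t)$ using the $(MZ,MQ)$ system \eqref{mequation}. The dissipation and the $\beta$-transport terms are handled exactly as in the $\mu^3\le\nu$ case: the $\beta\partial_X$ cross terms cancel after taking the real part, the diffusion gives $-\overline{D}(t)$, and the good part of $-\frac{\partial_tM}{M}$ coming from $M^2$ produces the enhanced-dissipation term $\lambda^{1/3}\||\partial_X|^{1/3}MF\|_{L^2}^2$ in $\overline{CK}$ via \eqref{213}, while the $M^1,M^3$ pieces produce the $\|\Gamma MF\|_{L^2}^2$ inviscid-damping terms. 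The genuinely new work is the $\frac{\partial_{XY}^L}{\Delta_L}$-terms together with the time derivative of the correction term $\frac{2}{\beta}\mathrm{Re}\langle\frac{\partial_Y^L\overline{\chi}}{\Delta_L}MZ,MQ\rangle$.

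The key step is the interaction between the stretching terms and the $\overline{\chi}$-correction. Writing out $\partial_t\big(\frac{\partial_Y^L\overline{\chi}}{\Delta_L}\big)$ and using $\partial_t\partial_Y^L=-\partial_X$, $\partial_t\Delta_L=-2\partial_X\partial_Y^L$, one sees that the leading piece of the correction-term derivative is $-\frac{2}{\beta}\mathrm{Re}\langle\frac{\partial_X\overline{\chi}}{\Delta_L}MZ,MQ\rangle$ plus terms supported on $\partial_t\overline{\chi}$ (a boundary contribution at $|t-\eta/k|=4\nu^{-1}$, controlled by $\nu\|\Gamma MZ\|\|\Gamma MQ\|$-type quantities since $\frac{\partial_Y^L}{\Delta_L}$ is $O(\nu^{-1})$ there while the measure of the boundary layer is $O(1)$) and terms where the derivative hits $MZ$ or $MQ$, which reproduce the $\beta$-transport structure and combine with the main equation's cross terms. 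The upshot, exactly as in Dolce's argument, is that on the support of $\overline{\chi}$ the dangerous $\frac{\partial_{XY}^L}{\Delta_L}$-contributions are matched against $-\frac{\partial_tM^4}{M^4}$ (whose bubble $\frac{C_4\nu}{1+\nu^2(\eta/k-t)^2}$ lives on $|t-\eta/k|\lesssim\nu^{-1}$) and, on the outer window $4\nu^{-1}\le|t-\eta/k|\le4\mu^{-1/3}$, against $-\frac{\partial_tM^5}{M^5}=\frac{t-\eta/k}{1+(\eta/k-t)^2}$, which is exactly the symbol of $\frac{\partial_{XY}^L}{\Delta_L}$ restricted to $\eta/k\ge0$ up to lower order. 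The off-support region $|t-\eta/k|\ge4\mu^{-1/3}$ (or $\eta/k<0$) is where $\Gamma^2\lesssim\mu^{2/3}(t-\eta/k)^{-2}$ is already small enough that $\frac{\partial_{XY}^L}{\Delta_L}$ is dominated by the enhanced dissipation $\mu^{1/3}$ in $\overline{CK}$. Collecting signs gives the differential inequality $\frac{d}{dt}\overline{E}+\frac{1}{100|\beta|}(\overline{D}+\overline{CK})\le0$, hence \eqref{linest} with the bars.

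From the energy inequality I would then extract the stated decay. Coercivity of $\overline{E}$ (from $|\beta|>1/2$, as recorded in the excerpt) gives $\|MZ\|_{L^2}^2+\|MQ\|_{L^2}^2\lesssim\overline{E}(0)\lesssim\|Z^{in}\|_{H^N}^2+\|Q^{in}\|_{H^N}^2$ after using $M^1,M^2,M^3,M^4\approx1$ \eqref{211}; the factor $A_k=e^{\delta_0\lambda^{1/3}t}$ inside $M$ together with $\lambda=\mu$ here yields the decay $e^{-\delta_0\mu^{1/3}t}$, and the only source of the amplification constant $\nu\mu^{-1/3}$ is the loss $|1/M^5_k|\lesssim1+\min\{\nu\langle t-\eta/k\rangle,\alpha\}\lesssim\nu\mu^{-1/3}$ from \eqref{216}, which converts $\|MZ\|_{L^2}$-control into $\|Z\|_{H^N}$-control at the cost of $\alpha=\nu\mu^{-1/3}$. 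This gives the first displayed bound. For the second, one passes from $(Z,Q)$ back to $(\Omega_{\neq},J_{\neq})$ through $\Omega_{\neq}=\Gamma^{-1}Z$, and $|\Gamma_k^{-1}|=|k|^{-1}\langle k,\eta-kt\rangle\lesssim\langle t\rangle\langle k,\eta\rangle$ on the nonzero modes, which is the familiar $\langle t\rangle$ loss; combined with $\|\omega^{in}_{\neq}\|_{H^N}\approx\|Z^{in}\|_{H^N}$ this produces $\|\Omega_{\neq}(t)\|_{H^N}+\|J_{\neq}(t)\|_{H^N}\lesssim\nu\mu^{-1/3}\langle t\rangle e^{-\delta_0\mu^{1/3}t}(\|\omega^{in}_{\neq}\|_{H^N}+\|j^{in}_{\neq}\|_{H^N})$. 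The main obstacle, and the place where care is genuinely required, is bookkeeping the sign and size of all the cross terms generated by $\partial_t$ hitting the $\overline{\chi}$-weighted correction and by the commutator of $M$ with $\frac{\partial_{XY}^L}{\Delta_L}$ — in particular verifying that the $M^4$ and $M^5$ bubbles, which are tuned to the two disjoint time windows, really do dominate the stretching symbol there (this is the content of the constant choices $C_4$ and of \eqref{216}), and that the leftover transient growth is confined to exactly the factor $\alpha=\nu\mu^{-1/3}$ rather than some larger power. Since this is the same mechanism already established in \cite{K24}, I would import those estimates and only indicate the modifications needed to accommodate the additional $M^2M^3$ factors; the full details are deferred to the appendix.
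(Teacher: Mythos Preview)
Your overall strategy --- mirror the proof of Proposition~\ref{linearest}, add the contributions of $M^4,M^5$ to the $CK$ side, and read off the decay from coercivity plus the $M^5$-loss \eqref{216} --- matches the paper's approach. Two points need correcting.

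First, you misidentify what $M^4$ is absorbing. On the support of $\overline{\chi}$ the stretching terms $\pm\frac{1}{2}\frac{\partial_tp}{p}$ are \emph{exactly cancelled} by the time derivative of the cross term $\frac{2}{\beta}\mathrm{Re}\langle\frac{\partial_Y^L\overline{\chi}}{\Delta_L}MZ,MQ\rangle$; that is the whole point of the correction and is unchanged from Proposition~\ref{linearest}. The new difficulty in the regime $\mu^{1/3}\le\nu$ is the error term $L_1=(\nu+\mu)\mathrm{Re}\langle\frac{\partial_tp_k}{2\beta ik}\overline{\chi}M_k\hat{Z}_k,M_k\hat{Q}_k\rangle$, which in Proposition~\ref{linearest} was absorbed because $\nu\lesssim\mu^{1/3}$ there, but here is too large. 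The paper absorbs it by observing that on $|t-\eta/k|\le4\nu^{-1}$ one has $\nu\lesssim\frac{\nu}{1+\nu^2(t-\eta/k)^2}=C_4^{-1}|\partial_tM^4/M^4|$, so $L_1\lesssim C_4^{-1}\|\sqrt{-\partial_tM^4/M^4}MQ\|^2+\frac{\nu}{10|\beta|}\|\sqrt{p}MZ\|^2$. Your description of $M^5$ handling $L_6$ on the outer window $4\nu^{-1}<t-\eta/k\le4\mu^{-1/3}$ is correct (note the one-sided window: only the $Q$-piece of $L_6$ is bad, and only when $t>\eta/k$).

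Second, your passage from $(Z,Q)$ to $(\Omega_{\neq},J_{\neq})$ via the pointwise bound $\Gamma^{-1}\lesssim\langle t\rangle\langle k,\eta\rangle$ loses a derivative: it yields $\|\Omega_{\neq}\|_{H^N}\lesssim\langle t\rangle\|Z\|_{H^{N+1}}$, not $\langle t\rangle\|Z\|_{H^N}$. The paper avoids this by running a separate energy estimate directly on $(M\Omega_{\neq},MJ_{\neq})$: the only uncontrolled linear term is $2\partial_{XY}^L\Delta_L^{-1}MJ_{\neq}$, which satisfies $|\langle 2\partial_{XY}^L\Delta_L^{-1}MJ_{\neq},MJ_{\neq}\rangle|\le2\|MQ\|_{L^2}\|MJ_{\neq}\|_{L^2}$, and integrating in time against the already-established bound on $\|MQ\|_{L^2}$ gives the $\langle t\rangle$ growth with no derivative loss.
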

	
	For the proof of Proposition \ref{linearest3}--\ref{linearest2}, see the appendix in Section \ref{append}.
	\section{Nonlinear Estimates}
	In this section, we consider the nonlinear problem. Recalling the linear results, we use the multipliers $\Gamma,M_5,M_6$ to control the growth caused by the stretching term $\partial_{XY}^L\Phi$. However, since these multipliers are not bounded below by any constant, they could cause trouble in the nonlinear analysis. To overcome this difficulty, we use commutator estimates for $\Gamma$ and $M$ to reduce the loss of derivatives (see Lemma \ref{lemcom} for details). Also we need to trade regularity with decay in time among $(U_{\neq},B_{\neq}), (Z,Q)$, and $(\Omega_{\neq},J_{\neq})$ (see more details in Lemma \ref{indap}). First, we introduce the energy functionals.
	\subsection{Energy functionals and the bootstrap argument}
		In the region $0<\nu\leq\mu^3\leq1$, we define the energy functionals of $(\Omega_{\neq},J_{\neq})$ and the zero-mode respectively
		
		\begin{align}
			&E(t)=\frac{1}{2}\left(\lVert M\Omega_{\neq}\rVert^2_{L^2}+\left\lVert MJ_{\neq}\right\rVert^2_{L^2}+\frac{2}{\beta}Re\left\langle\frac{\partial_Y^L\chi}{ \Delta_L}M\Omega_{\neq},MJ_{\neq}\right\rangle\right),\label{nonef4}\\
			&E_{0}(t)=\frac{1}{2}\left(\left\lVert U^1_0\right\rVert^2_{H^N_{Y}}+\left\lVert B^1_0\right\rVert^2_{H^N_{Y}}+\frac{1}{\langle t\rangle^2}\big(\left\lVert \Omega_0\right\rVert^2_{H^N_{Y}}+\left\lVert J_0\right\rVert^2_{H^N_{Y}}\big)\right),\label{nonef3}
		\end{align}
		the associated dissipation functionals
		\begin{align}
			&D(t)=\nu \lVert \nabla_LM\Omega_{\neq}\rVert^2_{L^2}+\mu \lVert \nabla_L MJ_{\neq}\rVert^2_{L^2},\label{nondf4}\\
			&D_0(t)=\nu\left\lVert \partial_YU^1_0\right\rVert^2_{H^N_{Y}}+\mu\left\lVert \partial_YB^1_0\right\rVert^2_{H^N_{Y}}+\frac{1}{\langle t\rangle^2}\big(\nu\left\lVert \partial_Y\Omega_0\right\rVert^2_{H^N_{Y}}+\mu\left\lVert \partial_YJ_0\right\rVert^2_{H^N_{Y}}\big),\label{nondf3}
		\end{align}
		and the Cauchy-Kovalevskaya term
		\begin{align}
			&CK(t)=\sum_{F\in\{\Omega_{\neq},J_{\neq}\}}\left(\left\lVert\Gamma MF\right\rVert^2_{L^2}+\lambda^{\frac{1}{3}}\left\lVert |\partial_X|^{\frac{1}{3}}MF\right\rVert^2_{L^2}\right),\label{nonck4}
		\end{align}
		where $\chi,\Gamma$ given by \eqref{chii} and \eqref{defgamma}. A direct computation gives the following lemma.
	\begin{lemma}
		Assume $0<\nu\leq\mu^3\leq1$, $|\beta|>1/2$, and $N\geq4$. Let $E,E_0,D,D_0,$ and $CK$ be defined in \eqref{nonef4}-\eqref{nonck4} respectively. Then we have
		\begin{align*}
			&\frac{\mathrm{d}}{\mathrm{d}t}E+\frac{1}{100|\beta|}(D+CK)\leq T+N+S,
			\\&\frac{\mathrm{d}}{\mathrm{d}t}E_{0}+\frac{1}{100|\beta|}D_{0}\leq R,
		\end{align*}
		where the transport terms $T$ with $N$, and stretching term $S$ for the $(\Omega_{\neq},J_{\neq})$ system are given by
		\begin{equation*}
			\begin{aligned}
				T=&Re\left\langle M(U\cdot\nabla_{L}\Omega)_{\neq},M\Omega_{\neq}\right\rangle+Re\left\langle M(U\cdot\nabla_{L}J)_{\neq},MJ_{\neq}\right\rangle\\
				&+Re\big(\left\langle M(B\cdot\nabla_{L}J)_{\neq},M\Omega_{\neq}\right\rangle+\left\langle M(B\cdot\nabla_{L}\Omega)_{\neq},MJ_{\neq}\right\rangle\big),
			\end{aligned}
		\end{equation*}
		\begin{equation*}
			\begin{aligned}
				N=&\left|\left\langle M(U\cdot\nabla_{L}\Omega)_{\neq},\frac{2\partial_Y^L}{\beta \Delta_L}MJ_{\neq}\right\rangle\right|+\left|\left\langle M(U\cdot\nabla_{L}J)_{\neq},\frac{2\partial_Y^L}{\beta \Delta_L}M\Omega_{\neq}\right\rangle\right|\\
				&+\left|\left\langle M(B\cdot\nabla_{L}J)_{\neq},\frac{2\partial_Y^L}{\beta \Delta_L}MJ_{\neq}\right\rangle\right|
				+\left|\left\langle M(B\cdot\nabla_{L}\Omega)_{\neq},\frac{2\partial_Y^L}{\beta \Delta_L}M\Omega_{\neq}\right\rangle\right|,
			\end{aligned}
		\end{equation*}
		and
		\begin{equation*}
			\begin{aligned} 
				S= &\left|\left\langle M\left(\left(\frac{2\partial_{XY}^L}{\Delta_L} J\right) \left(\Omega-\frac{2\partial_{XX}}{\Delta_L} \Omega\right)\right)_{\neq}, \left(M J+\frac{2\partial_Y^L}{\beta\Delta_L}M\Omega\right)_{\neq}\right\rangle\right|\\
				&+\left|\left\langle M\left(\left(\frac{2\partial_{XY}^L}{\Delta_L} \Omega\right) \left(J-\frac{2\partial_{XX}}{\Delta_L} J\right)\right)_{\neq},\left(M J+\frac{2\partial_Y^L}{\beta\Delta_L}M\Omega\right)_{\neq}\right\rangle\right|
				.\end{aligned}
		\end{equation*}
		The nonlinear term $R$ for the zero-mode functional is given by
		\begin{equation}\label{zeror}
			\begin{aligned}
				R&=\left|\left\langle\left\langle\partial_Y\right\rangle^N\left(U_{\neq}^2 U_{\neq}^1\right)_0,\left\langle\partial_Y\right\rangle^N\left(\partial_Y U_0^1\right)\right\rangle_{L^2_Y}\right|+\left|\left\langle\left\langle\partial_Y\right\rangle^N\left(B_{\neq}^2 B_{\neq}^1\right)_0,\left\langle\partial_Y\right\rangle^N\left(\partial_Y U_0^1\right)\right\rangle_{L^2_Y}\right| \\
				&+\left|\left\langle\left\langle\partial_Y\right\rangle^N\left(U_{\neq}^2 B_{\neq}^1\right)_0,\left\langle\partial_Y\right\rangle^N\left(\partial_Y B_0^1\right)\right\rangle_{L^2_Y}\right|+\left|\left\langle\left\langle\partial_Y\right\rangle^N\left(B_{\neq}^2 U_{\neq}^1\right)_0,\left\langle\partial_Y\right\rangle^N\left(\partial_Y B_0^1\right)\right\rangle_{L^2_Y}\right| \\
				&\begin{aligned}
					+\frac{1}{\langle t\rangle^2}&\left(\left|\left\langle\left\langle\partial_Y\right\rangle^N\left(U_{\neq}^2 \Omega_{\neq}\right)_0,\left\langle\partial_Y\right\rangle^N\left(\partial_Y \Omega_0\right)\right\rangle_{L^2_Y}\right|+\left|\left\langle\left\langle\partial_Y\right\rangle^N\left(B_{\neq}^2 J_{\neq}\right)_0,\left\langle\partial_Y\right\rangle^N\left(\partial_Y \Omega_0\right)\right\rangle_{L^2_Y} \right| \right.\\
					&+\left|\left\langle\left\langle\partial_Y\right\rangle^N\left(U_{\neq}^2 J_{\neq}\right)_0,\left\langle\partial_Y\right\rangle^N\left(\partial_Y J_0\right)\right\rangle_{L^2_Y}\right|+\left|\left\langle\left\langle\partial_Y\right\rangle^N\left(B_{\neq}^2 \Omega_{\neq}\right)_0,\left\langle\partial_Y\right\rangle^N\left(\partial_Y J_0\right)\right\rangle_{L^2_Y}\right| \\
					&+\left|\left\langle\langle\partial_Y\rangle^N\left(2 \partial_X B_{\neq}^1\left(\Omega_{\neq}-2 \partial_{X X} \Delta_L^{-1} J_{\neq}\right)\right)_0,\langle\partial_Y\rangle^N J_0\right\rangle_{L^2_Y}\right| \\
					&\left.+\left|\left\langle\langle\partial_Y\rangle^N\left(2 \partial_X U_{\neq}^1\left(J_{\neq}-2 \partial_{X X} \Delta_L^{-1} J_{\neq}\right)\right)_0,\langle\partial_Y\rangle^N J_0\right\rangle_{L^2_Y}\right|\right).\end{aligned}
			\end{aligned}
		\end{equation}
	\end{lemma}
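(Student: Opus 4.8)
The plan is to establish both differential inequalities by direct energy estimates, using the purely linear computations already carried out in Proposition~\ref{linearest3} as a black box. The point is that the only genuinely new terms are the quadratic contributions of $\mathrm{NL}^\Omega$ and $\mathrm{NL}^J$; after differentiation, Proposition~\ref{linearest3} says exactly that the linear part of $\frac{\mathrm d}{\mathrm dt}E$ is $\le-\frac{1}{100|\beta|}(D+CK)$, so it suffices to check that the leftover terms are controlled by $T+N+S$.

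For $E$ in \eqref{nonef4}: project the $(\Omega,J)$ system onto nonzero modes (the Couette background and the $\beta\partial_X$ coupling preserve this splitting, so $\Omega_{\neq},J_{\neq}$ solve the same system with forcing $\mathrm{NL}^\Omega_{\neq},\mathrm{NL}^J_{\neq}$), then apply the Fourier multiplier $M$, which commutes with $\partial_X,\partial_Y^L,\Delta_L$ and generates the extra term $\frac{\partial_tM}{M}MF$. Now differentiate $E$. In $\frac{\mathrm d}{\mathrm dt}(\frac12\|M\Omega_{\neq}\|_{L^2}^2+\frac12\|MJ_{\neq}\|_{L^2}^2)$ the $\beta\partial_X$ cross terms cancel by skew-adjointness of $\partial_X$; the diffusion yields $-D$; the term $2\partial_{XY}^L\Phi$ yields $-2Re\langle\frac{\partial_{XY}^L}{\Delta_L}MJ_{\neq},MJ_{\neq}\rangle$, which, together with the terms produced by $\frac{\mathrm d}{\mathrm dt}(\frac1\beta Re\langle\frac{\partial_Y^L\chi}{\Delta_L}M\Omega_{\neq},MJ_{\neq}\rangle)$ (among them the distributional $\partial_t\chi$ piece and the $\partial_t(\partial_Y^L\Delta_L^{-1})$ piece), is absorbed via \eqref{propm} for $M^1,M^2,M^3,M^6$ and the weight $A$ into $-\frac{1}{100|\beta|}(D+CK)$ --- this being precisely Proposition~\ref{linearest3}. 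What remains on the right is $Re\langle M\mathrm{NL}^\Omega_{\neq},M\Omega_{\neq}\rangle+Re\langle M\mathrm{NL}^J_{\neq},MJ_{\neq}\rangle+\frac1\beta Re\langle\frac{\partial_Y^L\chi}{\Delta_L}M\mathrm{NL}^\Omega_{\neq},MJ_{\neq}\rangle+\frac1\beta Re\langle\frac{\partial_Y^L\chi}{\Delta_L}M\Omega_{\neq},M\mathrm{NL}^J_{\neq}\rangle$. Split $\mathrm{NL}^\Omega,\mathrm{NL}^J$ from \eqref{nlterm} into transport parts ($-U\cdot\nabla_L\Omega+B\cdot\nabla_LJ$ and $-U\cdot\nabla_LJ+B\cdot\nabla_L\Omega$) and the stretching part (the last two terms of $\mathrm{NL}^J$, rewritten through $\Psi=\Delta_L^{-1}\Omega$, $\Phi=\Delta_L^{-1}J$). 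Using that $\frac{\partial_Y^L\chi}{\Delta_L}$ is a uniformly bounded multiplier on the support of $\chi$, the transport pieces tested against $(M\Omega_{\neq},MJ_{\neq})$ are dominated by $T$, those tested against the cross weight $\frac{2\partial_Y^L}{\beta\Delta_L}M(\cdot)$ by $N$, and the stretching pieces --- collecting the two copies tested against $MJ_{\neq}$ and against $\frac{2\partial_Y^L}{\beta\Delta_L}M\Omega_{\neq}$ --- by $S$. This gives the first inequality.

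Next, for $E_0$ in \eqref{nonef3}: derive the zero-mode equations. For $\Omega_0,J_0$ read them off the $(\Omega,J)$ system --- at $k=0$ the terms $\beta\partial_XJ$, $2\partial_{XY}^L\Phi$ and the multiplier growth all vanish, leaving heat equations forced by $(\mathrm{NL}^\Omega)_0$, $(\mathrm{NL}^J)_0$ --- and for $U^1_0,B^1_0$ use the original momentum and induction equations, where at $k=0$ the background terms $y\partial_x$, $\beta\partial_x$ drop and $\partial_xp$ has no zero mode. The structural facts that make $R$ appear are $U^2_0=B^2_0=0$ (since $U^2=\partial_X\Psi$, $B^2=\partial_X\Phi$) and incompressibility $\nabla_L\cdot U=\nabla_L\cdot B=0$, which let every quadratic zero-mode term be written as $\partial_Y$ of a product of nonzero modes, e.g. $(U\cdot\nabla_LU^1)_0=\partial_Y(U^2_{\neq}U^1_{\neq})_0$ and $(U\cdot\nabla_L\Omega)_0=\partial_Y(U^2_{\neq}\Omega_{\neq})_0$. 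Differentiating $E_0$: the weight derivative $\partial_t\langle t\rangle^{-2}=-2t\langle t\rangle^{-4}$ contributes nonpositive terms that are discarded; the diffusion contributes $-\nu\|\partial_YU^1_0\|_{H^N_Y}^2-\mu\|\partial_YB^1_0\|_{H^N_Y}^2-\frac1{\langle t\rangle^2}(\nu\|\partial_Y\Omega_0\|_{H^N_Y}^2+\mu\|\partial_YJ_0\|_{H^N_Y}^2)$, which dominates $\frac1{100|\beta|}D_0$; and integrating by parts in $Y$ to move the outer $\partial_Y$ onto $\langle\partial_Y\rangle^N\partial_Y(\cdot)$ of the test function produces exactly the eight bilinear terms collected in \eqref{zeror}.

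The only delicate point in an otherwise routine computation is the linear part of the first estimate --- the sign of the symmetrized stretching symbol $\frac{\partial_{XY}^L}{\Delta_L}$, the distributional $\partial_t\chi$ contribution, and the interplay with $M^6$ and the enhanced-dissipation weight $A$ --- but, as noted, this is exactly the computation behind Proposition~\ref{linearest3} and here reduces to bookkeeping. On the nonlinear side there is no real obstacle beyond organizing the four bilinear couplings into $T$, $N$, $S$ and, for $E_0$, recognizing the $\partial_Y$-divergence structure of every quadratic term.
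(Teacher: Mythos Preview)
Your proposal is correct and matches the paper's approach: the paper itself offers no proof beyond ``A direct computation gives the following lemma,'' and your sketch is exactly that computation---invoke Proposition~\ref{linearest3} as a black box for the linear part, then collect the nonlinear forcing terms $\mathrm{NL}^\Omega_{\neq},\mathrm{NL}^J_{\neq}$ tested against $M\Omega_{\neq},MJ_{\neq}$ and the cross weight $\frac{\partial_Y^L\chi}{\Delta_L}M(\cdot)$ into $T,N,S$, and for $E_0$ exploit $U^2_0=B^2_0=0$ plus incompressibility to write every quadratic zero-mode term in divergence form. One cosmetic point: the signs in the definition of $T$ do not literally match those coming from $\mathrm{NL}^\Omega=-U\cdot\nabla_L\Omega+B\cdot\nabla_LJ$, but since the subsequent estimates in the paper bound each piece of $T$ in absolute value via commutators, this is harmless bookkeeping rather than a gap.
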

	
	In the region $0<\mu^3\leq\nu\leq1$, the energy functionals of $(Z,Q)$ and $(\Omega,J)$ are given by	
		\begin{align}
		&\overline{E}(t)=\frac{1}{2}\left(\lVert MZ\rVert^2_{L^2_{X,Y}}+\left\lVert MQ\right\rVert^2_{L^2_{X,Y}}+\frac{2}{\beta}Re\left\langle\frac{\partial_Y^L\chi{\chi}}{ \Delta_L}MZ,MQ\right\rangle_{L^2_{X,Y}} \right),\label{nonef1}\\
		&\widetilde{E}(t)=\frac{1}{2}\left(\lVert M\Omega\rVert^2_{L^2_{X,Y}}+\left\lVert MJ\right\rVert^2_{L^2_{X,Y}}\right),\label{nonef2}
	\end{align}
	where $\overline{\chi}$ satisfies \eqref{defchi}. The dissipation and the CK terms are defined as
	\begin{align}
		&\overline D(t)=\nu \lVert \nabla_LMZ\rVert^2_{L^2_{X,Y}}+\mu \lVert \nabla_L MQ\rVert^2_{L^2_{X,Y}},\label{nondf1}\\
		&\widetilde{D}(t)=\nu \lVert \nabla_LM\Omega\rVert^2_{L^2_{X,Y}}+\mu \lVert \nabla_L MJ\rVert^2_{L^2_{X,Y}},\label{nondf2}\\
		&\overline{CK}(t)=\sum_{F\in\{Z,Q\}}\left(\left\lVert\Gamma MF\right\rVert_{L^2_{X,Y}}+\lambda^{\frac{1}{3}}\left\lVert |\partial_X|^{\frac{1}{3}}MF\right\rVert^2_{L^2_{X,Y}}\right),\label{nonckf1}\\
		&\widetilde{CK}(t)=\sum_{G\in\{\Omega,J\}}\left(\left\lVert\Gamma MG\right\rVert_{L^2_{X,Y}}+\lambda^{\frac{1}{3}}\left\lVert |\partial_X|^{\frac{1}{3}}MG\right\rVert^2_{L^2_{X,Y}}\right).\label{nonckf2}
	\end{align}
For the region $0<\mu^3\leq\nu\leq1$, we recall a lemma from \cite{D24}.  
	\begin{lemma}[\cite{D24}]
		Assume $0<\mu^3\leq\nu\leq1$, $|\beta|>1/2$, and $N\geq4$. Let $\overline{E},\widetilde{E},\overline{D},\widetilde{D},\overline{CK},\widetilde{CK},E_0$, and $D_0$ be defined in \eqref{nonef1}--\eqref{nonckf2}, \eqref{nonef3}, and \eqref{nondf3} respectively. Then we have
		\begin{align}
			&\frac{\mathrm{d}}{\mathrm{d}t}\overline{E}+\frac{1}{100|\beta|}(\overline{D}+\overline{CK})\leq \overline{T}+\overline{N}+\overline{S},\label{sym}\\
			&\frac{\mathrm{d}}{\mathrm{d}t}\widetilde{E}+\frac{1}{100|\beta|}(\widetilde{D}+\widetilde{CK})\leq 4\sqrt{\overline{E}}\sqrt{\widetilde{E}}+\widetilde{T}+\widetilde{S},\label{ho}
			\\&\frac{\mathrm{d}}{\mathrm{d}t}E_{0}+\frac{1}{100|\beta|}D_{0}\leq R,\label{zero}
		\end{align}
		where the transport terms $\overline{T}$ with $\overline{N}$, and stretching term $\overline{S}$ for $(Z,Q)$ system are given by
		\begin{equation}\label{tran}
			\begin{aligned}
				\overline{T}=&Re\left\langle\Gamma M(U\cdot\nabla_{L}\Omega),MZ\right\rangle_{L^2_{X,Y}}+Re\left\langle\Gamma M(U\cdot\nabla_{L}J),MQ\right\rangle_{L^2_{X,Y}}\\
				&+Re\left(\left\langle\Gamma M(B\cdot\nabla_{L}J),MZ\right\rangle_{L^2_{X,Y}}+\left\langle\Gamma M(B\cdot\nabla_{L}\Omega),MQ\right\rangle_{L^2_{X,Y}}\right),
			\end{aligned}
		\end{equation}
		\begin{equation}\label{trann}
			\begin{aligned}
				\overline{N}=&\left|\left\langle\Gamma M(U\cdot\nabla_{L}\Omega),\frac{2\partial_Y^L}{\beta \Delta_L}MQ\right\rangle_{L^2_{X,Y}}\right|+\left|\left\langle\Gamma M(U\cdot\nabla_{L}J),\frac{2\partial_Y^L}{\beta \Delta_L}MZ\right\rangle_{L^2_{X,Y}}\right|\\
				&+\left|\left\langle\Gamma M(B\cdot\nabla_{L}J),\frac{2\partial_Y^L}{\beta \Delta_L}MQ\right\rangle_{L^2_{X,Y}}\right|
				+\left|\left\langle\Gamma M(B\cdot\nabla_{L}\Omega),\frac{2\partial_Y^L}{\beta \Delta_L}MZ\right\rangle_{L^2_{X,Y}}\right|,
			\end{aligned}
		\end{equation}
		and
		\begin{equation}\label{stechs}
			\begin{aligned} 
				\overline{S}= & \left|\left\langle\Gamma M\left(\left(\frac{2\partial_{XY}^L}{\Delta_L} J\right) \left(\Omega- \frac{2\partial_{XX}}{\Delta_L}\Omega\right)\right), MQ+\frac{2\partial_Y^L}{\beta \Delta_L} MZ\right\rangle_{L^2_{X,Y}}\right| \\ 
				& +\left|\left\langle\Gamma M\left(\left(\frac{2\partial^L_{XY}}{\Delta_L} \Omega\right) \left(J- \frac{2\partial_{XX}}{\Delta_L} J\right)\right), MQ+ \frac{2\partial_Y^L}{\beta \Delta_L} M Z\right\rangle_{L^2_{X,Y}}\right|.
			\end{aligned}
		\end{equation}
		While, the transport $\widetilde{T}$ and stretching term $\widetilde{S}$ for $(\Omega,J)$ system are
			\begin{equation}\label{omegtran}
			\begin{aligned}
				\widetilde{T}=&Re\left\langle M(U\cdot\nabla_{L}\Omega),M\Omega\right\rangle_{L^2_{X,Y}}+Re\left\langle M(U\cdot\nabla_{L}J),MJ\right\rangle_{L^2_{X,Y}}\\
				&+Re\big(\left\langle M(B\cdot\nabla_{L}J),M\Omega\right\rangle_{L^2_{X,Y}}+\left\langle M(B\cdot\nabla_{L}\Omega),MJ\right\rangle_{L^2_{X,Y}}\big),
			\end{aligned}
		\end{equation}
		and
		\begin{equation}\label{omegstr}
			\begin{aligned} 
				\widetilde{S}= \left|\left\langle M\left(\left(\frac{2\partial_{XY}^L}{\Delta_L} J\right) \left(\Omega-\frac{2\partial_{XX}}{\Delta_L} \Omega\right)\right), M J\right\rangle_{L^2_{X,Y}}\right| 
				+\left|\left\langle M\left(\left(\frac{2\partial_{XY}^L}{\Delta_L} \Omega\right) \left(J-\frac{2\partial_{XX}}{\Delta_L} J\right)\right), M J\right\rangle_{L^2_{X,Y}}\right|
			.\end{aligned}
		\end{equation}
		Finally, the nonlinear term $R$ for the zero-mode functional is given by \eqref{zeror}.
	\end{lemma}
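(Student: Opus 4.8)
All three inequalities are energy identities recast as inequalities: one differentiates the relevant functional in $t$, substitutes the evolution equations, separates the \emph{linear} contributions (which are already handled by the linear theory of Section 3) from the \emph{nonlinear} ones, and reassembles the leftover into the quantities appearing on the right. I would treat the three bounds \eqref{sym}, \eqref{ho}, \eqref{zero} independently.

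\textbf{The $(Z,Q)$ inequality \eqref{sym}.} Since $M$, $\Gamma$, $\partial_{XY}^L/\Delta_L$ and $\partial_Y^L\overline\chi/\Delta_L$ are all Fourier multipliers in $(k,\eta)$ they mutually commute, so by \eqref{ZQeq} the pair $(MZ,MQ)$ solves a system of the shape \eqref{mequation} whose right-hand sides are $\Gamma M\,\mathrm{NL}^\Omega$ and $\Gamma M\,\mathrm{NL}^J$. Differentiating $\overline E$ and inserting this system, every contribution except the nonlinear forcings is literally one of the terms estimated in the proof of Proposition \ref{linearest} (or \ref{linearest2} when $\mu^{1/3}\le\nu$): the $\beta$-coupling $-\beta\partial_X MQ$, $-\beta\partial_X MZ$ is cancelled against the time derivative of the cross term by antisymmetry of $\partial_X$; the diffusion gives $-\overline D$; the factor $\partial_tM/M$ gives $-\overline{CK}$ via \eqref{213}, the identity $-\partial_tM^1/M^1=C_1\Gamma^2$, $\delta_0\le(100|\beta|)^{-1}$, and the diffusion at high frequency (the growth $\delta_0\lambda^{1/3}$ of $A$ being absorbed into an $O(1)$ fraction of the $\lambda^{1/3}\||\partial_X|^{1/3}M\cdot\|^2$ term, which dominates it on nonzero modes); and the stretching terms $\pm(\partial_{XY}^L/\Delta_L)(MZ,MQ)$ are absorbed through $\overline\chi$ and, when $\mu^{1/3}\le\nu$, the multipliers $M^4,M^5$, exactly as in the appendix. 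What survives on the right is $Re\langle\Gamma M\,\mathrm{NL}^\Omega,MZ\rangle+Re\langle\Gamma M\,\mathrm{NL}^J,MQ\rangle$ together with the two cross-term contributions $\tfrac2\beta Re\langle(\partial_Y^L\overline\chi/\Delta_L)\Gamma M\,\mathrm{NL}^\Omega,MQ\rangle$ and $\tfrac2\beta Re\langle(\partial_Y^L\overline\chi/\Delta_L)MZ,\Gamma M\,\mathrm{NL}^J\rangle$. Inserting the formulas \eqref{nlterm} for $\mathrm{NL}^\Omega,\mathrm{NL}^J$, moving the skew-adjoint multiplier $\partial_Y^L\overline\chi/\Delta_L$ onto the forcing, and using $|\partial_Y^L\overline\chi/\Delta_L|\le|\partial_Y^L/\Delta_L|$ pointwise, the four transport/field quartets reassemble into $\overline T+\overline N$ as in \eqref{tran}--\eqref{trann}, and the two genuinely MHD stretching-nonlinearity pieces reassemble into $\overline S$ as in \eqref{stechs}.

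\textbf{The $(\Omega,J)$ inequality \eqref{ho}.} Here no multiplier $\Gamma$ is attached and $\widetilde E$ has no cross term, so the computation is that for the symmetric heat-type system with forcings $M\,\mathrm{NL}^\Omega$, $M\,\mathrm{NL}^J$: the $\beta$-coupling cancels by antisymmetry of $\partial_X$, the diffusion gives $-\widetilde D$, the factor $\partial_tM/M$ gives $-\widetilde{CK}$ as above, and $M\,\mathrm{NL}^\Omega$, $M\,\mathrm{NL}^J$ reassemble into $\widetilde T+\widetilde S$ (transport/field into $\widetilde T$, bracketed stretching nonlinearity into $\widetilde S$). The only genuinely new point is the \emph{linear} stretching term $|Re\langle M\,2\partial_{XY}^L\Phi,MJ\rangle|$: writing $\Phi=\Delta_L^{-1}J=\Delta_L^{-1}\Gamma^{-1}Q$ and noting that $2\partial_{XY}^L\Delta_L^{-1}\Gamma^{-1}$ is the Fourier multiplier of symbol $2\,\mathrm{sgn}(k)(\eta-kt)(k^2+(\eta-kt)^2)^{-1/2}$, of modulus $\le2$, one gets $\|M\,2\partial_{XY}^L\Phi\|_{L^2}\le2\|MQ\|_{L^2}$, so by coercivity of $\overline E$ and $\widetilde E$ this term is $\le 4\sqrt{\overline E}\sqrt{\widetilde E}$ (absorbing the coercivity constants). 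This is exactly the coupling responsible for the $\langle t\rangle$-discrepancy between $(\Omega_{\neq},J_{\neq})$ and $(Z,Q)$ noted in the introduction.

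\textbf{The zero-mode inequality \eqref{zero}.} At $k=0$ the magnetic coupling and the stretching term drop out; from the perturbed momentum/induction equations and the vorticity/current equations, using incompressibility and $U^2_0=B^2_0=0$, one obtains $\partial_t U^1_0-\nu\partial_{YY}U^1_0=-\partial_Y(U^2_{\neq}U^1_{\neq})_0+\partial_Y(B^2_{\neq}B^1_{\neq})_0$, $\partial_t B^1_0-\mu\partial_{YY}B^1_0=-\partial_Y(U^2_{\neq}B^1_{\neq})_0+\partial_Y(B^2_{\neq}U^1_{\neq})_0$, $\partial_t\Omega_0-\nu\partial_{YY}\Omega_0=-\partial_Y(U^2_{\neq}\Omega_{\neq})_0+\partial_Y(B^2_{\neq}J_{\neq})_0$, and $\partial_t J_0-\mu\partial_{YY}J_0=-\partial_Y(U^2_{\neq}J_{\neq})_0+\partial_Y(B^2_{\neq}\Omega_{\neq})_0+\big(2\partial_{XY}^L\Phi(\Omega-2\partial_{XX}\Psi)-2\partial_{XY}^L\Psi(J-2\partial_{XX}\Phi)\big)_0$, the last bracket being, via $2\partial_{XY}^L\Phi=-2\partial_X B^1_{\neq}$ and $2\partial_{XY}^L\Psi=-2\partial_X U^1_{\neq}$, a sum of products of nonzero modes. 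Differentiating $E_0$, the diffusion gives $-D_0$, the time derivative of the weight $\langle t\rangle^{-2}$ contributes $-\tfrac{t}{\langle t\rangle^4}(\|\Omega_0\|^2_{H^N_Y}+\|J_0\|^2_{H^N_Y})\le0$ which is discarded, and integrating the $\partial_Y$ by parts in the nonlinear inner products produces exactly $R$ in \eqref{zeror}.

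\textbf{Main obstacle.} Most of this is bookkeeping: routing the $\partial_Y^L/\Delta_L$-weighted cross-term contributions correctly into $\overline N,\overline S$ and checking that the displayed expressions are genuine upper bounds (this is where one drops $\overline\chi$ and uses skew-adjointness). The one place where new analysis enters, rather than the linear theory plus rearrangement, is the $4\sqrt{\overline E}\sqrt{\widetilde E}$ bound, i.e. recognizing that $2\partial_{XY}^L\Phi$ equals $Q$ modulo a bounded symbol; everything else is a direct computation built on Propositions \ref{linearest3}--\ref{linearest2}.
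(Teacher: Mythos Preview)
Your proposal is correct and matches how the lemma is actually proved (in \cite{D24}, to which the present paper simply refers). The paper does not supply its own argument: it states the lemma as recalled from \cite{D24} and only remarks that the range $0<\mu^3\le\nu\le\mu\le1$ treated there extends to $0<\mu^3\le\nu\le1$ by the same computation. Your sketch---differentiate each functional, feed in the evolution equations, invoke the linear estimates of the appendix (Propositions \ref{linearest}--\ref{linearest2}) to absorb all linear contributions into $-\tfrac{1}{100|\beta|}(\overline D+\overline{CK})$, etc., and collect the nonlinear forcings into $\overline T,\overline N,\overline S,\widetilde T,\widetilde S,R$---is exactly the structure of that computation. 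Your identification of the one nontrivial step in \eqref{ho}, namely bounding the linear stretching $2\partial_{XY}^L\Phi$ by $2\|MQ\|_{L^2}$ via the symbol $2\,\mathrm{sgn}(k)(\eta-kt)/\sqrt{p}$, is precisely the mechanism behind the $4\sqrt{\overline E}\sqrt{\widetilde E}$ coupling (and is the same observation used in the paper's proof of \eqref{omgbd} in the appendix). The only cosmetic point is that the constant $4$ in \eqref{ho} tacitly absorbs the $\beta$-dependent coercivity factor of $\overline E$; this is harmless since any fixed constant suffices for the bootstrap closure in \S\ref{case1}.
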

	\begin{remark}
		Actually the lemma in \cite{D24} prove the above result only for $0<\mu^3\leq\nu\leq\mu\leq1$. By exactly the same argument, we may extend it to $0<\mu^3\leq\nu\leq1$.
	\end{remark}
	
	Now we establish the bootstrap argument. By a standard local well-posedness argument, we state the following lemma without showing more details. 
	\begin{lemma}\label{st}
		Assume $|\beta|>1/2$ and $N\geq4$. There exists $t_0\ll1$ independent of $\nu,\mu$ such that if $\|(\omega^{in},j^{in})\|_{H^N}\leq\epsilon$, then there holds
		\begin{equation*}
			\sup_{t\in[0,2t_0]}\max\{E(t),\overline{E}(t),\widetilde{E}(t),E_0(t)\}\leq2\epsilon^2.
		\end{equation*}
	\end{lemma}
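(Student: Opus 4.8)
\textbf{Proof proposal for Lemma \ref{st}.}

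The plan is to run a standard Picard-type local existence argument in the original coordinates $(x,y)$ for the system \eqref{mhd}, observe that the energy functionals $E,\overline{E},\widetilde{E},E_0$ are all controlled (up to the bounded multipliers $M^i\approx1$, $i=1,2,3,4$, and the coercivity inequalities for the cross terms with factor $\frac{1}{\beta}$, valid since $|\beta|>1/2$) by $\|(\omega,j)(t)\|_{H^N}^2$ plus low-regularity norms of $(U,B)$, and then use continuity in time of these quantities starting from $\|(\omega^{in},j^{in})\|_{H^N}\leq\epsilon$ to conclude the claimed bound on a short time interval $[0,2t_0]$ with $t_0$ depending only on $N$ and $\beta$ (not on $\nu,\mu$, since viscosity and resistivity only help).

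First I would establish local well-posedness: for $N\geq 4$, $H^N(\mathbb{T}\times\mathbb{R})$ is an algebra and embeds into $W^{1,\infty}$, so the nonlinearities $\mathrm{NL}^\omega,\mathrm{NL}^j$ in \eqref{mhd} — which are bilinear in $(u,b,\nabla u,\nabla b,\omega,j)$ composed with the order-zero or order-$(-1)$ operators $\partial_{xy}\Delta^{-1}$, $\partial_{xx}\Delta^{-1}$, etc. — map $H^N\times H^N$ continuously into $H^{N-1}$, with the additional stretching terms in $\mathrm{NL}^j$ involving only bounded Fourier multipliers applied to $H^N$ functions. A contraction-mapping / energy-method argument then gives a unique solution on a maximal interval, with the $H^N$ norm of $(\omega,j)$ satisfying a differential inequality of the form $\frac{d}{dt}\|(\omega,j)\|_{H^N}\lesssim \|(\omega,j)\|_{H^N}^2$ (the linear transport term $y\partial_x$ and the stretching term $2\partial_{xy}\phi$ are handled by the usual commutator/integration-by-parts estimates, and $-\nu\Delta,-\mu\Delta$ only contribute with a favorable sign). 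Hence there is $t_0=t_0(N,\beta)\ll1$, independent of $\nu,\mu$, with $\sup_{[0,2t_0]}\|(\omega,j)(t)\|_{H^N}\leq\tfrac{3}{2}\epsilon$, say.

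Next I would transfer this to the energy functionals. Since $X=x-yt,\,Y=y$ is a volume-preserving change of variables, $\|\Omega(t)\|_{H^N_{X,Y}}$ and $\|J(t)\|_{H^N_{X,Y}}$ are comparable to $\|\omega(t,x+yt,y)\|_{H^N}$ and $\|j(\cdot)\|_{H^N}$, which on the bounded interval $[0,2t_0]$ (where $t$ is order one) are in turn comparable to $\|\omega(t)\|_{H^N},\|j(t)\|_{H^N}$ up to a constant depending only on $t_0$, hence only on $N,\beta$. By \eqref{211} the multipliers $M^1,M^2,M^3,M^4\approx1$ and $A_k=e^{\delta_0\lambda^{1/3}t}\leq e^{2\delta_0 t_0}$ is bounded on $[0,2t_0]$, so $\|M\Omega_{\neq}\|_{L^2},\|MJ_{\neq}\|_{L^2}\lesssim\|(\omega,j)\|_{H^N}$; the cross terms in $E,\overline{E}$ are bounded by $\frac{1}{|\beta|}$ times the product of the two $L^2$ norms and the coercivity bounds in the excerpt give $E(t),\overline{E}(t),\widetilde{E}(t)\lesssim\|(\omega,j)(t)\|_{H^N}^2$. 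For $\Gamma=\sqrt{\partial_{XX}/\Delta_L}$ note $|\Gamma_k(t,\eta)|\leq1$ so $\|MZ\|_{L^2}\leq\|M\Omega\|_{L^2}$, etc. For $E_0$: $\|U^1_0\|_{H^N_Y},\|B^1_0\|_{H^N_Y}\lesssim\|\Omega_0\|_{H^{N-1}_Y},\|J_0\|_{H^{N-1}_Y}\lesssim\|(\omega,j)\|_{H^N}$ (the zero mode of $u^1,b^1$ equals $-\partial_y\Delta^{-1}$ acting on $\omega_0,j_0$ restricted to zero $x$-frequency, i.e. $-\partial_y^{-1}$ which at worst loses no derivative on the zero mode in this norm combination), and the $\langle t\rangle^{-2}(\|\Omega_0\|_{H^N_Y}^2+\|J_0\|_{H^N_Y}^2)$ piece is $\lesssim\|(\omega,j)\|_{H^N}^2$ directly. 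Choosing $t_0$ small enough that all the implied constants times $(\tfrac{3}{2})^2$ are $\leq 2$ finishes the proof.

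The main obstacle is purely bookkeeping rather than conceptual: one must verify that the construction interval $t_0$ and all the comparison constants between $\|(\omega,j)\|_{H^N}$ and $E,\overline{E},\widetilde{E},E_0$ are genuinely independent of $\nu$ and $\mu$ — in particular that the $\nu,\mu$-dependent multipliers $M^1,\dots,M^6$ and $A$ are uniformly bounded above and below on $[0,2t_0]$ (immediate from $M^i\approx1$ for $i\le4$, from the explicit formulas for $M^5,M^6$ which are $\geq c>0$ once $t$ is order one and $|t-\eta/k|$ bounded, hmm — actually $M^5,M^6$ can be small, but they appear in $M$ only multiplying, so they only make $\|MF\|$ smaller, never larger, which is the direction we need), and that the $\epsilon_0$-independence of the well-posedness time follows because dissipation is never used in the lower bound. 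None of this requires the delicate enhanced-dissipation or inviscid-damping structure; it is exactly the short-time smallness statement one expects, and the cited "standard local well-posedness" does the work.
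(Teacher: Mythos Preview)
Your proposal is essentially correct and matches the paper, which gives no proof at all and simply appeals to ``a standard local well-posedness argument.'' Your sketch---local $H^N$ theory plus the observation that all multipliers $M^i$ satisfy $M^i\leq 1$ (so $\|MF\|_{L^2}\lesssim e^{\delta_0\lambda^{1/3}t}\|F\|_{H^N}$) and that the cross terms are controlled by coercivity for $|\beta|>1/2$---is precisely the intended elaboration.

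One small gap worth flagging, though it is equally present (and unaddressed) in the paper: your claim that $\|U^1_0\|_{H^N_Y}\lesssim\|\Omega_0\|_{H^{N-1}_Y}$ via ``$-\partial_y^{-1}$'' is not correct on $\mathbb{R}$, since $\partial_Y^{-1}$ is unbounded at low frequencies. The bound $E_0(0)\leq 2\epsilon^2$ therefore does not follow from $\|(\omega^{in},j^{in})\|_{H^N}\leq\epsilon$ alone; one needs the tacit additional hypothesis $\|(u^1_0,b^1_0)^{in}\|_{H^N_Y}\lesssim\epsilon$. This is harmless in practice---the original MHD system has $(u,b)$ as primitive variables, and the cited works \cite{D24,K24} make the same implicit assumption---but it is not something your energy argument for $(\omega,j)$ can supply.
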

	
	\noindent\textbf{Bootstrap hypotheses}: 
		If $0<\nu\leq\mu^3\leq1$, let $t> t_0$ be the maximal time such that the following estimates hold on $[t_0,t]$:
	\begin{subequations}\label{bs2}
		\begin{align}
			&E(t)+\frac{1}{100|\beta|}\int_{t_0}^{t}D(\tau)+CK(\tau)\mathrm{d}\tau\leq 10\epsilon^2,\label{bsho2}\\
			&E_{0}(t)+\frac{1}{100|\beta|}\int_{t_0}^{t}D_{0}(\tau)\mathrm{d}\tau\leq 10\epsilon^2.\label{bszero2}
		\end{align}
	\end{subequations}
	
	If $0<\mu^3\leq\nu\leq1$, let $t> t_0$ be the maximal time such that the following estimates hold on $[t_0,t]$:
	\begin{subequations}\label{bs}
		\begin{align}
			&\overline{E}(t)+\frac{1}{100|\beta|}\int_{t_0}^{t}\overline{D}(\tau)+\overline{CK}(\tau)\mathrm{d}\tau\leq 10\epsilon^2,\label{bssym}\\
			&\widetilde{E}(t)+\frac{1}{100|\beta|}\int_{t_0}^{t}\widetilde{D}(\tau)+\widetilde{CK}(\tau)\mathrm{d}\tau\leq 1000\epsilon^2\langle t\rangle^2,\label{bsho}\\
			&E_{0}(t)+\frac{1}{100|\beta|}\int_{t_0}^{t}D_{0}(\tau)\mathrm{d}\tau\leq 10\epsilon^2.\label{bszero}
		\end{align}
	\end{subequations}

	Next, we prove that $t=+\infty$ in the three propositions below.
	\begin{proposition}\label{bsup3}
		When $0<\nu\leq\mu^3\leq1$, under the assumption of Theorem \ref{coro2} and bootstrap hypotheses, there exists $0<\epsilon_0=\epsilon_0(N,\beta)$ such that if $\epsilon<\epsilon_0\nu^{\frac{1}{2}}\mu^{\frac{1}{3}}$, then estimates \eqref{bs2} hold on $[t_0,t]$ with all the occurrences of 10 on the right-hand side replaced by 5. The bootstrap argument implies that $t=+\infty$.
	\end{proposition}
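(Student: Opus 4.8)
The plan is to run a standard continuity/bootstrap scheme: starting from the differential inequalities
\[
\frac{\mathrm{d}}{\mathrm{d}t}E+\frac{1}{100|\beta|}(D+CK)\leq T+N+S,\qquad
\frac{\mathrm{d}}{\mathrm{d}t}E_{0}+\frac{1}{100|\beta|}D_{0}\leq R,
\]
we integrate in time from $t_0$ to $t$, use Lemma \ref{st} to control the contribution on $[0,2t_0]$, and then show that every nonlinear term on the right-hand side is bounded by (a small constant times) $\int_{t_0}^{t}(D+CK)+\int_{t_0}^{t}D_0$ plus $\epsilon^3$-type quantities, after invoking the bootstrap hypotheses \eqref{bs2} to estimate the unknowns entering quadratically. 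The key point is that, under the smallness $\epsilon<\epsilon_0\nu^{1/2}\mu^{1/3}$, all the nonlinear contributions come with at least one extra power of $\epsilon/(\nu^{1/2}\mu^{1/3})\ll1$ relative to the dissipation/CK budget, so they can be absorbed, which turns the bootstrap constants $10$ into $5$ and forces $t=+\infty$.

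The order of the steps is as follows. First I would record the interpolation/transfer estimates between $(U_{\neq},B_{\neq})$, $(Z,Q)$, and $(\Omega_{\neq},J_{\neq})$ (the "trade regularity for decay" Lemma \ref{indap} alluded to in the text, together with the commutator Lemma \ref{lemcom} for $\Gamma$ and $M$), and the elliptic bounds $\|U^1_{\neq}\|\lesssim\|\partial_X\Delta_L^{-1}\Omega_{\neq}\|$, $\|U^2_{\neq}\|\lesssim\langle t\rangle^{-1}\|\Gamma\Omega_{\neq}\|$, etc., so that the product structure of $\mathrm{NL}^\Omega,\mathrm{NL}^J$ can be exploited. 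Second I would treat the transport-type terms $T$ and $N$: split every $F\cdot\nabla_L G$ into $F^1\partial_X G$ and $F^2\partial^L_Y G$; for the $F^1\partial_X G$ piece use the commutator $\langle[M,F^1\partial_X]G,MH\rangle$ in the resonant frequency region (this is where the strong enhanced dissipation $\nu^{1/6}\||\partial_X|^{1/3}M(\Omega_{\neq},J_{\neq})\|_{L^2L^2}$ coming from $CK$ is spent to absorb the derivative loss), and for the $F^2\partial^L_Y G$ piece, when $G$ is a zero mode, use the inviscid-damping decay of $(U^2_{\neq},B^2_{\neq})$ to compensate the $\langle t\rangle$ growth of $(\Omega_0,J_0)$. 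Third I would treat the stretching term $S$: here the multiplier $M^6$ (via property \eqref{217}, $\Gamma_k\le|M^6_k|$) is used so that the dangerous factor $\frac{2\partial^L_{XY}}{\Delta_L}$ is controlled by $\Gamma$, which is exactly the quantity appearing in $CK$. Fourth, I would bound $R$ using \eqref{bszero2}: each term in \eqref{zeror} is a product of two nonzero modes paired with a $Y$-derivative of a zero mode; Cauchy–Schwarz plus the enhanced-dissipation decay $e^{-\delta_0\nu^{1/3}t}$ of the nonzero modes makes these integrable against $D_0$. Finally I would collect the estimates, choose $\epsilon_0$ small depending only on $N,\beta$, and close the bootstrap.

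The main obstacle, as flagged in the "Idea of the proof", is the transport term of the form $\langle M(F\cdot\nabla_L G),MH\rangle$ with $FGH\in\{U\Omega\Omega,UJJ,BJ\Omega,B\Omega J\}$ in the regime where $(\Omega_{\neq},J_{\neq})$ is allowed only the $\langle t\rangle$ amplification predicted by \eqref{coro23}: one cannot afford the cruder bound $\|\Omega_{\neq}\|\lesssim\langle t\rangle\|(Z,Q)\|$ that would come from passing through the symmetric variables (that would cost an extra $\mu\nu^{-1/3}$), so the commutator argument must be performed directly on the $(\Omega_{\neq},J_{\neq})$ system, carefully separating the frequency region $|k|\le |\eta-kt|$ (where inviscid damping via $M^1,M^3$ acts and $\partial_X\Delta_L^{-1}$ is small) from $|k|\gtrsim|\eta-kt|$ (where the elliptic gain is weak but the time factor is harmless), and in each region balancing the number of derivatives landing on $F$ versus $G$ so that the loss is paid for either by $\lambda^{1/3}\||\partial_X|^{1/3}MF\|^2$ from $CK$ or by $\nu\|\nabla_L M F\|^2$ from $D$. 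Getting this bookkeeping to close with only the $\langle t\rangle$-growth, rather than $\mu\nu^{-1/3}\langle t\rangle$, is the crux; everything else is either classical (the zero-mode term $R$, the heat estimates) or a direct adaptation of \cite{D24,K24,ZZ23}.
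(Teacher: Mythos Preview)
Your plan is essentially the paper's own: integrate the two energy inequalities, use Lemma~\ref{st} at $t_0$, bound $T,N,S$ via the commutator Lemma~\ref{lemcom} together with the lower bound \eqref{217} on $M^6$, bound $R$ by pairing inviscid damping of $(U^2_{\neq},B^2_{\neq})$ against the $\langle t\rangle$-growth of $(\Omega_0,J_0)$, and close with $\int T,\int N,\int S\lesssim\mu^{-1/3}\nu^{-1/2}\epsilon^3$, $\int R\lesssim\nu^{-1/2}\epsilon^3$, which is absorbable once $\epsilon<\epsilon_0\nu^{1/2}\mu^{1/3}$.

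One point to adjust: the frequency splitting you describe for the commutator piece, namely ``$|k|\le|\eta-kt|$ versus $|k|\gtrsim|\eta-kt|$'', is not the one that actually closes. The paper (and the commutator estimate \eqref{comm}) splits the \emph{convolution} variables via the paraproduct regions $R_1,R_2$ of \eqref{part}: $R_2=\{|k-l|<|l|/2,\ \min(|k|,|l|)>C_2\lambda^{-1/2}\}$ is where the mean-value estimate \eqref{comm} for $M(k,\eta)-M(l,\xi)$ applies, while on $R_1$ one uses \eqref{estl} to trade $|l|$ for $|k-l|+\lambda^{-1/6}|l|^{1/3}|k|^{1/3}$ and feeds the result into the enhanced-dissipation piece $\nu^{1/6}\||\partial_X|^{1/3}M\Omega_{\neq}\|_{L^2_tL^2}$ of $CK$. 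A splitting in the output variable $(k,\eta)$ alone would not give you the derivative distribution needed to avoid losing a full derivative on the high-frequency factor. Once you switch to the $R_1/R_2$ decomposition (and use all three lower bounds in \eqref{217}, not just $\Gamma_k\le|M^6_k|$, since the $\mu^{1/3}$ and $1/\langle t\rangle$ bounds are what produce the $\mu^{-1/3}$ factor in the final estimate), the rest of your outline goes through verbatim.
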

	\begin{proposition}\label{bsup1}
		When $0<\mu^3\leq\nu\leq\mu^{\frac{1}{3}}\leq1$, under the assumption of Theorem \ref{main} and bootstrap hypotheses, there exists $0<\epsilon_0=\epsilon_0(N,\beta)$ such that if $\epsilon<\epsilon_0\lambda^{\frac{1}{2}}$, then estimates \eqref{bs} hold with all the constants on the right-hand side divided by 2 on $[t_0,t]$. It follows by continuity that $t=+\infty$.
	\end{proposition}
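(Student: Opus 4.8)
The plan is a continuity argument. Assume \eqref{bs} holds on $[t_0,t]$. Integrating \eqref{sym}, \eqref{ho}, \eqref{zero} over $[t_0,t]$ and controlling the data at $t_0$ by $2\epsilon^2$ through Lemma \ref{st}, it suffices to show that each of $\overline T,\overline N,\overline S$ (in \eqref{sym}), $\widetilde T,\widetilde S$ (in \eqref{ho}) and $R$ (in \eqref{zero}) integrates to at most $C(N,\beta)\,\epsilon\lambda^{-1/2}$ times the corresponding energy-plus-dissipation budget, while the coupling term $4\sqrt{\overline E}\sqrt{\widetilde E}$ in \eqref{ho} produces exactly the $\langle t\rangle$-growth already permitted by \eqref{bsho}. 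Choosing $\epsilon_0=\epsilon_0(N,\beta)$ small, the hypothesis $\epsilon<\epsilon_0\lambda^{1/2}$ then makes these nonlinear contributions negligible and forces \eqref{bssym}--\eqref{bszero} to self-improve with the constants $10$ and $1000$ replaced by $5$ and $500$; since the improved inequalities are strict and $t$ was maximal, $t=+\infty$.

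I would first record what the bootstrap yields pointwise in time. Because $M=A\langle|\nabla|\rangle^NM^1M^2M^3$ with $A_k=e^{\delta_0\lambda^{1/3}t}$ for $k\neq0$ and $M^i\approx1$ by \eqref{propm}, \eqref{bssym} gives $\|(Z,Q)(t)\|_{H^N}\lesssim\epsilon e^{-\delta_0\lambda^{1/3}t}$ and hence, as in Proposition \ref{linearest}, the inviscid damping $\|(U^1_{\neq},B^1_{\neq})(t)\|_{H^N}+\langle t\rangle\|(U^2_{\neq},B^2_{\neq})(t)\|_{H^{N-1}}\lesssim\epsilon e^{-\delta_0\lambda^{1/3}t}$, whereas \eqref{bsho} gives only $\|(\Omega,J)(t)\|_{H^N}\lesssim\epsilon\langle t\rangle e^{-\delta_0\lambda^{1/3}t}$ and \eqref{bszero} gives $\|(\Omega_0,J_0)(t)\|_{H^N}\lesssim\epsilon\langle t\rangle$, $\|(U^1_0,B^1_0)(t)\|_{H^N}\lesssim\epsilon$. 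The recurring mechanism is that the $\langle t\rangle$-loss of the vorticity and current is always offset either by the $\langle t\rangle^{-1}$ gain of $(U^2,B^2)$ or by a factor $\Gamma$, i.e. by trading regularity for time-decay via Lemma \ref{indap}.

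The bulk of the work is the trilinear forms $\langle M(F\cdot\nabla_LG),MH\rangle$ with $FGH\in\{U\Omega\Omega,\,UJJ,\,BJ\Omega,\,B\Omega J\}$ together with their $\Gamma$-weighted versions in $\overline T$ and their $\tfrac{\partial_Y^L}{\beta\Delta_L}$-weighted versions in $\overline N$. Split $F\cdot\nabla_L=F^1\partial_X+F^2\partial_Y^L$. For $F^1\partial_X$, move $M$ onto $G$ by $M(F^1\partial_XG)=F^1\partial_X(MG)+[M,F^1\partial_X]G$; after a Littlewood--Paley/paraproduct splitting the commutator costs one derivative, repaid by the enhanced-dissipation gain $\lambda^{1/6}\||\partial_X|^{1/3}M(\cdot)\|_{L^2_tL^2}$ recorded in $\widetilde{CK},\overline{CK}$, using the multiplier bounds \eqref{propm} and Lemma \ref{lemcom}. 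For $F^2\partial_Y^L$, the only dangerous piece is $F^2\partial_Y^LG_0=F^2\partial_YG_0$, where the linearly growing zero mode is paired against $F^2\in\{U^2_{\neq},B^2_{\neq}\}$, which decays like $\langle t\rangle^{-1}\epsilon e^{-\delta_0\lambda^{1/3}t}$, so the $\langle t\rangle$ cancels; the remaining $F^2\partial_Y^LG_{\neq}$ is treated as the $\partial_X$ part. In $\overline T$ one must in addition commute $\Gamma$ past the product, $\Gamma M(F\cdot\nabla_LG)=F\cdot\nabla_L(M\Gamma G)+(\text{commutators of }M\text{ and }\Gamma\text{ with }F\cdot\nabla_L)$ with $M\Gamma G\in\{MZ,MQ\}$, the $\Gamma$-commutator being the delicate ingredient from Lemma \ref{lemcom}. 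The stretching terms $\overline S,\widetilde S$ carry $\tfrac{2\partial_{XY}^L}{\Delta_L}=2\,\Gamma\cdot\tfrac{\partial_Y^L}{\sqrt{\partial_{XX}\Delta_L}}$ with $\bigl|\tfrac{\partial_Y^L}{\sqrt{\partial_{XX}\Delta_L}}\bigr|\lesssim1$, so the coercive terms $\|\Gamma MF\|_{L^2}^2$ in $\overline{CK},\widetilde{CK}$ absorb one $\Gamma$ and reduce them to transport-type quantities. The zero-mode term $R$ in \eqref{zeror} is handled using $\|(U^2_{\neq},B^2_{\neq})\|\lesssim\langle t\rangle^{-1}\epsilon e^{-\delta_0\lambda^{1/3}t}$ and the $\langle t\rangle^{-2}$ prefactors on the $(\Omega_0,J_0)$ blocks, so that after Young's inequality each contribution is bounded by $\tfrac{1}{200|\beta|}D_0$ plus a time-integrable remainder of size $\lesssim\epsilon_0^2\epsilon^2$. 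Finally, $4\sqrt{\overline E}\sqrt{\widetilde E}\lesssim\epsilon\sqrt{\widetilde E}$ drives the Gronwall bound $\sqrt{\widetilde E}(t)\lesssim\sqrt{\widetilde E(t_0)}+\epsilon\langle t\rangle$, consistent with and, for $\epsilon_0$ small, safely inside the $500\epsilon^2\langle t\rangle^2$ budget.

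The step I expect to be the main obstacle is the transport term $\widetilde T$ for the $(\Omega,J)$ system. Unlike Navier--Stokes or the symmetrized $(Z,Q)$ system, one has no power of $\langle t\rangle$ to spare because $(\Omega,J)$ itself grows linearly, so the commutator/paraproduct bookkeeping for $\langle M(F^1\partial_XG),MH\rangle$ must be carried out precisely in the frequency regimes where $M^2$ and $A$ supply enough enhanced dissipation to pay for the derivative loss, while the couplings with $4\sqrt{\overline E}\sqrt{\widetilde E}$ and with the linearly growing zero mode are tracked simultaneously. The terms $B\cdot\nabla_L\Omega$ and $B\cdot\nabla_LJ$, in which $B$ is controlled only through $Q=\Gamma J$, are the most demanding and depend on the good cancellation structure of \eqref{nlterm} together with the $\Gamma$-commutator estimates of Lemma \ref{lemcom}.
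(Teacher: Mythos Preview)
Your proposal is correct and follows essentially the same route as the paper. The paper organizes the argument exactly as you describe: integrate \eqref{sym}--\eqref{zero}, use Lemma \ref{st} for the data at $t_0$, and show $\int_{t_0}^t(\overline T+\overline N)\,d\tau\lesssim\lambda^{-1/2}\epsilon^3$, $\int_{t_0}^t(\widetilde T+\widetilde S)\,d\tau\lesssim\lambda^{-1/2}\epsilon^3\langle t\rangle^2$, with $\overline S$ and $R$ taken from \cite{D24}; the coupling $4\int\sqrt{\overline E}\sqrt{\widetilde E}\,d\tau$ is bounded directly by $\lesssim 400\epsilon^2\langle t\rangle^2$ using the bootstrap numbers rather than via Gronwall, which is equivalent to your argument.

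Two small differences in emphasis are worth noting. First, the paper does not use a Littlewood--Paley/paraproduct decomposition but the explicit split of $(k,l)$ into the regions $R_1,R_2$ of \eqref{part}, on which the key inequality \eqref{estl} and the $M$-commutator \eqref{comm} are applied; this is the concrete realization of your ``commutator costs one derivative, repaid by $\lambda^{1/6}\||\partial_X|^{1/3}M(\cdot)\|$''. Second, the paper treats $\overline T$ (specifically $I^1=Re\langle\Gamma M(U\cdot\nabla_L\Omega),MZ\rangle$) as the model hard term, not $\widetilde T$ or the $B$-terms: in this regime $M^5$ is absent, so $B$ carries no extra penalty and the $B$-transport terms are handled identically to the $U$-ones. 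Your anticipated obstacle in $\widetilde T$ is real but milder than in $\overline T$, since for $\widetilde T$ one only commutes $M$ (not $\Gamma$) and is allowed the $\langle t\rangle^2$ budget on the right.
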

	\begin{proposition}\label{bsup2}
		When $0<\mu^{\frac{1}{3}}\leq\nu\leq1$, under the assumption of Theorem \ref{coro} and bootstrap hypotheses, there exists $0<\epsilon_0=\epsilon_0(N,\beta)$ such that if $\epsilon<\epsilon_0\nu^{-1}\mu^{\frac{5}{6}}$, then estimates \eqref{bs} hold with all the constants on the right-hand side divided by 2 on $[t_0,t]$. Therefore, we have that $t=+\infty$.
	\end{proposition}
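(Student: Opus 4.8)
The plan is a continuity argument driven by the three differential inequalities \eqref{sym}, \eqref{ho}, \eqref{zero}, whose derivation (following \cite{D24}) remains valid verbatim in the sub-regime $0<\mu^{1/3}\le\nu\le1$ once the multiplier $M$ of \eqref{m} is taken with the extra factors $M^4,M^5$ of \eqref{m4}--\eqref{m5} (here $\lambda=\mu$). By Lemma~\ref{st} the bootstrap set contains $[t_0,2t_0]$, so on the maximal interval $[t_0,t]$ it suffices to bound each nonlinear right-hand side, integrated in time against the bootstrap hypotheses \eqref{bs}, the linear decay rates of Proposition~\ref{linearest2}, and the regularity/decay trade-offs of Lemma~\ref{indap}, by $C(N,\beta)\,\epsilon^{3}\nu^{-1}\mu^{-5/6}$ times the relevant time weight; then \eqref{init} forces these contributions below $\tfrac12$ of the bootstrap constants for $\epsilon_0$ small, which improves \eqref{bssym}--\eqref{bszero} with all constants halved and, by continuity, gives $t=+\infty$. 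The one structural fact to keep in view is \eqref{216}, $|1/M^5|\lesssim1+\min\{\nu\langle t-\eta/k\rangle,\alpha\}$ with $\alpha=\nu\mu^{-1/3}$: it is what converts the $M$-weighted bootstrap bounds into the physical amplification $\nu\mu^{-1/3}$ recorded in \eqref{corol}, and it is the only place a factor $\alpha$ can enter the estimates.

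For the $(Z,Q)$ system the terms $\overline T,\overline N,\overline S$ are all of the schematic form $\langle\Gamma M(F\cdot\nabla_LG),MH\rangle$ with $F\in\{U,B\}$, $G\in\{\Omega,J\}$, $H\in\{Z,Q\}$ (up to the zero-order modification of the test function by $\tfrac{2\partial_Y^L}{\beta\Delta_L}$ in $\overline N$, and up to the lower-order stretching structure of $\overline S$). I would split $F\cdot\nabla_LG=F^1\partial_XG+F^2\partial_Y^LG$. The piece $F^1\partial_XG$ loses a derivative through the weights $\langle\nabla\rangle^{N}$, $\Gamma$ and $1/M^5$, which are not bounded below; here I would pass to the commutator $\langle[\Gamma M,F^1\partial_X]G,\Gamma MH\rangle$ (Lemma~\ref{lemcom}) on the frequency region where the symbol of $\Gamma M$ varies, so the derivative gets shared, and absorb the residue into the enhanced-dissipation term $\lambda^{1/3}\||\partial_X|^{1/3}MF\|_{L^2}^2$ of $\overline{CK}$ together with the parabolic gains in $\overline D$. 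The piece $F^2\partial_Y^LG$ I would split by $G=G_0+G_{\neq}$: the zero-mode interaction $F^2\partial_YG_0$ is where the $\langle t\rangle$-growth of $(\Omega_0,J_0)$ enters, and it is exactly neutralized by the inviscid damping $\|(U^2_{\neq},B^2_{\neq})\|_{L^2}\lesssim\langle t\rangle^{-1}\epsilon\,e^{-\delta_0\mu^{1/3}t}$ coming from the $M^1$ and $\Gamma$ gains, so the net time factor is integrable against $A^{-2}=e^{-2\delta_0\mu^{1/3}t}$ and yields a power of $\mu^{-1/3}$; the purely non-zero interaction $F^2\partial_Y^LG_{\neq}$ carries two $\Gamma$-type gains and goes directly into $\overline D+\overline{CK}$. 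The $(\Omega,J)$ nonlinearity $\widetilde T+\widetilde S$ is handled the same way, absorbed into $\widetilde D+\widetilde{CK}$, but now it is allowed to land in the larger $\langle t\rangle^2$ budget of $\widetilde E$; the point is that this quadratic growth is produced not by the nonlinearity but by the linear coupling $4\sqrt{\overline E}\sqrt{\widetilde E}$ in \eqref{ho} (which originates from the stretching term $2\partial_{XY}^L\Phi=2\partial_{XY}^L\Delta_L^{-1}\Gamma^{-1}Q$), since $\sqrt{\overline E}\lesssim\epsilon$ gives $\tfrac{d}{dt}\sqrt{\widetilde E}\lesssim\epsilon+\cdots$.

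The hard part, I expect, will be the magnetic transport in the $Z$-equation, $\langle\Gamma M(B\cdot\nabla_LJ),MZ\rangle$. In the window $t-\eta/k\in[4\nu^{-1},4\mu^{-1/3}]$ the factor $1/M^5$ can be as large as $\alpha=\nu\mu^{-1/3}$, and a naive estimate of $B^1\partial_XJ_{\neq}$ against $MZ$ loses exactly this $\alpha$, which $\overline D+\overline{CK}$ cannot repay. My plan to fix this is to use the divergence-free structure $B=\nabla_L^\bot\Delta_L^{-1}J$, so $B^1=-\partial_Y^L\Delta_L^{-1}J$ carries its own inviscid-damping (i.e. $\Gamma^2$-type) gain; pairing that gain against the $MZ$ test function and combining with the commutator decomposition above, the product of symbols should have a time derivative that is absorbed by $\overline D+\overline{CK}$, so the would-be $\alpha$-loss is traded for the smallness $\nu\mu^{-1/3}\epsilon\lesssim\mu^{1/2}\epsilon_0$ permitted by \eqref{init}. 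This is precisely the step that yields the improvement over \cite{D24,K24}, and I would develop it carefully in a dedicated subsection (Section~\ref{bsset2}).

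Finally, in \eqref{zero} every term of $R$ in \eqref{zeror} is a product of a factor with inviscid-damping decay $\langle t\rangle^{-1}$ (a $U^2_{\neq}$ or $B^2_{\neq}$, or the explicit $\langle t\rangle^{-2}$ prefactor) against factors controlled by \eqref{bssym}--\eqref{bsho}, so its time integral converges, after $e^{-\delta_0\mu^{1/3}\tau}$ has absorbed the leftover polynomial weights into negative powers of $\mu^{1/3}$, to $C\epsilon^{3}\nu^{-1}\mu^{-5/6}$. Collecting the three estimates and choosing $\epsilon_0=\epsilon_0(N,\beta)$ so small that the accumulated constant times $\epsilon_0$ is $<\tfrac12$, one closes \eqref{bs} with $10\mapsto5$, $1000\mapsto500$, $10\mapsto5$ on $[t_0,t]$; since these improved bounds are strict, the maximal time must be $+\infty$.
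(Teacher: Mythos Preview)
Your bootstrap framework, the identification of the $M^5$ multiplier as the source of the amplification $\alpha=\nu\mu^{-1/3}$, and the splitting $F^1\partial_X+F^2\partial_Y^L$ with commutator treatment all match the paper's strategy. Two minor slips: the integrated nonlinear bound is $C\alpha\mu^{-1/2}\epsilon^3=C\nu\mu^{-5/6}\epsilon^3$, not $C\epsilon^3\nu^{-1}\mu^{-5/6}$ (your later sentence ``$\nu\mu^{-1/3}\epsilon\lesssim\mu^{1/2}\epsilon_0$'' shows you have the right threshold in mind); and $B^1=-\partial_Y^L\Delta_L^{-1}J$ carries a single $\Gamma$-gain ($|\hat B^1|\le|k|^{-1}|\hat Q|$), not $\Gamma^2$.

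The substantive gap is your plan for $\overline T[BJZ]$. Simply invoking the Biot--Savart gain on $B^1$ and ``pairing that gain against $MZ$'' does not close: the factor $\partial_XJ_{\neq}$ still costs a $\Gamma^{-1}$ (since $J=\Gamma^{-1}Q$), and without a cancellation you pick up a \emph{second} factor $1/M^5\lesssim\alpha$ when distributing $M$ onto the low-frequency piece, which would degrade the threshold to $\epsilon\lesssim\nu^{-2}\mu^{7/6}$ rather than $\nu^{-1}\mu^{5/6}$. The paper's fix is structural and not the commutator you propose: it treats the two magnetic cross terms \emph{together} and rewrites them at the velocity level via the curl identities
\[
B\cdot\nabla_LJ=\nabla_L^\perp\cdot(B\cdot\nabla_LB),\qquad
B\cdot\nabla_L\Omega=\nabla_L^\perp\cdot(B\cdot\nabla_LU)+\Big(\tfrac{2\partial_{XY}^L}{\Delta_L}\Omega\Big)\Big(J-\tfrac{2\partial_{XX}}{\Delta_L}J\Big),
\]
together with the Parseval identity $\langle\Gamma\nabla_L^\perp\!\cdot F,\Gamma\nabla_L^\perp\!\cdot G\rangle=\langle\partial_XF,\partial_XG\rangle$ for divergence-free $F,G$. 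This converts $\overline T[BJZ]+\overline T[B\Omega Q]$ into $\langle[\partial_XM,B\cdot\nabla_L]B,\partial_XMU\rangle+\langle[\partial_XM,B\cdot\nabla_L]U,\partial_XMB\rangle$ (plus a harmless $\overline S$-type remainder), where the cross-cancellation $\langle B\cdot\nabla_L(\partial_XMB\cdot\partial_XMU)\rangle=0$ is available and, crucially, the unknowns appearing are $B,U$ rather than $J,\Omega$, so every factor already carries the $\Gamma$-gain of Lemma~\ref{indap}. Only then does the $(1,1)/(1,\alpha)/(\alpha,1)/(\alpha,\alpha)$ expansion of $1/M^5$ yield a single net $\alpha$ and the stated bound $\alpha\mu^{-1/2}\epsilon^3$. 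Your proposal does not contain this rewriting, and the phrase ``the product of symbols should have a time derivative absorbed by $\overline D+\overline{CK}$'' does not correspond to any concrete mechanism here.
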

	
	\subsection{Preliminary lemmas}
	The proof of Proposition \ref{bsup3}--\ref{bsup2} constitutes the majority of our work. Before proving them, we need to introduce some preliminary lemmas. The first result gives commutator estimates for the multipliers $\Gamma$ and $M$.
	\begin{lemma}\label{lemcom}
		For $k,l\neq 0$, we have
		\begin{equation}\label{comsym}
			\left|\Gamma(k,\eta)-\Gamma(l,\xi)\right|\lesssim\frac{\Gamma(l,\xi)}{|l|}\Big(\Gamma(k,\eta)\big|\eta-\xi-(k-l)t\big|+\left|k-l\right|\Big).
		\end{equation}
		If $k=l\neq0$, it holds that
		\begin{equation}\label{commeq}
			|M(k,\eta)-M(k,\xi)|\lesssim e^{\delta_0\lambda^{\frac{1}{3}}t}\left( \big(|k|^{-1}+\lambda^{\frac{1}{3}}|k|^{-\frac{1}{3}}\big)|\eta-\xi|\langle|k,\xi|\rangle^N+\langle|k,\xi|\rangle^{N-1}|\eta-\xi|+\langle|\eta-\xi|\rangle^{N}\right).
		\end{equation}
		Similarly, if $|k-l|<|l|/2,\ \min(|k|,|l|)>C_2\lambda^{-\frac{1}{2}}$, the following estimate is true
		\begin{equation}\label{comm}
			|M(k,\eta)-M(l,\xi)|\lesssim e^{\delta_0\lambda^{\frac{1}{3}}t} \big(\langle|k-l,\eta-\xi|\rangle^N+\langle|l,\xi|\rangle^N\big)\big(\lambda^{\frac{1}{3}}|k|^{-\frac{1}{3}}|\eta-\xi|+|l|^{-1}(\lambda^{\frac{1}{2}}|\eta|+1)|k-l|\big).
		\end{equation}
	\end{lemma}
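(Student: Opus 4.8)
The plan is to estimate each difference by splitting into a ``main region'' where the multiplier varies slowly and a ``transition region'' where one can bound the two terms separately. For \eqref{comsym}, I would first recall that $\Gamma(k,\eta)=|k|/\sqrt{k^2+(\eta-kt)^2}$, so $\Gamma(k,\eta)\in(0,1]$ and $\Gamma$ is, up to the factor $|k|$, a function of the single variable $\eta-kt$. I would write $\Gamma(k,\eta)-\Gamma(l,\xi)$ as a telescoping sum $\big(\Gamma(k,\eta)-\Gamma(l,\eta-\xi\cdot 0)\big)$ — more precisely interpolate between the frequencies $(k,\eta)$ and $(l,\xi)$ along a path that first changes $\eta-kt$ to $\xi-lt$ with $k$ fixed, then changes $k$ to $l$ — and bound each piece by the mean value theorem using $|\partial_\eta\Gamma(k,\cdot)|\lesssim \Gamma(k,\eta)^2/|k|$ and $|\partial_k\Gamma|\lesssim 1/|k|$. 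Keeping careful track of which argument the surviving $\Gamma$-factor is evaluated at (one wants $\Gamma(l,\xi)/|l|$ out front, as in the claimed bound) and noting that $\eta-\xi-(k-l)t = (\eta-kt)-(\xi-lt)$ is the natural increment, this yields \eqref{comsym}.

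For \eqref{commeq} with $k=l$, the multiplier is $M(k,\eta)=A_k(t,\eta)\langle|k,\eta|\rangle^N M^1_kM^2_kM^3_k(M^4_kM^5_k$ or $M^6_k)$, and the only $\eta$-dependence in $A_k$ is none (it is $e^{\delta_0\lambda^{1/3}t}$ for $k\neq0$), so I would pull out the common factor $e^{\delta_0\lambda^{1/3}t}$ and then differentiate the product $\langle|k,\eta|\rangle^N\prod_i M^i_k$ in $\eta$. By the product rule and the logarithmic-derivative bounds \eqref{214} ($|\partial_\eta M^i_k/M^i_k|\lesssim \lambda^{1/3}|k|^{-1/3}+|k|^{-1}$ for $i=1,\dots,6$) together with $|M^i_k|\approx1$ for $i=1,2,3,4$ and the crude bound $|M^5_k|,|M^6_k|\le 1$, one gets $|\partial_\eta M(k,\cdot)|\lesssim e^{\delta_0\lambda^{1/3}t}\big((\lambda^{1/3}|k|^{-1/3}+|k|^{-1})\langle|k,\eta|\rangle^N+\langle|k,\eta|\rangle^{N-1}\big)$. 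Integrating from $\xi$ to $\eta$ and using $\langle|k,\eta|\rangle\lesssim\langle|k,\xi|\rangle+\langle|\eta-\xi|\rangle$ so that $\langle|k,\eta|\rangle^N\lesssim\langle|k,\xi|\rangle^N+\langle|\eta-\xi|\rangle^N$ along the path (distributing the extra $\langle\eta-\xi\rangle^{N}$ term) produces \eqref{commeq}.

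For \eqref{comm}, with $|k-l|<|l|/2$ and both frequencies above the cutoff $C_2\lambda^{-1/2}$, all of $M^1,M^3,M^4,M^5,M^6$ equal $1$, so $M=e^{\delta_0\lambda^{1/3}t}\langle|k,\eta|\rangle^N M^2_k$, and I would again interpolate along a two-leg path $(k,\eta)\to(k,\xi)\to(l,\xi)$. The first leg is handled exactly as in \eqref{commeq}, contributing the $\lambda^{1/3}|k|^{-1/3}|\eta-\xi|$ term (times the polynomial factor). For the second leg I use the $k$-derivative bound \eqref{215}, $|\partial_k M^2_k/M^2_k|\lesssim \lambda^{1/3}|k|^{-4/3}|\eta|+|k|^{-1}$, valid since $|k|>C_2\lambda^{-1/2}$; combined with $|\partial_k\langle|k,\eta|\rangle^N|\lesssim\langle|k,\eta|\rangle^{N-1}$ and the fact that along this leg $|k|\approx|l|$ and $\lambda^{1/3}|k|^{-4/3}|\eta| = |k|^{-1}\cdot\lambda^{1/3}|k|^{-1/3}|\eta|\lesssim |l|^{-1}(\lambda^{1/2}|\eta|+1)$ (using $|k|^{-1/3}\le(C_2^{-1}\lambda^{1/2})^{1/3}\lesssim\lambda^{1/6}$, hence $\lambda^{1/3}|k|^{-1/3}\lesssim\lambda^{1/2}$), integrating in $k$ over an interval of length $|k-l|$ gives the $|l|^{-1}(\lambda^{1/2}|\eta|+1)|k-l|$ term. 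Splitting $\langle|k,\eta|\rangle^N$ as before into $\langle|k-l,\eta-\xi|\rangle^N+\langle|l,\xi|\rangle^N$ finishes \eqref{comm}.

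The main obstacle I anticipate is bookkeeping rather than any genuine analytic difficulty: one must be careful about \emph{which} frequency each surviving $\Gamma$- or $M$-factor is evaluated at after interpolation (the statement demands $\Gamma(l,\xi)$, not $\Gamma(k,\eta)$, outside in \eqref{comsym}, and the polynomial weights in \eqref{commeq}--\eqref{comm} must come out symmetrically in $(k,\xi)$ and $(k-l,\eta-\xi)$), and one must verify that the regime hypotheses — $|k|,|l|>C_2\lambda^{-1/2}$ for \eqref{comm}, and $k=l$ for \eqref{commeq} — indeed trivialize exactly the multipliers that lack good two-variable commutator bounds (namely $M^5,M^6$, for which only the one-sided bounds \eqref{216}--\eqref{217} and the $\eta$-derivative bound \eqref{214} are available). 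Once the path of interpolation is fixed and these reductions are in place, the estimates reduce to one-dimensional mean-value-theorem arguments with the logarithmic-derivative bounds from \eqref{propm}.
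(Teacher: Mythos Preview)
Your plan for \eqref{commeq} is essentially the paper's proof: separate out $\langle|k,\eta|\rangle^N$ from the rest of $M$, apply the mean value theorem to each factor, and use the logarithmic-derivative bound \eqref{214}. For \eqref{comm} the paper gives no argument at all and simply cites \cite{WZ23}; your sketch (noting that $M^1,M^3,M^4,M^5,M^6\equiv1$ in the high-frequency regime so only $\langle|\nabla|\rangle^N M^2$ survives, then interpolating in $\eta$ and $k$ using \eqref{214}--\eqref{215}) is correct and more informative. One point you gloss over: the term $\langle|k,\eta'|\rangle^{N-1}|\eta-\xi|$ arising from $\partial_\eta\langle|k,\cdot|\rangle^N$ fits into the claimed bound only after observing that either $|\eta-\xi|\lesssim|k|$, in which case $\langle|k,\eta'|\rangle^{N-1}\lesssim\langle|l,\xi|\rangle^{N-1}$ and $|k|^{-1}\lesssim\lambda^{1/3}|k|^{-1/3}$ in this regime, or $|\eta-\xi|\gtrsim|k|>C_2\lambda^{-1/2}$, in which case $\lambda^{1/3}|k|^{-1/3}|\eta-\xi|\gtrsim1$ and the bound is trivial.

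For \eqref{comsym} the paper takes a different and cleaner route: rather than MVT along a path, it rationalizes directly,
\[
|\Gamma(k,\eta)-\Gamma(l,\xi)|
=\frac{\big|(\xi/l-t)^2-(\eta/k-t)^2\big|}{\sqrt{1+(\eta/k-t)^2}\sqrt{1+(\xi/l-t)^2}\,\big(\sqrt{1+(\eta/k-t)^2}+\sqrt{1+(\xi/l-t)^2}\big)}
\lesssim\frac{|\eta/k-\xi/l|}{(1+|\eta/k-t|)(1+|\xi/l-t|)},
\]
which already places one $\Gamma$-factor at each endpoint $(k,\eta)$ and $(l,\xi)$; writing $\frac{\eta}{k}-\frac{\xi}{l}=\frac{1}{kl}\big[(\eta-\xi-(k-l)t)k-(k-l)(\eta-kt)\big]$ then gives \eqref{comsym} immediately. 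Your MVT path (first move $\eta-kt\to\xi-lt$, then change $k\to l$ along the curve $\eta'(k')-k't=\xi-lt$) can be made to work, but the derivative bound you quote, ``$|\partial_k\Gamma|\lesssim1/|k|$'', is not what you need: the total derivative along step~2 is $a^2/(m^2+a^2)^{3/2}$ with $a=|\xi-lt|$, and bounding this by $\Gamma(l,\xi)/|l|=1/\sqrt{l^2+a^2}$ at an \emph{intermediate} $m$ requires a further case split (if $|k|\le|l|/2$ use the trivial bound $|\Gamma(k,\eta')-\Gamma(l,\xi)|\le\Gamma(l,\xi)\le2\Gamma(l,\xi)|k-l|/|l|$; otherwise $m\approx|l|$ and the derivative bound suffices). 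So your approach succeeds, but the paper's algebraic factorization is what sidesteps exactly the endpoint-versus-intermediate-point bookkeeping you flagged as the main obstacle.
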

	\begin{proof}
		A direct computation shows
		\begin{align*}
			\left|\Gamma(k,\eta)-\Gamma(l,\xi)\right|=&\left|\frac{\left(\frac{\xi}{l}-t\right)^2-\left(\frac{\eta}{k}-t\right)^2}{\sqrt{1+\left(\frac{\eta}{k}-t\right)^2}\sqrt{1+\left(\frac{\xi}{l}-t\right)^2}\left(\sqrt{1+\left(\frac{\eta}{k}-t\right)^2}+\sqrt{1+\left(\frac{\xi}{l}-t\right)^2}\right)}\right|\\			
			\lesssim&\frac{\left|\frac{\eta}{k}-\frac{\xi}{l}\right|}{\left(1+\left|\frac{\eta}{k}-t\right|\right)\left(1+\left|\frac{\xi}{l}-t\right|\right)}\\
			\leq&\frac{\Gamma(l,\xi)}{|l|}\frac{\left|[(\eta-\xi)-(k-l)t]k-(k-l)(\eta-kt)\right|}{|k|+\left|\eta-kt\right|}\\
			\lesssim&\frac{\Gamma(l,\xi)}{|l|}\Big(\Gamma(k,\eta)\big|(\eta-\xi)-(k-l)t\big|+\left|k-l\right|\Big),
		\end{align*}
		proving \eqref{comsym}.
		For \eqref{commeq}, let $\tilde{M}(k,\eta)=M(k,\eta)/\langle|k,\eta|\rangle^N$. We have
		\begin{align*}
			|M(k,\eta)-M(k,\xi)|=&\left|\tilde{M}(k,\eta)\langle|k,\eta|\rangle^N-\tilde{M}(k,\eta)\langle|k,\xi|\rangle^N+\tilde{M}(k,\eta)\langle|k,\xi|\rangle^N-\tilde{M}(k,\xi)\langle|k,\xi|\rangle^N\right|\\
			\leq& \tilde{M}(k,\eta)|\langle|k,\eta|\rangle^N-\langle|k,\xi|\rangle^N|+|\tilde{M}(k,\eta)-\tilde{M}(k,\xi)|\langle|k,\xi|\rangle^N\\
			\lesssim&\tilde{M}(k,\eta)\left|\int_0^1\langle|k,\xi+\theta(\eta-\xi)\rangle^{N-1}|\eta-\xi|\mathrm{d}\theta\right|+e^{\delta_0\lambda^{\frac{1}{3}}t}\big(|k|^{-1}+\lambda^{\frac{1}{3}}|k|^{-\frac{1}{3}}\big)|\eta-\xi|\langle|k,\xi|\rangle^N\\
			\lesssim&e^{\delta_0\lambda^{\frac{1}{3}}t}\left(\langle|k,\xi|\rangle^{N-1}|\eta-\xi|+\langle|\eta-\xi|\rangle^{N}+\big(|k|^{-1}+\lambda^{\frac{1}{3}}|k|^{-\frac{1}{3}}\big)|\eta-\xi|\langle|k,\xi|\rangle^N\right).
		\end{align*}
		From \cite{WZ23}, we get \eqref{comm}, completing the proof.
	\end{proof}
	The next lemma gives the estimates of $U$. \eqref{bdu2t} enables us to capture the inviscid damping of $U^2_{\neq}$. Obviously, the same results hold for the magnetic field $B$. 
	\begin{lemma}\label{indap}
		We have the following estimates
		\begin{align}
			&|\hat{U}^1_{\neq}(k,\eta)|\leq\frac{1}{|k|}|\hat{Z}(k,\eta)|=\frac{1}{|k|}|\Gamma(k,\eta)\hat{\Omega}_{\neq}(k,\eta)|,\label{bdu1}\\
			\label{bdu2}
			&|\hat{U}^2_{\neq}(k,\eta)|\leq \left|\frac{\hat{Z}(k,\eta)}{\sqrt{p(k,\eta)}}\right|=\frac{1}{|k|}|\Gamma(k,\eta) \hat{Z}(k,\eta)|=\frac{1}{|k|}|\Gamma^2(k,\eta)\hat{\Omega}_{\neq}(k,\eta)|,\\
			&|\hat{U}^2_{\neq}(k,\eta)|\leq\frac{1}{\langle t\rangle}\left|\langle|k,\eta|\rangle\Gamma(k,\eta)\hat{\Omega}_{\neq}(k,\eta)\right|=\frac{1}{\langle t\rangle}\left|\langle|k,\eta|\rangle\hat{Z}(k,\eta)\right|\label{bdu2t},
		\end{align}
		where $p(k,\eta)=k^2+(\eta-kt)^2$.
	\end{lemma}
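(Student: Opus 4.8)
The plan is to reduce all of \eqref{bdu1}--\eqref{bdu2t} to pointwise scalar inequalities in frequency. First I would record the symbols: from the Biot--Savart law $U=\nabla_L^\bot\Psi$, $\Psi=\Delta_L^{-1}\Omega$ we have $U^1=-\partial_Y^L\Psi$ and $U^2=\partial_X\Psi$, and the Fourier symbol of $\Delta_L$ is $-p(k,\eta)$ with $p(k,\eta)=k^2+(\eta-kt)^2$, so that
\begin{equation*}
	\hat U^1_{\neq}(k,\eta)=\frac{i(\eta-kt)}{p(k,\eta)}\,\hat\Omega_{\neq}(k,\eta),\qquad
	\hat U^2_{\neq}(k,\eta)=\frac{-ik}{p(k,\eta)}\,\hat\Omega_{\neq}(k,\eta).
\end{equation*}
Moreover, since $\Gamma=\sqrt{\partial_{XX}/\Delta_L}$ has symbol $\Gamma(k,\eta)=|k|/\sqrt{p(k,\eta)}$, we get $\hat Z(k,\eta)=\Gamma(k,\eta)\hat\Omega_{\neq}(k,\eta)=|k|\,p(k,\eta)^{-1/2}\hat\Omega_{\neq}(k,\eta)$. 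Everything in the lemma is then a statement about these explicit symbols.

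With these identities, \eqref{bdu1} becomes the trivial bound $|\eta-kt|\le\sqrt{p(k,\eta)}=\sqrt{k^2+(\eta-kt)^2}$, and \eqref{bdu2} is pure bookkeeping: inserting the symbols of $\Gamma$ and $\hat Z$, one checks that each of the four expressions there equals $|k|\,p(k,\eta)^{-1}|\hat\Omega_{\neq}(k,\eta)|=|\hat U^2_{\neq}(k,\eta)|$. The only inequality that is not immediate is \eqref{bdu2t}: after cancelling the common factor $|k|\,p(k,\eta)^{-1/2}$ it is equivalent to
\begin{equation*}
	\langle t\rangle\le\langle|k,\eta|\rangle\sqrt{p(k,\eta)}\qquad\text{for }k\neq0.
\end{equation*}
Since $\langle|k,\eta|\rangle^2=1+(|k|+|\eta|)^2\ge 1+k^2+\eta^2$, it suffices to prove $(1+k^2+\eta^2)\,p(k,\eta)\ge 1+t^2$. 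Setting $m=k^2$, $s=\eta$, $r=\eta-kt$ (so $s-r=kt$), expanding $(1+m+s^2)(m+r^2)$ and subtracting $1+(s-r)^2$ leaves $(m-1)+mr^2+(m-1)s^2+(m^2-1)+(sr+1)^2$, which is nonnegative because $m=k^2\ge1$ for nonzero integer frequencies; and then $1+(s-r)^2=1+k^2t^2\ge 1+t^2=\langle t\rangle^2$, again using $k^2\ge1$. This proves the claim and hence \eqref{bdu2t}.

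Finally, the corresponding bounds for $B$ follow verbatim: $B=\nabla_L^\bot\Delta_L^{-1}J$ and $Q=\Gamma J$, so replacing $(\Omega,Z,U)$ by $(J,Q,B)$ throughout reproduces every step. I do not expect a serious obstacle here; the only point requiring care is the constant-free inequality $\langle t\rangle\le\langle|k,\eta|\rangle\sqrt{p(k,\eta)}$, whose proof crucially uses $|k|\ge1$ on the nonzero modes — this is precisely where the gain of one power of $|k|$ in the Biot--Savart kernel is traded for the time-decay factor $\langle t\rangle^{-1}$ underlying the inviscid damping of $U^2_{\neq}$.
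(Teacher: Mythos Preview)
Your proof is correct and follows the same approach as the paper: both reduce to the Biot--Savart symbol identities for $\hat U^1_{\neq},\hat U^2_{\neq}$ and then verify \eqref{bdu2t} via the pointwise inequality $\langle t\rangle\lesssim\langle|k,\eta|\rangle\sqrt{p(k,\eta)}$ (the paper phrases this as $\langle|k,\eta-kt|\rangle\langle|k,\eta|\rangle\gtrsim\langle|kt|\rangle$). Your explicit algebraic verification $(1+k^2+\eta^2)\,p(k,\eta)\ge 1+k^2t^2\ge 1+t^2$ is a nice sharpening that yields the constant-free bound stated in the lemma, whereas the paper only records the inequality up to an implicit constant.
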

	\begin{proof}
		By 
		\begin{equation*}
			|\hat{U}_{\neq}(k,\eta)|=\left|\frac{(\eta-kt,k)}{k^2+(\eta-kt)^2}\hat{\Omega}_{\neq}(k,\eta)\right|
		\end{equation*}
		and the definition of $Z$, we deduce \eqref{bdu2} and \eqref{bdu1}. The inequality $\langle|k,\eta-kt|\rangle\langle|k,\eta|\rangle\gtrsim\langle|kt|\rangle$ implies \eqref{bdu2t}, completing the proof.
	\end{proof}
	\subsection{Proof of bootstrap in the region $\mu^3\leq\nu\leq\mu^{\frac{1}{3}}$}\label{case1}
	First, we consider $0<\mu^3\leq\nu\leq\mu\leq1$ and it holds $\lambda=\nu$. The bound of $\overline{S},R$ has been obtained in \cite{D24}, therefore, we focus on $\overline{T},\overline{N},\widetilde{T},\widetilde{S}$.
	\subsubsection{Estimates on $\overline{T}$}\label{estt}
	Recall \begin{equation*}
		\begin{aligned}
			\overline{T}=&Re\left\langle\Gamma M(U\cdot\nabla_{L}\Omega),MZ\right\rangle_{L^2_{X,Y}}+Re\left\langle\Gamma M(U\cdot\nabla_{L}J),MQ\right\rangle_{L^2_{X,Y}}\\
			&+Re\left(\left\langle\Gamma M(B\cdot\nabla_{L}J),MZ\right\rangle_{L^2_{X,Y}}+\left\langle\Gamma M(B\cdot\nabla_{L}\Omega),MQ\right\rangle_{L^2_{X,Y}}\right).
		\end{aligned}
	\end{equation*}
	Here we only focus on the first term 
	 $$I^1=Re\left\langle\Gamma M(U\cdot\nabla_{L}\Omega),MZ\right\rangle,$$ and the other terms are treated similarly. We need to get an estimate for the commutator $\left[M\Gamma,U^1\partial_{X}\right]$. Since $\nabla_L\cdot U=0$, we have 
	\begin{equation*}
		Re\left\langle(U\cdot\nabla_{L}MZ),MZ\right\rangle=0.
	\end{equation*}
	Using $U=U_{\neq}+U_0$ and $\Omega=\Omega_{\neq}+\Omega_0$, $I^1$ can be decomposed into four terms
	\begin{align*}
		I^1=&\left\langle\left[M\Gamma,U\cdot\nabla_L\right]\Omega,MZ\right\rangle\\
		=&\left\langle\left[M\Gamma,U^1_{\neq}\partial_{X}\right]\Omega_{\neq},MZ\right\rangle
		+\left\langle\left[M\Gamma,U^1_0\partial_{X}\right]\Omega_{\neq},MZ\right\rangle\\
		&+\left\langle\left[M\Gamma,U^2_{\neq}\partial_Y^L\right]\Omega_{\neq},MZ\right\rangle
		+\left\langle\left[M\Gamma,U^2_{\neq}\partial_Y^L\right]\Omega_{0},MZ\right\rangle\\
		=&:I^1_1+I^1_2+I^1_3+I^1_4.
	\end{align*}
	\textbf{Treament of $I^1_1$}
	
	\noindent Considering the effects of $\Gamma$ and $M$, we split $I^1_1$ into three parts:
	\begin{align*}
		I^1_1=&\left|\sum_{k,l\neq0}\iint\left[M(k,\eta)\Gamma(k,\eta)-M(l,\xi)\Gamma(l,\xi)\right]\hat{U}^1_{\neq}(k-l,\eta-\xi) il\hat{\Omega}(l,\xi) M(k,\eta)\overline{\hat{Z}}(k,\eta)\mathrm{d}\xi\mathrm{d}\eta\right|\\
		\lesssim&  \sum_{k,l\neq0}\iint\left|\Gamma(k,\eta)-\Gamma(l,\xi)\right|\left|M(k-l,\eta-\xi)\hat{U}^1_{\neq}(k-l,\eta-\xi)\right|\left|l\hat{\Omega}(l,\xi)\right|\left| M(k,\eta)\hat{Z}(k,\eta)\right|\mathrm{d}\xi\mathrm{d}\eta\\
		&+\sum_{k,l\neq0}\iint\left|\Gamma(k,\eta)-\Gamma(l,\xi)\right|\left|\hat{U}^1_{\neq}(k-l,\eta-\xi)\right| \left|lM(l,\xi)\hat{\Omega}(l,\xi)\right| \left|M(k,\eta)\hat{Z}(k,\eta)\right|\mathrm{d}\xi\mathrm{d}\eta\\
		&+\sum_{k,l\neq0}\iint\left|M(k,\eta)-M(l,\xi)\right|\left|\hat{U}^1_{\neq}(k-l,\eta-\xi)\right|\left|l\hat{Z}(l,\xi)\right|\left|M(k,\eta)\hat{Z}(k,\eta)\right|\mathrm{d}\xi\mathrm{d}\eta\\
		=&:I^1_{11}+I^1_{12}+I^1_{13},
	\end{align*}
	where we used triangle inequality and the fact
	\begin{equation}\label{bdm}
		M(k,\eta)\lesssim A(k-l,\eta-\xi)\langle k-l,\eta-\xi\rangle^N+A(l,\xi)\langle l,\xi\rangle^N.
	\end{equation}
	For $I^1_{11}$, by \eqref{bdu1}, Cauchy-Schwartz, Young's convolution inequality, and the definition of $M$ \eqref{m}, we have
	\begin{align*}
		I^1_{11}\lesssim&\sum_{k,l\neq0}\iint \left|M(k-l,\eta-\xi)\hat{U}^1_{\neq}(k-l,\eta-\xi)\right|\left|l\hat{\Omega}(l,\xi)\right|\left| \Gamma(k,\eta) M(k,\eta)\hat{Z}(k,\eta)\right|\mathrm{d}\xi\mathrm{d}\eta\\
		&+\sum_{k,l\neq0}\iint\left|M(k-l,\eta-\xi)\hat{U}^1_{\neq}(k-l,\eta-\xi)\right|\left|l\hat{Z}(l,\xi)\right|\left|M(k,\eta)\hat{Z}(k,\eta)\right|\mathrm{d}\xi\mathrm{d}\eta\\
		\lesssim&\sum_{k,l\neq0}\iint|M(k-l,\eta-\xi)\hat{Z}(k-l,\eta-\xi)||l\hat{\Omega}(l,\xi)|\left|\Gamma(k,\eta) M(k,\eta)\hat{Z}(k,\eta)\right|\mathrm{d}\xi\mathrm{d}\eta\\
		&+\sum_{k,l\neq0}\iint|M(k-l,\eta-\xi)\hat{Z}(k-l,\eta-\xi)||l\hat{Z}(l,\xi)|\left|M(k,\eta)\hat{Z}(k,\eta)\right|\mathrm{d}\xi\mathrm{d}\eta\\
		\lesssim&\ e^{-\delta_0\nu ^{\frac{1}{3}}t}\langle t\rangle\|MZ_{\neq}\|_{L^2}\frac{1}{\langle t\rangle}\|M\Omega\|_{L^2}\|\Gamma MZ\|_{L^2}+\|MZ\|^3_{L^2}\\
		\lesssim&\ \nu^{-\frac{1}{2}}\frac{1}{\langle t\rangle}\|M\Omega\|_{L^2}\nu^{\frac{1}{6}}\||\partial_X|^{\frac{1}{3}}MZ\|_{L^2}\|\Gamma MZ\|_{L^2}+\nu^{-\frac{1}{3}}\|MZ\|_{L^2}\nu^{\frac{1}{3}}\||\partial_X|^{\frac{1}{3}}MZ\|^2_{L^2}.
	\end{align*}
	Then by the bootstrap hypotheses, we obtain
	\begin{equation*}
		\int_{t_0}^{t}I_{11}^1\mathrm{d}\tau\lesssim\nu^{-\frac{1}{2}}\epsilon^3+\nu^{-\frac{1}{3}}\epsilon^3.
	\end{equation*}
	Next consider $I_{12}^1$. \eqref{comsym} implies
	\begin{align*}
		I^1_{12}=&\sum_{k,l\neq0}\iint\left|\Gamma(k,\eta)-\Gamma(l,\xi)\right|\left|\hat{U}^1_{\neq}(k-l,\eta-\xi)\right| \left|lM(l,\xi)\hat{\Omega}(l,\xi)\right| \left|M(k,\eta)\hat{Z}(k,\eta)\right|\mathrm{d}\xi\mathrm{d}\eta\\
		\lesssim&\sum_{k,l\neq0}\iint|(\eta-\xi)\hat{U}^1_{\neq}(k-l,\eta-\xi)||M(l,\xi)\hat{Z}(l,\xi)||\Gamma(k,\eta) M(k,\eta)\hat{Z}(k,\eta)|\mathrm{d}\xi\mathrm{d}\eta\\
		&+\sum_{k,l\neq0}\iint t|(k-l)\hat{U}^1_{\neq}(k-l,\eta-\xi)||M(l,\xi)\hat{Z}(l,\xi)||\Gamma(k,\eta) M(k,\eta)\hat{Z}(k,\eta)|\mathrm{d}\xi\mathrm{d}\eta\\
		&+\sum_{k,l\neq0}\iint|(k-l)\hat{U}^1_{\neq}(k-l,\eta-\xi)||M(l,\xi)\hat{Z}(l,\xi)||M(k,\eta)\hat{Z}(k,\eta)|\mathrm{d}\xi\mathrm{d}\eta\\
		\lesssim&\|MZ\|^2_{L^2}\|\Gamma MZ\|_{L^2}+e^{-\delta_0\nu ^{\frac{1}{3}}t}t\|MZ\|^2_{L^2}\|\Gamma MZ\|_{L^2}+\|MZ\|^3_{L^2}.
	\end{align*}
	By the bootstrap hypotheses, we have
	\begin{equation*}
		\int_{t_0}^{t}I_{12}^1\mathrm{d}\tau\lesssim\nu^{-\frac{1}{6}}\epsilon^3+\nu^{-\frac{1}{2}}\epsilon^3+\nu^{-\frac{1}{3}}\epsilon^3.
	\end{equation*}
	Focusing on $I^1_{13}$ now, we decompose $I^1_{13}=I^1_{131}+I^1_{132}$ with
	\begin{equation*}
		I^1_{13j}=\sum_{(k,l)\in R_j}\iint\left|M(k,\eta)-M(l,\xi)\right|\left|\hat{U}^1_{\neq}(k-l,\eta-\xi)\right|\left|l\hat{Z}(l,\xi)\right|\left|M(k,\eta)\hat{Z}(k,\eta)\right|\mathrm{d}\xi\mathrm{d}\eta,\ j=1,2,
	\end{equation*}
	where
	\begin{equation}\label{part}
		\begin{aligned} 
			&R_1=\{(k,l)\in \mathbb{Z}^2\big||k-l|\geq|l|/2\ \mathrm{or}\ \min(|k|,|l|)\leq C_2\lambda^{-\frac{1}{2}}\},\\
			&R_2=\{(k,l)\in \mathbb{Z}^2\big||k-l|<|l|/2,\  \min(|k|,|l|)>C_2\lambda^{-\frac{1}{2}}\} .
		\end{aligned}
	\end{equation}
	If $(k,l)\in R_1$, $Z(l,\xi)$ has low $x$ modes and it holds that
	\begin{equation}\label{estl}
		|l|\lesssim|k-l|+\lambda^{-\frac{1}{6}}|l|^{\frac{1}{3}}|k|^{\frac{1}{3}},
	\end{equation}
	which can be easily checked by considering $|k-l|\leq|l|/2$ and $\min(|k|,|l|)\leq C_2\lambda^{-\frac{1}{2}}$, or $|k-l|\geq|l|/2$ separately. Combining it with Cauchy-Schwartz, Young's convolution inequality, \eqref{bdu1}, and \eqref{bdm}, we get
	\begin{align*}
		I^1_{131}=&\sum_{(k,l)\in R_1}\iint\left|M(k,\eta)-M(l,\xi)\right|\left|\hat{U}^1_{\neq}(k-l,\eta-\xi)\right|\left|l\hat{Z}(l,\xi)\right|\left|M(k,\eta)\hat{Z}(k,\eta)\right|\mathrm{d}\xi\mathrm{d}\eta\\
		\lesssim&\sum_{(k,l)\in R_1}\iint\left|M(k-l,\eta-\xi)\hat{U}^1_{\neq}(k-l,\eta-\xi)\right|\left|l\hat{Z}(l,\xi)\right|\left|M(k,\eta)\hat{Z}(k,\eta)\right|\mathrm{d}\xi\mathrm{d}\eta\\
		&+\sum_{k,l\neq0}\iint\left|\hat{U}^1_{\neq}(k-l,\eta-\xi)\right|\left(|k-l|+\nu^{-\frac{1}{6}}|l|^{\frac{1}{3}}|k|^{\frac{1}{3}}\right)\left|M(l,\xi)\hat{Z}(l,\xi)\right|\left|M(k,\eta)\hat{Z}(k,\eta)\right|\mathrm{d}\xi\mathrm{d}\eta\\
		\lesssim&\ \|MZ\|^3_{L^2}+\nu^{-\frac{1}{6}}\|MZ\|_{L^2}\||\partial_{X}^{\frac{1}{3}}|MZ\|^2_{L^2}.
	\end{align*}
 	Thus by the bootstrap hypotheses, we obtain
	\begin{equation*}
		\int_{t_0}^{t}I_{131}^1\mathrm{d}\tau\lesssim\nu^{-\frac{1}{3}}\epsilon^3+\nu^{-\frac{1}{2}}\epsilon^3.
	\end{equation*}
	To control $I^1_{132}$, using \eqref{comm}, we get
	\begin{align*}
		I^1_{132}=&\sum_{(k,l)\in R_2}\iint\left|M(k,\eta)-M(l,\xi)\right|\left|\hat{U}^1_{\neq}(k-l,\eta-\xi)\right|\left|l\hat{Z}(l,\xi)\right|\left|M(k,\eta)\hat{Z}(k,\eta)\right|\mathrm{d}\xi\mathrm{d}\eta\\
		\lesssim&\sum_{(k,l)\in R_2}\iint\big(M(k-l,\eta-\xi)+M(l,\xi)\big)\big(\nu^{\frac{1}{3}}|k|^{-\frac{1}{3}}|\eta-\xi|+|l|^{-1}(\nu^{\frac{1}{2}}|\eta|+1)|k-l|\big)\\
		&\times\left|\hat{U}^1_{\neq}(k-l,\eta-\xi)\right|\left|l\hat{Z}(l,\xi)\right|\left|M(k,\eta)\hat{Z}(k,\eta)\right|\mathrm{d}\xi\mathrm{d}\eta=:\sum_{i=1}^{6}I^1_{132i},
	\end{align*}
	where $I^1_{132i}$ are obtained by expanding $$\big(M(k-l,\eta-\xi)+M(l,\xi)\big)\big(\nu^{\frac{1}{3}}|k|^{-\frac{1}{3}}|\eta-\xi|+|l|^{-1}(\nu^{\frac{1}{2}}|\eta|+1)|k-l|\big).$$ Now we treat each term $I^1_{132i}$ for $i=1,2,3,4,5,6$. First, by the fact $\min(|k|,|l|)>C_2\nu^{-\frac{1}{2}}$ and \eqref{bdu1}, we obtain
	\begin{align*}
		I^1_{1321}=&\sum_{(k,l)\in R_2}\iint\nu^{\frac{1}{3}}|k|^{-\frac{1}{3}}|\eta-\xi|\left|M\hat{U}^1_{\neq}(k-l,\eta-\xi)\right|\left|l\hat{Z}(l,\xi)\right|\left|M(k,\eta)\hat{Z}(k,\eta)\right|\mathrm{d}\xi\mathrm{d}\eta\\
		\lesssim &\sum_{(k,l)\in R_2}\iint\nu^{\frac{1}{2}}\big(|\eta-\xi-(k-l)t|+t|k-l|\big)\left|M\hat{U}^1_{\neq}(k-l,\eta-\xi)\right|\left|l\hat{Z}(l,\xi)\right|\left|M(k,\eta)\hat{Z}(k,\eta)\right|\mathrm{d}\xi\mathrm{d}\eta\\
		\lesssim&\ \langle t\rangle e^{-\delta_0\nu ^{\frac{1}{3}}t}\nu^{\frac{1}{2}}\|M\nabla_LZ\|_{L^2}\|MZ\|^2_{L^2}.
	\end{align*}
	Next, we use the fact $|k-l|\leq|l|/2$ and \eqref{bdu1} to arrive at
	\begin{align*}
		I^1_{1322}=&\sum_{(k,l)\in R_2}\iint|l|^{-1}\nu^{\frac{1}{2}}|\eta||k-l|\left|\widehat{(MU^1_{\neq})}(k-l,\eta-\xi)\right|\left|l\hat{Z}(l,\xi)\right|\left|M(k,\eta)\hat{Z}(k,\eta)\right|\mathrm{d}\xi\mathrm{d}\eta\\
		\lesssim &\sum_{(k,l)\in R_2}\iint\nu^{\frac{1}{2}}\left|\widehat{(MU^1_{\neq})}(k-l,\eta-\xi)\right|\left|l\hat{Z}(l,\xi)\right|\big(|\eta-kt|+t|k|\big)\left|M(k,\eta)\hat{Z}(k,\eta)\right|\mathrm{d}\xi\mathrm{d}\eta\\
		\lesssim&\ \langle t\rangle e^{-\delta_0\nu ^{\frac{1}{3}}t}\nu^{\frac{1}{2}}\|MZ\|^2_{L^2}\|M\nabla_LZ\|_{L^2},
	\end{align*}
	and
	\begin{align*}
		I^1_{1323}=&\sum_{(k,l)\in R_2}\iint|l|^{-1}|k-l|\left|\widehat{(MU^1_{\neq})}(k-l,\eta-\xi)\right|\left|l\hat{Z}(l,\xi)\right|\left|M(k,\eta)\hat{Z}(k,\eta)\right|\mathrm{d}\xi\mathrm{d}\eta\\
		\lesssim&\ \|MZ\|^3_{L^2}.
	\end{align*}
	Then by $|k|^{-\frac{1}{3}}|l|\lesssim|l|^{\frac{1}{3}}|k|^{\frac{1}{3}}$, we have
	\begin{align*}
		I^1_{1324}=&\sum_{(k,l)\in R_2}\iint\nu^{\frac{1}{3}}|k|^{-\frac{1}{3}}|\eta-\xi|\left|\hat{U}^1_{\neq}(k-l,\eta-\xi)\right|\left|lM(l,\xi)\hat{Z}(l,\xi)\right|\left|M(k,\eta)\hat{Z}(k,\eta)\right|\mathrm{d}\xi\mathrm{d}\eta\\
		\lesssim &\sum_{(k,l)\in R_2}\iint\nu^{\frac{1}{3}}|l|^{\frac{1}{3}}|k|^{\frac{1}{3}}\left|(\eta-\xi)\hat{U}^1_{\neq}(k-l,\eta-\xi)\right|\left|(\widehat{MZ})(l,\xi)\right|\left|(\widehat{MZ})(k,\eta)\right|\mathrm{d}\xi\mathrm{d}\eta\\
		\lesssim&\ \nu^{\frac{1}{3}}\|MZ\|_{L^2}\||\partial_X|^{\frac{1}{3}}MZ\|^2_{L^2}.
	\end{align*}
	Similar to $I^1_{1322}$ and $I^1_{1323}$, it holds
	\begin{align*}
		I^1_{1325}+I_{1326}^1=&\sum_{(k,l)\in R_2}\iint\big(|l|^{-1}(\nu^{\frac{1}{2}}|\eta|+1)|k-l|\big)\left|\hat{U}^1_{\neq}(k-l,\eta-\xi)\right|\left|l(\widehat{MZ})(l,\xi)\right|\left|(\widehat{MZ})(k,\eta)\right|\mathrm{d}\xi\mathrm{d}\eta\\
		\lesssim&\ \langle t\rangle e^{-\delta_0\nu ^{\frac{1}{3}}t}\nu^{\frac{1}{2}}\|MZ\|^2_{L^2}\|M\nabla_LZ\|_{L^2}+\|MZ\|^3_{L^2}.
	\end{align*}
	Therefore, by the bootstrap hypotheses, we obtain
	\begin{equation*}
		\int_{t_0}^{t}I_{132}^1\mathrm{d}\tau\lesssim\nu^{-\frac{1}{2}}\epsilon^3+\nu^{-\frac{1}{2}}\epsilon^3+\nu^{-\frac{1}{3}}\epsilon^3+\epsilon^3+\nu^{-\frac{1}{2}}\epsilon^3+\nu^{-\frac{1}{3}}\epsilon^3\lesssim\nu^{-\frac{1}{2}}\epsilon^3.
	\end{equation*}
	Putting together the estimates above, we conclude
	\begin{equation}\label{estT11}
		\int_{t_0}^{t}I_{1}^1\mathrm{d}\tau\lesssim\nu^{-\frac{1}{2}}\epsilon^3.
	\end{equation}
	\textbf{Treament of $I^1_2$}
	
	\noindent By triangle inequality, we get
	\begin{align*}
		I^1_2=&\left|\left\langle\left[M\Gamma,U^1_0\partial_{X}\right]\Omega_{\neq},MZ\right\rangle\right|\\
		=&\left|\sum_{k\neq0}\iint\big[M(k,\eta)\Gamma(k,\eta)-M(k,\xi)\Gamma(k,\xi)\big]\hat{U}^1_{0}(\eta-\xi) ik\hat{\Omega}(k,\xi) M(k,\eta)\overline{\hat{Z}}(k,\eta)\mathrm{d}\xi\mathrm{d}\eta\right|\\
		\lesssim&\sum_{k\neq0}\iint|\Gamma(k,\eta)-\Gamma(k,\xi)||\hat{U}^1_{0}(\eta-\xi)||k M(k,\xi)\hat{\Omega}(k,\xi)|| M(k,\eta)\hat{Z}(k,\eta)|\mathrm{d}\xi\mathrm{d}\eta\\
		&+\sum_{k\neq0}\iint|M(k,\eta)-M(k,\xi)|\Gamma(k,\eta)|\hat{U}^1_{0}(\eta-\xi)||k \hat{\Omega}(k,\xi)|| M(k,\eta)\hat{Z}(k,\eta)|\mathrm{d}\xi\mathrm{d}\eta=:I^1_{21}+I^1_{22}.
	\end{align*}
	For $I^1_{21}$, applying \eqref{comsym} with $k=l$, Cauchy-Schwartz, and Young's convolution inequality, we have
	\begin{align*}
		I^1_{21}=&\sum_{k\neq0}\iint|\Gamma(k,\eta)-\Gamma(k,\xi)||\hat{U}^1_{0}(\eta-\xi)||k M(k,\xi)\hat{\Omega}(k,\xi)|| M(k,\eta)\hat{Z}(k,\eta)|\mathrm{d}\xi\mathrm{d}\eta\\
		\lesssim&\sum_{k\neq0}\iint|(\eta-\xi)\hat{U}^1_{0}(\eta-\xi)|| M(k,\xi)\hat{Z}(k,\xi)|| \Gamma(k,\eta) M(k,\eta)\hat{Z}(k,\eta)|\mathrm{d}\xi\mathrm{d}\eta\\
		\lesssim&\|U^1_0\|_{H^N}\|MZ\|_{L^2}\|\Gamma MZ\|_{L^2}.
	\end{align*}
	Using the bootstrap hypotheses gives
	\begin{equation*}
		\int_{t_0}^{t}I_{21}^1\mathrm{d}\tau\lesssim\nu^{-\frac{1}{6}}\epsilon^3.
	\end{equation*}
	Turning to $I^1_{22}$, by \eqref{commeq} and 
	\begin{equation*}
	\Gamma(k,\xi)^{-1}\lesssim1+\left|\frac{\xi}{k}-t\right|\leq\left(1+\left|\frac{\eta}{k}-t\right|\right)+\frac{|\eta-\xi|}{|k|},
	\end{equation*}
	 we get
	\begin{align*}
		I^1_{22}=&\sum_{k\neq0}\iint|M(k,\eta)-M(k,\xi)|\Gamma(k,\eta)|\hat{U}^1_{0}(\eta-\xi)||k \hat{\Omega}(k,\xi)|| M(k,\eta)\hat{Z}(k,\eta)|\mathrm{d}\xi\mathrm{d}\eta\\
		\lesssim&\sum_{k\neq0}\iint\big[\big(|k|^{-1}+\nu^{\frac{1}{3}}|k|^{-\frac{1}{3}}\big)|\eta-\xi|M(k,\xi)+e^{\delta_0\nu^{\frac{1}{3}}t}\langle|k,\xi|\rangle^{N-1}|\eta-\xi|+e^{\delta_0\nu^{\frac{1}{3}}t}\langle|\eta-\xi|\rangle^{N}\big]\\
		&\times(1+|\eta/k-t|)\Gamma(k,\xi)\Gamma(k,\eta) |\hat{U}^1_{0}(\eta-\xi)||k \hat{\Omega}(k,\xi)|| (\widehat{MZ})(k,\eta)|\mathrm{d}\xi\mathrm{d}\eta\\
		&+\sum_{k\neq0}\iint\big[\big(|k|^{-1}+\nu^{\frac{1}{3}}|k|^{-\frac{1}{3}}\big)|\eta-\xi|M(k,\xi)+e^{\delta_0\nu^{\frac{1}{3}}t}\langle|k,\xi|\rangle^{N-1}|\eta-\xi|+e^{\delta_0\nu^{\frac{1}{3}}t}\langle|\eta-\xi|\rangle^{N}\big]\\
		&\times|\eta-\xi||k|^{-1}\Gamma(k,\xi)\Gamma(k,\eta) |\hat{U}^1_{0}(\eta-\xi)||k \hat{\Omega}(k,\xi)|| (\widehat{MZ})(k,\eta)|\mathrm{d}\xi\mathrm{d}\eta\\
		=&:I^1_{221}+I^1_{222}.
	\end{align*}
	For $I^1_{221}$, we have
	\begin{align*}
		I^1_{221}=&\sum_{k\neq0}\iint\big[\big(|k|^{-1}+\nu^{\frac{1}{3}}|k|^{-\frac{1}{3}}\big)|\eta-\xi|M(k,\xi)+e^{\delta_0\nu^{\frac{1}{3}}t}\langle|k,\xi|\rangle^{N-1}|\eta-\xi|+e^{\delta_0\nu^{\frac{1}{3}}t}\langle|\eta-\xi|\rangle^{N}\big]\\
		&\times(1+|\eta/k-t|)\Gamma(k,\xi)\Gamma(k,\eta) |\hat{U}^1_{0}(\eta-\xi)||k \hat{\Omega}(k,\xi)|| (\widehat{MZ})(k,\eta)|\mathrm{d}\xi\mathrm{d}\eta\\
		=&\sum_{k\neq0}\iint\big[\big(|k|^{-1}+\nu^{\frac{1}{3}}|k|^{-\frac{1}{3}}\big)|\eta-\xi|M(k,\xi)+e^{\delta_0\nu^{\frac{1}{3}}t}\langle|k,\xi|\rangle^{N-1}|\eta-\xi|+e^{\delta_0\nu^{\frac{1}{3}}t}\langle|\eta-\xi|\rangle^{N}\big]\\ &\times|\hat{U}^1_{0}(\eta-\xi)||k \hat{Z}(k,\xi)|| (\widehat{MZ})(k,\eta)|\mathrm{d}\xi\mathrm{d}\eta\\
		\lesssim&\ \|U^1_0\|_{H^N}\|MZ\|^2_{L^2}+\nu^{\frac{1}{3}}\|U^1_0\|_{H^N}\||\partial_X|^{\frac{1}{3}}MZ\|^2_{L^2}.
	\end{align*}
	Similarly, it holds
	\begin{align*}
		I^1_{222}=&\sum_{k\neq0}\iint\big[\big(|k|^{-1}+\nu^{\frac{1}{3}}|k|^{-\frac{1}{3}}\big)|\eta-\xi|M(k,\xi)+e^{\delta_0\nu^{\frac{1}{3}}t}\langle|k,\xi|\rangle^{N-1}|\eta-\xi|+e^{\delta_0\nu^{\frac{1}{3}}t}\langle|\eta-\xi|\rangle^{N}\big]\\
		&\times|\eta-\xi||k|^{-1}\Gamma(k,\xi)\Gamma(k,\eta) |\hat{U}^1_{0}(\eta-\xi)||k \hat{\Omega}(k,\xi)|| (\widehat{MZ})(k,\eta)|\mathrm{d}\xi\mathrm{d}\eta\\
		=&\sum_{k\neq0}\iint\big[\big(|k|^{-1}+\nu^{\frac{1}{3}}|k|^{-\frac{1}{3}}\big)|\eta-\xi|M(k,\xi)+e^{\delta_0\nu^{\frac{1}{3}}t}\langle|k,\xi|\rangle^{N-1}|\eta-\xi|+e^{\delta_0\nu^{\frac{1}{3}}t}\langle|\eta-\xi|\rangle^{N}\big]\\
		&\times|(\eta-\xi)\hat{U}^1_{0}(\eta-\xi)||\hat{Z}(k,\xi)|| (\widehat{\Gamma MZ})(k,\eta)|\mathrm{d}\xi\mathrm{d}\eta\\
		\lesssim&\ \|U^1_0\|_{H^N}\|MZ\|^2_{L^2}+\nu^{\frac{1}{3}}\|U^1_0\|_{H^N}\||\partial_X|^{\frac{1}{3}}MZ\|^2_{L^2}+\|\partial_YU^1_0\|_{H^N}\|MZ\|_{L^2}\|\Gamma MZ\|_{L^2}.
	\end{align*}
	Then by the bootstrap hypotheses, we obtain
	\begin{equation*}
		\int_{t_0}^{t}I_{22}^1\mathrm{d}\tau\lesssim\nu^{-\frac{1}{3}}\epsilon^3+\epsilon^3+\nu^{-\frac{1}{2}}\epsilon^3.
	\end{equation*}
	Therefore, it holds that
	\begin{equation}\label{estT12}
		\int_{t_0}^{t}I_{2}^1\mathrm{d}\tau\lesssim\nu^{-\frac{1}{2}}\epsilon^3.
	\end{equation}
	\textbf{Treament of $I^1_3$}
	
	\noindent In this case, the commutator can not gain us anything, and hence we use \eqref{bdm} to get
	\begin{align*}
		I^1_3=&|\left\langle\left[M\Gamma,U^2_{\neq}\partial_Y^L\right]\Omega_{\neq},MZ\right\rangle|\\
		=&\left|\sum_{k,l\neq0}\iint\left[M(k,\eta)\Gamma(k,\eta)-M(l,\xi)\Gamma(l,\xi)\right]\hat{U}^2_{\neq}(k-l,\eta-\xi) i(\xi-lt)\hat{\Omega}_{\neq}(l,\xi) M(k,\eta)\overline{\hat{Z}}(k,\eta)\mathrm{d}\xi\mathrm{d}\eta\right|\\
		\lesssim&  \sum_{k,l\neq0}\iint\left|M(k-l,\eta-\xi)\hat{U}^2_{\neq}(k-l,\eta-\xi)\right|\left|(\xi-lt)\hat{\Omega}_{\neq}(l,\xi)\right|\left| \Gamma(k,\eta) M(k,\eta)\hat{Z}(k,\eta)\right|\mathrm{d}\xi\mathrm{d}\eta\\
		&+\sum_{k,l\neq0}\iint\left|\hat{U}^2_{\neq}(k-l,\eta-\xi)\right| \left|(\xi-lt)M(l,\xi)\hat{\Omega}_{\neq}(l,\xi)\right| \left|\Gamma(k,\eta) M(k,\eta)\hat{Z}(k,\eta)\right|\mathrm{d}\xi\mathrm{d}\eta\\
		&+\sum_{k,l\neq0}\iint\left|\hat{U}^2_{\neq}(k-l,\eta-\xi)\right|\left|(\xi-lt)M(l,\xi)\hat{Z}(l,\xi)\right|\left|M(k,\eta)\hat{Z}(k,\eta)\right|\mathrm{d}\xi\mathrm{d}\eta=:I^1_{31}+I^1_{32}+I^1_{33}.
	\end{align*}
	First focus on $I^1_{31}$. By
	\eqref{bdu2}, the fact $\Gamma(k,\eta)=|k|/\sqrt{p(k,\eta)}$, and
 	\begin{equation*}\sqrt{p(l,\xi)}\lesssim\sqrt{p(k-l,\eta-\xi)}+\sqrt{p(k,\eta)},\end{equation*} we have
	\begin{align*}
		I^1_{31}=&\sum_{k,l\neq0}\iint\left|M(k-l,\eta-\xi)\hat{U}^2_{\neq}(k-l,\eta-\xi)\right|\left|(\xi-lt)\hat{\Omega}_{\neq}(l,\xi)\right|\left| \Gamma(k,\eta) M(k,\eta)\hat{Z}(k,\eta)\right|\mathrm{d}\xi\mathrm{d}\eta\\
		\lesssim&\sum_{k,l\neq0}\iint\left|\frac{(\widehat{MZ})(k-l,\eta-\xi)}{\sqrt{p(k-l,\eta-\xi)}}\right|\left|(\xi-lt)\sqrt{p(l,\xi)}\hat{Z}_{\neq}(l,\xi)\right|\left|  \frac{k(\widehat{MZ})(k,\eta)}{\sqrt{p(k,\eta)}}\right|\mathrm{d}\xi\mathrm{d}\eta\\
		\lesssim&\sum_{k,l\neq0}\iint\left|(\widehat{MZ})(k-l,\eta-\xi)\right|\left|(\xi-lt)\hat{Z}_{\neq}(l,\xi)\right|\left|  (\widehat{\Gamma MZ})(k,\eta)\right|\mathrm{d}\xi\mathrm{d}\eta\\
		&+\sum_{k,l\neq0}\iint\left|\frac{|k-l|}{\sqrt{p(k-l,\eta-\xi)}}(\widehat{MZ})(k-l,\eta-\xi)\right|\left|(\xi-lt)\hat{Z}_{\neq}(l,\xi)\right|\left|  (\widehat{MZ})(k,\eta)\right|\mathrm{d}\xi\mathrm{d}\eta\\
		&+\sum_{k,l\neq0}\iint\left|\frac{(\widehat{MZ})(k-l,\eta-\xi)}{\sqrt{p(k-l,\eta-\xi)}}\right|\left|(\xi-lt)l\hat{Z}_{\neq}(l,\xi)\right|\left|  (\widehat{MZ})(k,\eta)\right|\mathrm{d}\xi\mathrm{d}\eta\\
		\lesssim&\ \|MZ\|_{L^2}\|\nabla_LMZ\|_{L^2}\|\Gamma MZ\|_{L^2}.
	\end{align*}
	Due to the bootstrap hypotheses, we obtain
	\begin{equation*}
		\int_{t_0}^{t}I_{31}^1\mathrm{d}\tau\lesssim\nu^{-\frac{1}{2}}\epsilon^3.
	\end{equation*}
	For $I^1_{32}$, using \eqref{comsym}, we deduces
	\begin{align*}
		I^1_{32}=&\sum_{k,l\neq0}\iint\left|\hat{U}^2_{\neq}(k-l,\eta-\xi)\right| \left|(\xi-lt)M(l,\xi)\hat{\Omega}_{\neq}(l,\xi)\right| \left|\Gamma(k,\eta) M(k,\eta)\hat{Z}(k,\eta)\right|\mathrm{d}\xi\mathrm{d}\eta\\
		\lesssim&\sum_{k,l\neq0}\iint|\Gamma(k,\eta)-\Gamma(l,\xi)|\left|\hat{U}^2_{\neq}(k-l,\eta-\xi)\right| \left|(\xi-lt)(\widehat{M\Omega_{\neq}})(l,\xi)\right| \left| (\widehat{MZ})(k,\eta)\right|\mathrm{d}\xi\mathrm{d}\eta\\
		&+\sum_{k,l\neq0}\iint\left|\hat{U}^2_{\neq}(k-l,\eta-\xi)\right| \left|(\xi-lt)(\widehat{MZ})(l,\xi)\right| \left| (\widehat{MZ})(k,\eta)\right|\mathrm{d}\xi\mathrm{d}\eta\\
		\lesssim&\sum_{k,l\neq0}\iint\frac{\Gamma(l,\xi)}{|l|}\Gamma(k,\eta)\left|(\eta-\xi)-(k-l)t\right|\left|\hat{U}^2_{\neq}(k-l,\eta-\xi)\right| \left|(\xi-lt)(\widehat{M\Omega_{\neq}})(l,\xi)\right|\left| (\widehat{MZ})(k,\eta)\right|\mathrm{d}\xi\mathrm{d}\eta\\
		&+\sum_{k,l\neq0}\iint\frac{\Gamma(l,\xi)}{|l|}\left|k-l\right|\left|\hat{U}^2_{\neq}(k-l,\eta-\xi)\right| \left|(\xi-lt)(\widehat{M\Omega_{\neq}})(l,\xi)\right|\left| (\widehat{MZ})(k,\eta)\right|\mathrm{d}\xi\mathrm{d}\eta\\
		&+\sum_{k,l\neq0}\iint\left|\hat{U}^2_{\neq}(k-l,\eta-\xi)\right| \left|(\xi-lt)(\widehat{MZ})(l,\xi)\right| \left| (\widehat{MZ})(k,\eta)\right|\mathrm{d}\xi\mathrm{d}\eta=\sum_{j=1}^{3}I^1_{32j}.
	\end{align*}
	To control $I^1_{321}$, by \eqref{bdu2}, we obtain
	\begin{align*}
		I^1_{321}=&\sum_{k,l\neq0}\iint\frac{\Gamma(l,\xi)}{|l|}\Gamma(k,\eta)\left|(\eta-\xi)-(k-l)t\right|\left|\hat{U}^2_{\neq}(k-l,\eta-\xi)\right| \left|(\xi-lt)(\widehat{M\Omega_{\neq}})(l,\xi)\right|\left| (\widehat{MZ})(k,\eta)\right|\mathrm{d}\xi\mathrm{d}\eta\\
		\lesssim&\sum_{k,l\neq0}\iint\left|\left|(\eta-\xi)-(k-l)t\right|\hat{U}^2_{\neq}(k-l,\eta-\xi)\right| \left|(\xi-lt)(\widehat{MZ})(l,\xi)\right|\left| (\widehat{\Gamma MZ})(k,\eta)\right|\mathrm{d}\xi\mathrm{d}\eta\\
		\lesssim&\ \|MZ\|_{L^2}\|\nabla_LMZ\|_{L^2}\|\Gamma MZ\|_{L^2}.
	\end{align*}
	We next turn to $I^1_{322}$ and $I^1_{323}$, and use \eqref{bdu2} to arrive at
	\begin{align*}
		I^1_{322}+I^1_{323}=&\sum_{k,l\neq0}\iint\frac{\Gamma(l,\xi)}{|l|}|k-l|\left|\hat{U}^2_{\neq}(k-l,\eta-\xi)\right| \left|(\xi-lt)(\widehat{M\Omega_{\neq}})(l,\xi)\right|\left| (\widehat{MZ})(k,\eta)\right|\mathrm{d}\xi\mathrm{d}\eta\\
		&+\sum_{k,l\neq0}\iint\left|\hat{U}^2_{\neq}(k-l,\eta-\xi)\right|\left|(\xi-lt)(\widehat{MZ})(l,\xi)\right|\left|(\widehat{MZ})(k,\eta)\right|\mathrm{d}\xi\mathrm{d}\eta\\
		\lesssim&\sum_{k,l\neq0}\iint\left|\Gamma (k-l,\eta-\xi)\hat{Z}(k-l,\eta-\xi)\right| \left|(\xi-lt)(\widehat{MZ})(l,\xi)\right|\left| (\widehat{MZ})(k,\eta)\right|\mathrm{d}\xi\mathrm{d}\eta\\
		\lesssim&\ \|\Gamma MZ\|_{L^2}\|\nabla_LMZ\|_{L^2}\|MZ\|_{L^2}.
	\end{align*}
	Noting $I^1_{33}=I^1_{323}$, by bootstrap hypotheses, we obtain
	\begin{equation*}
		\int_{t_0}^{t}I_{31}^1+I_{32}^1+I_{33}^1\mathrm{d}\tau\lesssim\nu^{-\frac{1}{2}}\epsilon^3.
	\end{equation*}
	Furthermore, it holds that
	\begin{equation}\label{estT13}
		\int_{t_0}^{t}I_{3}^1\mathrm{d}\tau\lesssim\nu^{-\frac{1}{2}}\epsilon^3.
	\end{equation}
	\textbf{Treament of $I^1_4$}
	
	\noindent By \eqref{bdu2}, $\hat{\Omega}_0=i\xi\hat{U^1_0}$, and \eqref{bdu2t}, we get
	\begin{align*}
		I^1_4=&\left|\left\langle\left[M\Gamma,U^2_{\neq}\partial_Y^L\right]\Omega_{0},MZ\right\rangle\right|\\
		\leq&\sum_{k\neq0}\iint|M(k,\eta)\Gamma(k,\eta)||\hat{U}^2(k,\eta-\xi)||\xi \hat{\Omega}_0(\xi)|| M(k,\eta)\hat{Z}(k,\eta)|\mathrm{d}\xi\mathrm{d}\eta\\
		\lesssim&\sum_{k\neq0}\iint|M(k,\eta-\xi)\hat{U}^2(k,\eta-\xi)||\xi \hat{\Omega}_0(\xi)||\Gamma(k,\eta) M(k,\eta)\hat{Z}(k,\eta)|\mathrm{d}\xi\mathrm{d}\eta
		\\&+\sum_{k\neq0}\iint e^{\delta_0\nu^{\frac{1}{3}}t}|\hat{U}^2(k,\eta-\xi)||\xi \langle \xi\rangle^N\hat{\Omega}_0(\xi)||\Gamma(k,\eta) M(k,\eta)\hat{Z}(k,\eta)|\mathrm{d}\xi\mathrm{d}\eta\\
		\lesssim&\sum_{k\neq0}\iint|\Gamma(k,\eta) M(k,\eta)\hat{Z}(k,\eta-\xi)||\xi^2 \hat{U}^1_0(\xi)||\Gamma(k,\eta) M(k,\eta)\hat{Z}(k,\eta)|\mathrm{d}\xi\mathrm{d}\eta\\
		&+\sum_{k\neq0}\iint|e^{\delta_0\nu^{\frac{1}{3}}t}\langle|k,\eta-\xi|\rangle\hat{Z}(k,\eta-\xi)|\frac{1}{\langle t\rangle}\left| \langle \xi\rangle^N\xi\hat{\Omega}_0(\xi)\right||\Gamma(k,\eta) M(k,\eta)\hat{Z}(k,\eta)|\mathrm{d}\xi\mathrm{d}\eta\\
		\lesssim&\ \|U^1_0\|_{H^N}\|\Gamma MZ\|^2_{L^2}+\|MZ\|_{L^2}\frac{1}{\langle t\rangle}\|\partial_Y\Omega_0\|_{H^N}\|\Gamma MZ\|_{L^2}.
	\end{align*}
	Thus the bootstrap hypotheses imply that
	\begin{equation}\label{estT14}
		\int_{t_0}^{t}I_{4}^1\mathrm{d}\tau\lesssim\nu^{-\frac{1}{2}}\epsilon^3.
	\end{equation}
	Therefore, from the bounds \eqref{estT11}-\eqref{estT14}, we conclude that
	\begin{equation}\label{estTsym}
		\int_{t_0}^{t}T\mathrm{d}\tau\lesssim\nu^{-\frac{1}{2}}\epsilon^3.
	\end{equation}
	\subsubsection{Estimates on $\overline{N}$}
	Recall
	\begin{equation*}
		\begin{aligned}
			\overline{N}=&\left|\left\langle\Gamma M(U\cdot\nabla_{L}\Omega),\frac{2\partial_Y^L}{\beta \Delta_L}MQ\right\rangle_{L^2_{X,Y}}\right|+\left|\left\langle\Gamma M(U\cdot\nabla_{L}J),\frac{2\partial_Y^L}{\beta \Delta_L}MZ\right\rangle_{L^2_{X,Y}}\right|\\
			&+\left|\left\langle\Gamma M(B\cdot\nabla_{L}J),\frac{2\partial_Y^L}{\beta \Delta_L}MQ\right\rangle_{L^2_{X,Y}}\right|
			+\left|\left\langle\Gamma M(B\cdot\nabla_{L}\Omega),\frac{2\partial_Y^L}{\beta \Delta_L}MZ\right\rangle_{L^2_{X,Y}}\right|.
		\end{aligned}
	\end{equation*}
	Here we only treat the first term
	$$ I^2=\left|\left\langle\Gamma M(U\cdot\nabla_{L}\Omega),\frac{2\partial_Y^L}{\beta \Delta_L}MQ\right\rangle\right|,$$ and the other terms are treated similarly. In contrast to $I^1$, we do not need to use commutator estimate, but to extract a factor $\Gamma$ from the symbol $\frac{2\partial_Y^L}{\beta \Delta_L}$,
	\begin{equation*}
		\left|\frac{2(\eta-kt)}{i\beta p(k,\eta)}\right|\lesssim\frac{\Gamma(k,\eta)}{|k|}.
	\end{equation*}
	Similar as $I^1$, we decompose $I^2$ into four parts
	\begin{align*}
		I^2=&\left|\sum_{k,l}\iint (\Gamma M)(k,\eta)\left(\hat{U}(k-l,\eta-\xi)\cdot(l,\xi-lt)\hat{\Omega}(l,\xi)\right)\frac{2(\eta-kt)}{-i\beta p}M(k,\eta)\bar{\hat{Q}}(k,\eta)\mathrm{d}\xi\mathrm{d}\eta\right|\\
		\lesssim&\sum_{k,l\neq0}\iint |(\Gamma M)(k,\eta)|\left|\hat{U}^1_{\neq}(k-l,\eta-\xi)\right||l\hat{\Omega}(l,\xi)||(\widehat{\Gamma MQ})(k,\eta)|\mathrm{d}\xi\mathrm{d}\eta\\
		&+\sum_{k\neq0}\iint |(\Gamma M)(k,\eta)|\left|\hat{U}^1_0(\eta-\xi)\right||k\hat{\Omega}(k,\xi)||(\widehat{\Gamma MQ})(k,\eta)|\mathrm{d}\xi\mathrm{d}\eta\\
		&+\sum_{k,l\neq0}\iint |(\Gamma M)(k,\eta)|\left|\hat{U}^2(k-l,\eta-\xi)\right||(\xi-lt)\hat{\Omega}_{\neq}(l,\xi)||(\widehat{\Gamma MQ})(k,\eta)|\mathrm{d}\xi\mathrm{d}\eta\\
		&+\sum_{k\neq0}\iint |(\Gamma M)(k,\eta)|\left|\hat{U}^2(k,\eta-\xi)\right||\xi\hat{\Omega}_{0}(\xi)||(\widehat{\Gamma MQ})(k,\eta)|\mathrm{d}\xi\mathrm{d}\eta=\sum_{i=1}^{4}I^2_i.
	\end{align*}
	For $I^2_1$, using \eqref{bdm} and \eqref{bdu1}, and the fact
	\begin{equation*}
		|l\hat{\Omega}(l,\xi)|\lesssim|l(1+|\xi/l-t|)\hat{Z}(l,\xi)|\lesssim|(\widehat{\nabla_LZ})(l,\xi)|,
	\end{equation*}
	we deduce
	\begin{align*}
		I^2_1=&\sum_{k,l\neq0}\iint |\Gamma(k,\eta) M(k,\eta)|\left|\hat{U}^1_{\neq}(k-l,\eta-\xi)\right||l\hat{\Omega}(l,\xi)||(\widehat{\Gamma MQ})(k,\eta)|\mathrm{d}\xi\mathrm{d}\eta\\
		\lesssim&\sum_{k,l\neq0}\iint \left|\widehat{MZ}(k-l,\eta-\xi)\right||(\widehat{\nabla_LZ})(l,\xi)||(\widehat{\Gamma MQ})(k,\eta)|\mathrm{d}\xi\mathrm{d}\eta\\
		&+\sum_{k,l\neq0}\iint\left|\hat{Z}(k-l,\eta-\xi)\right||(\widehat{\nabla_LMZ})(l,\xi)||(\widehat{\Gamma MQ})(k,\eta)|\mathrm{d}\xi\mathrm{d}\eta\\
		\lesssim&\ \|MZ\|_{L^2}\|\nabla_LMZ\|_{L^2}\|\Gamma MQ\|_{L^2}.
	\end{align*}
	Similarly, it holds
	\begin{align*}
		I^2_2=&\sum_{k\neq0}\iint |\Gamma(k,\eta) M(k,\eta)|\left|\hat{U}^1_0(\eta-\xi)\right||k\hat{\Omega}(k,\xi)||(\widehat{\Gamma MQ})(k,\eta)|\mathrm{d}\xi\mathrm{d}\eta\\
		\lesssim&\ \|U^1_0\|_{H^N}\|\nabla_LMZ\|_{L^2}\|\Gamma MQ\|_{L^2}.
	\end{align*}
	To estimate $I^2_3$ and $I^2_4$, proceeding analogously to the treatment of $I^1_3$ and $I^1_4$, we have
	\begin{align*}
		I^2_3+I^2_4\lesssim&\|MZ\|_{L^2}\|\nabla_LMZ\|_{L^2}\|\Gamma MQ\|_{L^2}+\|\Gamma MZ\|_{L^2}\|\nabla_LMZ\|_{L^2}\|MQ\|_{L^2}\\
		+&\|\Gamma MZ\|_{L^2}\|U^1_0\|_{H^N}\|\Gamma MQ\|_{L^2}+\|MZ\|\frac{1}{\langle t\rangle}\|\partial_Y\Omega_0\|_{H^N}\|\Gamma MQ\|_{L^2}.
	\end{align*}
	Then by the bootstrap hypotheses, it holds that
	\begin{equation}\label{estMsym}
		\int_{t_0}^{t}N\mathrm{d}\tau\lesssim\nu^{-\frac{1}{2}}\epsilon^3.
	\end{equation}
	\subsubsection{Estimates on $\widetilde{T}$}\label{esttt}
	Here again, we only estimate the first term of \eqref{omegtran}, i.e., $$I^3=Re\left\langle M(U\cdot\nabla_{L}\Omega),M\Omega\right\rangle.$$
	Using the cancellation
	\begin{equation*}
		Re\left\langle U\cdot\nabla_{L}M\Omega,M\Omega\right\rangle=0,
	\end{equation*}
	we divide $I^3$ into three parts:
	\begin{align*}
		I^3=&\left\langle\left[M,U\cdot\nabla_L\right]\Omega,M\Omega\right\rangle\\
		=&\left\langle\left[M,U^1_{\neq}\partial_{X}\right]\Omega_{\neq},M\Omega\right\rangle
		+\left\langle\left[M,U^1_0\partial_{X}\right]\Omega_{\neq},M\Omega\right\rangle+\left\langle\left[M,U^2_{\neq}\partial_Y^L\right]\Omega,M\Omega\right\rangle
		=:\sum_{i=1}^{3}I^3_i.
	\end{align*}
	For $I^3_1,I^3_2$, we use the commutator estimate as before. Compared to $I^1$, it is not necessary to split $I^3_3$ into the nonzero and zero frequency. In fact, 
	we now exploit $\frac{1}{\langle t\rangle}$ to control the growth of $\|\Omega\|_{L^2}$ which contains the problematic $\|\Omega_0\|_{L^2}$.
	
	\noindent\textbf{Treatment of} $I^3_1$
	
	\noindent  Similar as $I^1_{13}$, we decompose $I^3_1$ into two parts:
	\begin{align*}
		I^3_{11}:=&\sum_{(k,l)\in R_1}\iint\left|M(k,\eta)-M(l,\xi)\right|\left|\hat{U}^1_{\neq}(k-l,\eta-\xi)\right|\left|l\hat{\Omega}_{\neq}(l,\xi)\right|\left|M(k,\eta)\hat{\Omega}(k,\eta)\right|\mathrm{d}\xi\mathrm{d}\eta,\\
		I^3_{12}:=&\sum_{(k,l)\in R_2}\iint\left|M(k,\eta)-M(l,\xi)\right|\left|\hat{U}^1_{\neq}(k-l,\eta-\xi)\right|\left|l\hat{\Omega}_{\neq}(l,\xi)\right|\left|M(k,\eta)\hat{\Omega}(k,\eta)\right|\mathrm{d}\xi\mathrm{d}\eta,
	\end{align*}
	where $R_1,R_2$ are defined by \eqref{part}.
	To treat $I^3_{11}$, by \eqref{bdu1} and \eqref{estl}, we get
	\begin{align*}
		I^3_{11}=&\sum_{(k,l)\in R_1}\iint\left|M(k,\eta)-M(l,\xi)\right|\left|\hat{U}^1_{\neq}(k-l,\eta-\xi)\right|\left|l\hat{\Omega}_{\neq}(l,\xi)\right|\left|M(k,\eta)\hat{\Omega}(k,\eta)\right|\mathrm{d}\xi\mathrm{d}\eta\\
		\lesssim&\sum_{(k,l)\in R_1}\iint\left|M(k-l,\eta-\xi)\hat{U}^1_{\neq}(k-l,\eta-\xi)\right|\left|l\hat{\Omega}_{\neq}(l,\xi)\right|\left|M(k,\eta)\hat{\Omega}(k,\eta)\right|\\
		&+\left|\hat{U}^1_{\neq}(k-l,\eta-\xi)\right|\left(|k-l|+\nu^{-\frac{1}{6}}|l|^{\frac{1}{3}}|k|^{\frac{1}{3}}\right)\left|M(l,\xi)\hat{\Omega}_{\neq}(l,\xi)\right|\left|M(k,\eta)\hat{\Omega}(k,\eta)\right|\mathrm{d}\xi\mathrm{d}\eta\\
		\lesssim&\ \|M\Gamma\Omega\|_{L^2}e^{-\delta_0\nu^{\frac{1}{3}}t}\|M\Omega_{\neq}\|_{L^2}\|M\Omega\|_{L^2}+\nu^{-\frac{1}{6}}\|MZ\|_{L^2}\||\partial_{X}^{\frac{1}{3}}|M\Omega\|^2_{L^2}\\
		\lesssim&\ \|M\Gamma\Omega\|_{L^2}e^{-\delta_0\nu^{\frac{1}{3}}t}\nu^{-\frac{1}{6}}\nu^{\frac{1}{6}}\||\partial_{X}^{\frac{1}{3}}|M\Omega\|_{L^2}\|M\Omega\|_{L^2}+\nu^{-\frac{1}{6}}\|MZ\|_{L^2}\||\partial_{X}^{\frac{1}{3}}|M\Omega\|^2_{L^2}.
	\end{align*}
	By the bootstrap hypotheses, it holds that $\|M\Omega\|_{L^2}\lesssim\langle t\rangle\epsilon$.  
	Then we deduce
	\begin{equation*}
		\int_{t_0}^{t} I^3_{11}\mathrm{d}\tau\lesssim\nu^{-\frac{1}{2}}\epsilon^3\langle t\rangle^2.
	\end{equation*}
	For $I^3_{12}$, we can proceed analogously to the treatment of $I^1_{132}$ with $\hat{\Omega}$ playing the role of $\hat{Z}$. We omit further details and conclude the result
	\begin{align*}
		I^3_{12}\lesssim&e^{-\delta_0\nu^{\frac{1}{3}}t}\langle t\rangle\nu^{\frac{1}{2}}\|M\Omega_{\neq}\|^3_{L^2}+e^{-\delta_0\nu^{\frac{1}{3}}t}\langle t\rangle\nu^{\frac{1}{2}}\|MZ\|_{L^2}\|M\Omega_{\neq}\|_{L^2}\|M\nabla_L\Omega\|_{L^2}\\
		&+\|MZ\|_{L^2}\|M\Omega_{\neq}\|^2_{L^2}+\nu^{\frac{1}{3}}\|MZ\|_{L^2}\|\partial_X|^{\frac{1}{3}}M\Omega\|^2_{L^2}.
	\end{align*}
	Using bootstrap hypotheses, we have
	\begin{equation*}
		\int_{t_0}^{t} I^3_{12}\mathrm{d}\tau\lesssim\nu^{-\frac{1}{2}}\epsilon^3\langle t\rangle^2.
	\end{equation*}
	Furthermore, we conclude that
	\begin{equation}\label{estT31}
		\int_{t_0}^{t} I^3_{1}\mathrm{d}\tau\lesssim\nu^{-\frac{1}{2}}\epsilon^3\langle t\rangle^2.
	\end{equation}
	\textbf{Treatment of} $I^3_2$
	
	\noindent The estimate of $I^3_2$ is similar to $I^1_{221}$, hence we omit the details. Thanks to \eqref{commeq}, we have
	\begin{align*}
		I^3_2=&\sum_{k\neq0}\iint\left|M(k,\eta)-M(k,\xi)\right|\left|\hat{U}^1_0(\eta-\xi)\right|\left|k\hat{\Omega}_{\neq}(k,\xi)\right|\left|M\hat{\Omega}(k,\eta)\right|\mathrm{d}\xi\mathrm{d}\eta\\
		\lesssim&\ \|U^1_0\|_{H^N}\|M\Omega_{\neq}\|^2_{L^2}+\nu^{\frac{1}{3}}\|U^1_0\|_{H^N}\||\partial_X|^{\frac{1}{3}}M\Omega\|^2_{L^2}.
	\end{align*}
	By the bootstrap hypotheses, it holds that
	\begin{equation}\label{estT32}
		\int_{t_0}^{t} I^3_{2}\mathrm{d}\tau\lesssim\nu^{-\frac{1}{3}}\epsilon^3\langle t\rangle^2+\epsilon^3\langle t\rangle^2\leq\nu^{-\frac{1}{2}}\epsilon^3\langle t\rangle^2.
	\end{equation}
	\textbf{Treatment of} $I^3_3$
	
	\noindent Using \eqref{bdm}, we split $I^3_3$ into three parts:
	\begin{align*}
		I^3_3=&\left|\sum_{k,l}\iint\left[M(k,\eta)-M(l,\xi)\right]\hat{U}^2_{\neq}(k-l,\eta-\xi) i(\xi-lt)\hat{\Omega}(l,\xi) M(k,\eta)\bar{\hat{\Omega}}(k,\eta)\mathrm{d}\xi\mathrm{d}\eta\right|\\
		\lesssim& \sum_{k,l}\iint\left|M\hat{U}^2_{\neq}(k-l,\eta-\xi)\right|\left|(\xi-lt)\hat{\Omega}_{\neq}(l,\xi)\right|\left|M(k,\eta)\hat{\Omega}(k,\eta)\right|\mathrm{d}\xi\mathrm{d}\eta\\
		&+\sum_{k}\iint\left|M\hat{U}^2_{\neq}(k,\eta-\xi)\right|\left|\xi\hat{\Omega}_0(\xi)\right|\left|M(k,\eta)\hat{\Omega}(k,\eta)\right|\mathrm{d}\xi\mathrm{d}\eta\\
		&+\sum_{k,l}\iint\left|e^{\delta_0\nu^{\frac{1}{3}}t}\hat{U}^2_{\neq}(k-l,\eta-\xi)\right| \left|(\xi-lt)M(l,\xi)\hat{\Omega}(l,\xi)\right| \left|M(k,\eta)\hat{\Omega}(k,\eta)\right|\mathrm{d}\xi\mathrm{d}\eta=:\sum_{i=1}^{3}I^3_{3i}.
	\end{align*}
	We now treat $I^3_{31}$. Using Cauchy-Schwartz, Young's inequality, and \eqref{bdu2}, we have 
	\begin{align*}
		I^3_{31}=&\sum_{k,l}\iint\left|M\hat{U}^2_{\neq}(k-l,\eta-\xi)\right|\left|(\xi-lt)\hat{\Omega}_{\neq}(l,\xi)\right|\left|M(k,\eta)\hat{\Omega}(k,\eta)\right|\mathrm{d}\xi\mathrm{d}\eta\\
		\lesssim&\|M\Gamma Z\|_{L^2}e^{-\delta_0\nu^{\frac{1}{3}}t}\langle t\rangle^3\frac{\|M\Omega\|_{L^2}^2}{\langle t\rangle^2}\\
		\lesssim&\nu^{-\frac{1}{2}}\|M\Gamma Z\|_{L^2}\langle t\rangle^{\frac{3}{2}}\frac{\|M\Omega\|_{L^2}^2}{\langle t\rangle^2}.
	\end{align*}
	Similarly, by the fact $\Omega_0=\partial_YU^1_0$, we get
	\begin{align*}
		I^3_{32}=&\sum_{k}\iint\left|M\hat{U}^2_{\neq}(k,\eta-\xi)\right|\left|\xi\hat{\Omega}_0(\xi)\right|\left|M(k,\eta)\hat{\Omega}(k,\eta)\right|\mathrm{d}\xi\mathrm{d}\eta\\
		\lesssim&\|M\Gamma Z\|_{L^2}\|U^1_0\|_{H^N}\langle t\rangle \frac{\|M\Omega\|_{L^2}}{\langle t\rangle}.
	\end{align*}
	Then by the bootstrap hypotheses and H$\ddot{\mathrm{o}}$lder's inequality, it holds that
	\begin{equation*}
		\int_{t_0}^{t}I^3_{31}\lesssim\nu^{-\frac{1}{2}}\epsilon^2\left(\int_{t_0}^{t}\|M\Gamma Z\|^2_{L^2}\mathrm{d}\tau\right)^\frac{1}{2}\left(\int_{t_0}^{t}\langle \tau\rangle^3\mathrm{d}\tau\right)^\frac{1}{2}\lesssim\nu^{-\frac{1}{2}}\epsilon^3\langle t\rangle^2,
	\end{equation*}
	and 
	\begin{equation*}
		\int_{t_0}^{t}I^3_{32}\lesssim\epsilon^2\left(\int_{t_0}^{t}\|M\Gamma Z\|^2_{L^2}\mathrm{d}\tau\right)^\frac{1}{2}\left(\int_{t_0}^{t}\langle \tau\rangle^2\mathrm{d}\tau\right)^\frac{1}{2}\lesssim\nu^{-\frac{1}{2}}\epsilon^3\langle t\rangle^\frac{3}{2}.
	\end{equation*}
	For $I^3_{33}$, using \eqref{bdu2t}, we deduce
	\begin{align*}
		I^3_{33}=&\sum_{k,l}\iint\left|e^{\delta_0\nu^{\frac{1}{3}}t}\hat{U}^2_{\neq}(k-l,\eta-\xi)\right| \left|(\xi-lt)M(l,\xi)\hat{\Omega}(l,\xi)\right| \left|M(k,\eta)\hat{\Omega}(k,\eta)\right|\mathrm{d}\xi\mathrm{d}\eta\\
		\lesssim&\sum_{k,l}\iint\left|e^{\delta_0\nu^{\frac{1}{3}}t}\frac{1}{\langle t\rangle}\langle|k-l,\eta-\xi|\rangle(\widehat{\Gamma\Omega_{\neq}})(k-l,\eta-\xi)\right| \left|(\xi-lt)(\widehat{M\Omega})(l,\xi)\right| \left|(\widehat{M\Omega})(k,\eta)\right|\mathrm{d}\xi\mathrm{d}\eta\\
		\lesssim&\frac{1}{\langle t\rangle}\|\Gamma M\Omega\|_{L^2}\|\nabla_LM\Omega\|_{L^2}\|M\Omega\|_{L^2}.
	\end{align*}
	Thus the bootstrap hypotheses imply that
	\begin{equation}\label{estT33}
		\int_{t_0}^{t} I^3_{3}\mathrm{d}\tau\lesssim\nu^{-\frac{1}{2}}\epsilon^3\langle t\rangle^2.
	\end{equation}
	From the bounds \eqref{estT31}-\eqref{estT33}, we conclude that
	\begin{equation}\label{estTho}
		\int_{t_0}^{t}\widetilde{T}\mathrm{d}\tau\lesssim\nu^{-\frac{1}{2}}\epsilon^3\langle t\rangle^2.
	\end{equation}
	\subsubsection{Estimates on $\widetilde{S}$}
	 Here again, we only consider the first terms of \eqref{omegstr} and define 
	 \begin{equation*}
	 	I^4=\left|\left\langle M\left(\left(\frac{2\partial_{XY}^L}{\Delta_L} J\right) \left(\Omega-\frac{2\partial_{XX}}{\Delta_L} \Omega\right)\right), M J\right\rangle\right|.
	 \end{equation*}
	We have the following decomposition
	\begin{align*}
		I^4=&\left|\sum_{k,l}\iint M(k,\eta)\left(\frac{\partial_tp}{p}\hat{J}\right)(k-l,\eta-\xi)\left(1-\frac{2l^2}{p(l,\xi)}\right)\hat{\Omega}(l,\xi) M(k,\eta)\bar{\hat{J}}(k,\eta)\mathrm{d}\xi\mathrm{d}\eta\right|\\
		\lesssim&\sum_{k,l}\iint M(k,\eta)\left|\left(\frac{\partial_tp}{p}\hat{J}\right)(k-l,\eta-\xi)\right||\hat{\Omega}_{\neq}(l,\xi)||M(k,\eta)\hat{J}(k,\eta)|\mathrm{d}\xi\mathrm{d}\eta\\
		&+\sum_{k}\iint M(k,\eta)\left|\left(\frac{\partial_tp}{p}\hat{J}\right)(k,\eta-\xi)\right||\hat{\Omega}_0(\xi)|| M(k,\eta)\hat{J}(k,\eta)|\mathrm{d}\xi\mathrm{d}\eta=:I^4_1+I^4_2.
	\end{align*}
	By \eqref{bdm} and the fact
	\begin{equation*}
		\left|\frac{\partial_tp}{p}(k-l,\eta-\xi)\right|\lesssim\Gamma(k-l,\eta-\xi),
	\end{equation*}
	we have
	\begin{align*}
		I^4_1\lesssim e^{-\delta_0\nu^{\frac{1}{3}}t}\|\Gamma MJ\|_{L^2}\|M\Omega_{\neq}\|_{L^2}\|MJ\|_{L^2}.
	\end{align*}
	For $I^4_2$, noting the fact that
	\begin{equation*}
		\left|\frac{\partial_tp}{p}(k,\eta-\xi)\right|\lesssim\frac{|k|}{\sqrt{p(k,\eta-\xi)}}\leq\frac{1}{\langle t\rangle}\langle|k,\eta-\xi|\rangle,
	\end{equation*}
	we get
	\begin{align*}
		I^4_2=&\sum_{k\neq0}\iint M(k,\eta)\left|\left(\frac{\partial_tp}{p}\hat{J}\right)(k,\eta-\xi)\right||\hat{\Omega}_0(\xi)|| M(k,\eta)\hat{J}(k,\eta)|\mathrm{d}\xi\mathrm{d}\eta\\
		\lesssim&\sum_{k\neq0}\iint \left|\frac{|k|}{\sqrt{p(k,\eta-\xi)}}M(k,\eta-\xi)\hat{J}(k,\eta-\xi)\right||\hat{\Omega}_0(\xi)|| M(k,\eta)\hat{J}(k,\eta)|\mathrm{d}\xi\mathrm{d}\eta\\
		&+\sum_{k\neq0}\iint  e^{\delta_0\nu^{\frac{1}{3}}t}\left|\frac{|k|}{\sqrt{p(k,\eta-\xi)}}\hat{J}(k,\eta-\xi)\right||\langle|\xi|\rangle^N\hat{\Omega}_0(\xi)|| M(k,\eta)\hat{J}(k,\eta)|\mathrm{d}\xi\mathrm{d}\eta\\
		\lesssim&\sum_{k\neq0}\iint \left|\Gamma(k,\eta-\xi) M(k,\eta-\xi)\hat{J}(k,\eta-\xi)\right||\xi\hat{U}^1_0(\xi)|| M(k,\eta)\hat{J}_{\neq}(k,\eta)|\mathrm{d}\xi\mathrm{d}\eta\\
		&+\sum_{k\neq0}\iint  e^{\delta_0\nu^{\frac{1}{3}}t}\frac{1}{\langle t\rangle}\left|\langle|k,\eta-\xi|\rangle\hat{J}_{\neq}(k,\eta-\xi)\right||\langle|\xi|\rangle^N\hat{\Omega}_0(\xi)|| M(k,\eta)\hat{J}_{\neq}(k,\eta)|\mathrm{d}\xi\mathrm{d}\eta\\
		\lesssim&\ \|\Gamma MJ\|_{L^2}\|U^1_0\|_{H^N}\|MJ_{\neq}\|_{L^2}+\|MJ_{\neq}\|^2_{L^2}\frac{1}{\langle t\rangle}\|\Omega_0\|_{H^N}.
	\end{align*}
	Thus by the bootstrap hypotheses, we have
	\begin{equation}\label{estSho}
		\int_{t_0}^{t} \widetilde{S}\mathrm{d}\tau\lesssim\nu^{-\frac{1}{2}}\epsilon^3\langle t\rangle^2+\nu^{-\frac{1}{6}}\epsilon^3\langle t\rangle^2+\nu^{-\frac{1}{3}}\epsilon^3\langle t\rangle^2\lesssim\nu^{-\frac{1}{2}}\epsilon^3\langle t\rangle^2,
	\end{equation}
	and complete the estimates of nonlinear terms for $0<\mu^3\leq\nu\leq\mu\leq1$.
	\subsubsection{Conclusion of bootstrap argument}
	Now we improve the bootstrap hypotheses \eqref{bs}.
	When $0<\mu\leq\nu\leq\mu^{\frac{1}{3}}\leq1$, it holds $\lambda=\mu$, and we simply exchange the role of $\nu$ and $\mu$ in the above estimates. In fact, to get \eqref{bssym}, by \eqref{estTsym}-\eqref{estMsym} and Lemma \ref{st}, integrating the energy inequality \eqref{sym} over $[t_0,t]$, we obtain
	\begin{equation*}
		\overline{E}(t)+\frac{1}{100|\beta|}\int_{t_0}^{t}\overline{D}(\tau)+\overline{CK}(\tau)\mathrm{d}\tau\leq2\epsilon^2+ C(\epsilon\lambda^{-\frac{1}{2}})\epsilon^2\leq(2+C\epsilon_0)\epsilon^2\leq5\epsilon^2,
	\end{equation*}
	with $\epsilon_0\leq\frac{1}{C}$. For \eqref{bsho}, applying \eqref{estTho}, \eqref{estSho}, Lemma \ref{st}, and bootstrap hypotheses, integrating \eqref{ho} in time and picking $\epsilon_0$ sufficiently small, we have
		\begin{equation*}
		\widetilde{E}(t)+\frac{1}{100|\beta|}\int_{t_0}^{t}\widetilde{D}(\tau)+\widetilde{CK}(\tau)\mathrm{d}\tau\leq4\int_{t_0}^{t}\sqrt{\overline{E}}\sqrt{\widetilde{E}}\mathrm{d}\tau+2\epsilon^2\langle t\rangle^2\leq402\epsilon^2\langle t\rangle^2\leq500\epsilon^2\langle t\rangle^2.
		\end{equation*}
	Thus we improve the bootstrap hypothesis \eqref{bsho}. Finally, by the estimate of $\eqref{zeror}$, we integrate \eqref{zero} in time and choose $\epsilon_0\ll1$,
	\begin{equation*}
		E_{0}(t)+\int_{t_0}^{t}D_{0}(\tau)\mathrm{d}\tau\leq2\epsilon^2+ C(\epsilon\lambda^{-\frac{1}{2}})\epsilon^2\leq5\epsilon^2,
	\end{equation*}
	improving \eqref{bszero}. Whence we complete the proof of Proposition \ref{bsup1}. $\hfill\qedsymbol$
	
	\subsection{Proof of bootstrap in the regime $\mu^{\frac{1}{3}}\leq\nu$}\label{s44}
	In the regime $0<\mu^{\frac{1}{3}}\leq\nu\leq1$, it will be useful to define some shorthands for the various terms in \eqref{tran}-\eqref{zeror}. For concreteness, in \eqref{tran}, we denote
	\begin{equation*}
		\overline{T}[FGH]:=Re\langle\Gamma M(F\cdot\nabla_LG),MH\rangle,
	\end{equation*}
	where $FGH\in\{U\Omega Z,UJQ,BJZ,B\Omega Q\}$. We can define the shorthands for \eqref{trann}-\eqref{omegstr} and \eqref{zeror} similarly. In fact, due to the fact that $M_5$ is not bounded from below by any constant, we need to handle each term differently. For the sake of simplicity, we only show the estimate of several typical terms and one can treat the other terms in analogous ways.  
	\subsubsection{Estimates on $\overline{T}$} \label{bsset2}
	Using the fact that
	\begin{align*}
		&\nabla^{\perp}_L\cdot(B\cdot \nabla_LB)=B\cdot\nabla_LJ,\\
		&\nabla^{\perp}_L\cdot(B\cdot \nabla_LU)=B\cdot\nabla_L\Omega-\left(\frac{2\partial^L_{XY}}{\Delta_L}\Omega\right)\left(J-\frac{\partial_{XX}}{\Delta_L}J\right),\\
		&\nabla^{\perp}_L\cdot \left(MZ\right)=\nabla^{\perp}_L\cdot \left(MQ\right)=0.
	\end{align*}
	and integration by part, we obtain
	\begin{align*}
		\overline{T}[BJZ]+\overline{T}[B\Omega Q]
		=&Re\left(\left\langle\Gamma M(B\cdot\nabla_{L}J),MZ\right\rangle+\left\langle\Gamma M(B\cdot\nabla_{L}\Omega),MQ\right\rangle\right)\\
		=&Re\left(\left\langle\Gamma M\nabla^{\perp}_L\cdot(B\cdot \nabla_LB),\Gamma M\nabla_L^{\perp}\cdot U\right\rangle+\left\langle\Gamma M\nabla^{\perp}_L\cdot(B\cdot \nabla_LU),\Gamma M\nabla_L^{\perp}\cdot B\right\rangle\right)\\
		&+Re\left(\left\langle\Gamma M\left(\left(\frac{2\partial^L_{XY}}{\Delta_L} \Omega\right) \left(J- \frac{2\partial_{XX}}{\Delta_L} J\right)\right), MQ\right\rangle\right)\\
		=&\left(Re\langle\partial_XM(B\cdot\nabla_LB),\partial_XMU\rangle+Re\langle\partial_XM(B\cdot\nabla_LU),\partial_XMB\rangle\right)\\
		&+Re\left(\left\langle\Gamma M\left(\left(\frac{2\partial^L_{XY}}{\Delta_L} \Omega\right) \left(J- \frac{2\partial_{XX}}{\Delta_L} J\right)\right), MQ\right\rangle\right).
	\end{align*}
	The last term also appears in the expression for $\overline{S}$, which we will treat in subsection \ref{s443}. For the other two terms, noting the cancellation 
	\begin{equation*}
		\langle B\cdot\nabla_L(\partial_XMB\partial_XMU)\rangle=0,
	\end{equation*}
	we get the commutators
	\begin{align*}
		&\left(Re\langle\partial_XM(B\cdot\nabla_LB),\partial_XMU\rangle+Re\langle\partial_XM(B\cdot\nabla_LU),\partial_XMB\rangle\right)\\
		&=\left(Re\langle[\partial_XM,B\cdot\nabla_L]B,\partial_XMU\rangle+Re\langle[\partial_XM,B\cdot\nabla_L]U,\partial_XMB\rangle\right)=:\overline{T}[BBU]+\overline{T}[BUB].
	\end{align*}
	\textbf{Treatment of} $\overline{T}[BBU]$\\
	As before, we decompose $\overline{T}[BBU]$ into four parts
	\begin{align*}
		\overline{T}[BBU]=&Re\langle[\partial_XM,B\cdot\nabla_L]B,\partial_XMU\rangle\\
		=&\langle[\partial_XM,B^1_{\neq}\partial_X]B_{\neq},\partial_XMU\rangle+\langle[\partial_XM,B^1_0\partial_X]B_{\neq},\partial_XMU\rangle\\
		&+Re\langle[\partial_XM,B^2_{\neq}\partial^L_Y]B_{\neq},\partial_XMU\rangle+Re\langle[\partial_XM,B^2_{\neq}\partial^L_Y]B_0,\partial_XMU\rangle=:T_1+T_2+T_3+T_4.
	\end{align*}
	For $T_1$, by Lemma \ref{indap}, we deduce
	\begin{align*}
		T_1=&\left|\sum_{k,l}\iint[kM(k,\eta)-lM(l,\xi)]\hat{B}^1_{\neq}(k-l,\eta-\xi)l\hat{B}_{\neq}(l,\xi)kM(k,\eta)\bar{\hat{U}}(k,\eta)\mathrm{d}\xi\mathrm{d}\eta\right|\\
		\lesssim&\sum_{k,l}\iint A(k,\eta)|\langle|k-l,\eta-\xi|\rangle^N\hat{Q}(k-l,\eta-\xi)||\hat{Q}(l,\xi)||M(k,\eta)\hat{Z}(k,\eta)|\mathrm{d}\xi\mathrm{d}\eta\\
		&+\sum_{k,l}\iint A(k,\eta)|\hat{Q}(k-l,\eta-\xi)||\langle|l,\xi|\rangle^N\hat{Q}(l,\xi)||M(k,\eta)\hat{Z}(k,\eta)|\mathrm{d}\xi\mathrm{d}\eta\\
		&+\sum_{k,l}\iint|M(k,\eta)-M(l,\xi)||\hat{Q}(k-l,\eta-\xi)||l\hat{Q}(l,\xi)||M(k,\eta)\hat{Z}(k,\eta)|\mathrm{d}\xi\mathrm{d}\eta\\
		=&:T_{11}+T_{12}+T_{13}.
	\end{align*}
	By \eqref{216}, we get
	\begin{align}
		T_{11}\lesssim&\sum_{k,l}\iint \left|\frac{M(k-l,\eta-\xi)}{M_5(k-l,\eta-\xi)}\hat{Q}(k-l,\eta-\xi)\right|\left|\frac{1}{M_5(l,\xi)}M_5(l,\xi)\hat{Q}(l,\xi)\right||M(k,\eta)\hat{Z}(k,\eta)|\mathrm{d}\xi\mathrm{d}\eta\notag\\
		\lesssim&\sum_{k,l}\iint \left(1+\min\left\{\frac{\nu}{\Gamma(k-l,\eta-\xi)},\alpha\right\}\right)|(M\hat{Q})(k-l,\eta-\xi)|\left(1+\min\left\{\frac{\nu}{\Gamma(l,\xi)},\alpha\right\}\right)\notag\\
		&\times|M_5(l,\xi)\hat{Q}(l,\xi)||(M\hat{Z})(k,\eta)|\mathrm{d}\xi\mathrm{d}\eta=:T_{11}(1,1)+T_{11}(1,\alpha)+T_{11}(\alpha,1)+T_{11}(\alpha,\alpha),\label{alphal} 
	\end{align}
	where $T_{11}(\cdot,\cdot)$ are obtained by expanding 
	\begin{equation*}
		\left(1+\min\left\{\frac{\nu}{\Gamma(k-l,\eta-\xi)},\alpha\right\}\right)\left(1+\min\left\{\frac{\nu}{\Gamma(l,\xi)},\alpha\right\}\right).
	\end{equation*}
	Obviously we only need to consider $T_{11}(1,1)$ and $T_{11}(\alpha,\alpha)$. In fact, we have
	\begin{equation*}
		T_{11}(1,\alpha)+T_{11}(\alpha,1)\lesssim\alpha\,T_{11}(1,1).
	\end{equation*}
	Then by	$\Gamma(l,\xi)^{-1}\leq t|l,\xi|$,
	we get
	\begin{equation*}
		T_{11}(1,1)\leq\|MQ\|^2_{L^2}\|MZ\|_{L^2},
	\end{equation*}
	and
	\begin{align*}
		T_{11}(\alpha,\alpha)\lesssim& \ \alpha\|MQ\|_{L^2}e^{-\delta_0\mu^\frac{1}{3}t}t\|MQ\|_{L^2}\nu\|MZ\|_{L^2}\\
		\lesssim&\ \alpha\mu^{-\frac{1}{3}}\|MQ\|^2_{L^2}\nu\|MZ\|_{L^2}.
	\end{align*}
	Therefore, the bootstrap hypotheses imply that
	\begin{equation*}
		\int_{t_0}^{t}T_{11}\mathrm{d}\tau\lesssim\mu^{-\frac{1}{3}}\epsilon^3+\alpha\mu^{-\frac{1}{3}}\epsilon^3+\alpha\mu^{-\frac{1}{3}}\epsilon^3+\alpha\mu^{-\frac{1}{2}}\epsilon^3\lesssim\alpha\mu^{-\frac{1}{2}}\epsilon^3.
	\end{equation*}
	For $T_{12}$, by a same argument, we get 
	\begin{equation*}
		\int_{t_0}^{t}T_{12}\mathrm{d}\tau\lesssim\mu^{-\frac{1}{3}}\epsilon^3+\alpha\mu^{-\frac{1}{3}}\epsilon^3+\alpha\mu^{-\frac{1}{3}}\epsilon^3+\alpha\mu^{-\frac{1}{2}}\epsilon^3\lesssim\alpha\mu^{-\frac{1}{2}}\epsilon^3.
	\end{equation*}
	To consider $T_{13}$, using similar notation as \eqref{alphal}, we have
	\begin{equation*}
		T_{13}(\alpha,\alpha)\lesssim\alpha\|MQ\|_{L^2}\|\nabla_LMQ\|_{L^2}\nu\|MZ\|_{L^2}.
	\end{equation*}
	 $T_{13}(1,1),T_{13}(1,\alpha),T_{13}(\alpha,1)$ are treated analogously as $I^1_{13}$.
	By the bootstrap hypotheses, we deduce
	\begin{equation*}
		\int_{t_0}^{t}T_1\mathrm{d}\tau\lesssim\alpha\mu^{-\frac{1}{2}}\epsilon^3.
	\end{equation*}
	For $T_2$, which is treated similarly to $I^1_{221}$, we omit further details and arrive at
	\begin{equation*}
		T_2\lesssim\alpha\|B_0^1\|_{H^N}\|MQ\|_{L^2}\|MZ\|_{L^2}+\alpha\mu^{\frac{1}{3}}\|B^1_0\|_{H^N}\||\partial_X|^{\frac{1}{3}}|MQ\|_{L^2}\||\partial_X|^{\frac{1}{3}}|MZ\|_{L^2}
	\end{equation*}
	Turning to $T_3$, by Lemma \ref{indap}, we have 
	\begin{align*}
		T_3=&\left|\sum_{k,l}\iint[kM(k,\eta)-lM(l,\xi)]\hat{B}^2_{\neq}(k-l,\eta-\xi)(\xi-lt)\hat{B}_{\neq}(l,\xi)kM(k,\eta)\bar{\hat{U}}(k,\eta)\mathrm{d}\xi\mathrm{d}\eta\right|\\
		\lesssim&\sum_{k,l}\iint A(k,\eta)|\langle|k-l,\eta-\xi|\rangle^N(\Gamma\hat{Q})(k-l,\eta-\xi)||(\xi-lt)\hat{Q}(l,\xi)||M(k,\eta)\hat{Z}(k,\eta)|\mathrm{d}\xi\mathrm{d}\eta\\
		&+\sum_{k,l}\iint A(k,\eta)|(\Gamma\hat{Q})(k-l,\eta-\xi)||\langle|l,\xi|\rangle^N(\xi-lt)\hat{Q}(l,\xi)||M(k,\eta)\hat{Z}(k,\eta)|\mathrm{d}\xi\mathrm{d}\eta\\
		\lesssim&\ \nu\|MQ\|_{L^2}\alpha\|\nabla_LMQ\|_{L^2}\|MZ\|_{L^2}.
	\end{align*}
	Similarly, for $T_4$, we use Lemma \ref{indap} to get
		\begin{align*}
		T_4=&\left|\sum_{k}\iint kM(k,\eta)\hat{B}^2_{\neq}(k,\eta-\xi)\xi\hat{B}^1(0,\xi)kM(k,\eta)\bar{\hat{U}}^1(k,\eta)\mathrm{d}\xi\mathrm{d}\eta\right|\\
		\lesssim&\ \alpha\|\Gamma MQ\|_{L^2}\|B^1_0\|_{H^N}\|MZ\|_{L^2}+\alpha\|\Gamma MQ\|_{L^2}\|\partial_YB^1_0\|_{H^N}\|MZ\|_{L^2}.
	\end{align*}
	Therefore, by the bootstrap hypotheses, we conclude
	\begin{equation*}
		\int_{t_0}^{t}T[BBU]\mathrm{d}\tau\lesssim\alpha\mu^{-\frac{1}{2}}\epsilon^3.
	\end{equation*}
	\textbf{Treatment of} $\overline{T}[UJQ]$\\
	Using similar notation as \eqref{alphal}, the estimate of $\overline{T}[UJQ](1,1)$ is exact the same as that of $I^1$. Furthermore, the remaining (1,1)-type estimates in the section \ref{s44} can be treated as the corresponding terms before line by line. We only need to consider $(\alpha,\alpha)$-type estimates. As before, we decompose $\overline{T}[UJQ]$ into four parts:
	\begin{align*}
		\overline{T}[UJQ]=&\left\langle\left[M\Gamma,U\cdot\nabla_L\right]J,MQ\right\rangle\\
			=&\left\langle\left[M\Gamma,U^1_{\neq}\partial_{X}\right]J_{\neq},MQ\right\rangle
			+\left\langle\left[M\Gamma,U^1_0\partial_{X}\right]J_{\neq},MQ\right\rangle\\
			&+\left\langle\left[M\Gamma,U^2_{\neq}\partial_Y^L\right]J_{\neq},MQ\right\rangle
			+\left\langle\left[M\Gamma,U^2_{\neq}\partial_Y^L\right]J_{0},MQ\right\rangle\\
			=&:\overline{T}[U^1_{\neq}JQ]+\overline{T}[U^1_0JQ]+\overline{T}[U^2J_{\neq}Q]+\overline{T}[U^2J_{0}Q].
	\end{align*}
	We will frequently use similar notations without further explanations. By triangle inequality, we have
	\begin{align*}
			\overline{T}[U^1_{\neq}JQ]=&\left|\sum_{k,l\neq0}\iint\left[M(k,\eta)\Gamma(k,\eta)-M(l,\xi)\Gamma(l,\xi)\right]\hat{U}^1_{\neq}(k-l,\eta-\xi) il\hat{J}(l,\xi) M(k,\eta)\bar{\hat{Q}}(k,\eta)\mathrm{d}\xi\mathrm{d}\eta\right|\\
			\leq&\left|\sum_{k,l\neq0}\iint M(k,\eta)\left[\Gamma(k,\eta)-\Gamma(l,\xi)\right]\hat{U}^1_{\neq}(k-l,\eta-\xi) il\hat{J}(l,\xi) M(k,\eta)\bar{\hat{Q}}(k,\eta)\mathrm{d}\xi\mathrm{d}\eta\right|\\
			&+\left|\sum_{k,l\neq0}\iint\left[M(k,\eta)-M(l,\xi)\right]\Gamma(l,\xi)\hat{U}^1_{\neq}(k-l,\eta-\xi) il\hat{J}(l,\xi) M(k,\eta)\bar{\hat{Q}}(k,\eta)\mathrm{d}\xi\mathrm{d}\eta\right|\\
			\lesssim&\ e^{-\delta_0\mu^{\frac{1}{3}}t}\alpha\|\nabla_LMZ\|_{L^2}\nu\|MJ\|_{L^2}\|\Gamma MQ\|_{L^2}+\nu\|\nabla_LMZ\|_{L^2}\alpha\|MQ\|_{L^2}^2\\
			&+\alpha\|MZ\|_{L^2}\nu\|\nabla_LMQ\|_{L^2}\|MQ\|_{L^2}.
	\end{align*}
	The last three terms can be treated similarly as $I^1_2,I^1_3,I^1_4$, respectively. We omit further details to deduce
	\begin{align*}
		\overline{T}[U^1_0JQ]&\lesssim\alpha\left(\|U^1_0\|_{H^N}\||\partial_X|^{\frac{1}{3}}MQ\|_{L^2}^2+\|\partial_YU^1_0\|_{H^N}\|MQ\|_{L^2}\|\Gamma MQ\|_{L^2}\right),\\
		\overline{T}[U^2J_{\neq}Q]&\lesssim\nu\|\nabla_LMZ\|_{L^2}\alpha e^{-\delta_0\mu^{\frac{1}{3}}t}t\|MQ\|_{L^2}^2+\nu\|\nabla_LMZ\|_{L^2}\alpha\|\nabla_LMQ\|_{L^2}\|MQ\|_{L^2},
	\end{align*}
	and
	\begin{equation*}
		\overline{T}[U^2J_0Q]\lesssim\alpha\left(\|\Gamma MZ\|_{L^2}\|B^1_0\|_{H^N}\|\Gamma MQ\|_{L^2}+\|MZ\|_{L^2}\frac{1}{\langle t\rangle}\|\partial_YJ_0\|_{L^2}\|\Gamma MQ\|_{L^2}\right).
	\end{equation*}
	By the bootstrap hypotheses, we conclude
	\begin{equation*}
		\int_{t_0}^{t}\overline{T}[UJQ]\mathrm{d}\tau\lesssim\alpha\mu^{-\frac{1}{2}}\epsilon^3.
	\end{equation*}
	\subsubsection{Estimates on $\overline{N}$}
	\textbf{Treatment of} $\overline{N}[BJQ]$\\
	Compared to the estimate in the regime $\mu^3\leq\nu\leq\mu^{\frac{1}{3}}$, we need to control an additional $\Gamma^{-1}$. In fact, using integration by part, Biot-Savart law, and triangle inequality, we deduce
	\begin{align*}
		\overline{N}[B_{\neq}J_{\neq}Q]=&\left\langle \Gamma M(B_{\neq}\cdot\nabla_LJ_{\neq}),\frac{2\partial^L_Y}{\beta\Delta_L}MQ\right\rangle=\left\langle \partial_X M(B_{\neq}\cdot\nabla_LB_{\neq}),\frac{2\partial^L_Y}{\beta\Delta_L}\partial_XMB\right\rangle\\
		=&\Big|\sum_{k,l}\iint ik M(k,\eta)\left(\frac{(\eta-\xi-(k-l)t,-(k-l))}{p(k-l,\eta-\xi)}\hat{J}(k-l,\eta-\xi)\cdot(l,\xi-lt)\hat{B}(l,\xi)\right)\\
		&\times\frac{2(\eta-kt)}{-i\beta p(k,\eta)}ikM(k,\eta)\bar{\hat{B}}(k,\eta)\mathrm{d}\xi\mathrm{d}\eta\Big|\\
		=&\Big|\sum_{k,l}\iint ik M(k,\eta)\frac{\hat{J}(k-l,\eta-\xi)}{p(k-l,\eta-\xi)}(l,\xi-lt)\hat{B}(l,\xi)\cdot(\eta-kt,- k)\frac{2(\eta-kt)}{-i\beta p(k,\eta)}\\
		&\times ikM(k,\eta)\bar{\hat{B}}(k,\eta)\mathrm{d}\xi\mathrm{d}\eta\Big|\\
		\lesssim&\sum_{k,l}\iint M(k,\eta)|(\Gamma \hat{Q})(k-l,\eta-\xi)||l\hat{Q}(l,\xi)|| M(k,\eta)\hat{Q}(k,\eta)|\mathrm{d}\xi\mathrm{d}\eta\\
		&+\sum_{k,l}\iint M(k,\eta)|(\Gamma \hat{Q})(k-l,\eta-\xi)||(\xi-lt)\hat{Q}(l,\xi)|| \Gamma(k,\eta)M(k,\eta)\hat{Q}(k,\eta)|\mathrm{d}\xi\mathrm{d}\eta\\
		\lesssim&\ \alpha\|MQ\|^3_{L^2}+\alpha\|\Gamma MQ\|_{L^2}\|\nabla_LMQ\|_{L^2}\|MQ\|_{L^2}.
	\end{align*}
	For $\overline{N}[B_0J_{\neq}Q]$ and $\overline{N}[B_{\neq}J_0Q]$, the estimates are similar as $I^1_2$ and $I^1_4$ separately. Therefore, the bootstrap hypotheses imply that
	\begin{equation*}
		\int_{t_0}^{t}\overline{N}[BJQ]\mathrm{d}\tau\lesssim\alpha\mu^{-\frac{1}{2}}\epsilon^3.
	\end{equation*}
	\subsubsection{Estimates on $\overline{S}$}\label{s443}
	\textbf{Treatment of} $\overline{S}[\Omega JQ]$\\
	By the fact that 
	\begin{equation*}
		\left|\frac{\partial^L_{XY}}{\Delta_L}\hat{\Omega}(k,\eta)\right|\leq |\hat{Z}(k,\eta)|,
	\end{equation*}
	we deduce
	\begin{align*}
		\overline{S}[\Omega J_{\neq}Q]\lesssim&\sum_{k,l}\iint M(k,\eta)|\hat{Z}(k-l,\eta-\xi)||\hat{J}_{\neq}(l,\xi)||(\Gamma M\hat{Q})(k,\eta)|\mathrm{d}\xi\mathrm{d}\eta\\
		\lesssim&\ \nu\|\nabla_LMZ\|_{L^2}\alpha e^{-\delta_0\mu^{\frac{1}{3}}t}\langle t\rangle \frac{\|MJ_{\neq}\|_{L^2}}{\langle t\rangle}\|\Gamma MQ\|_{L^2}.
	\end{align*}
	Using the fact $J_0=\partial_YB^1_0$, we have
	\begin{equation*}
		\overline{S}[\Omega J_0 Q]\lesssim\nu\|\nabla_LMZ\|_{L^2}\|\partial_Y B_0^1\|_{H^N}\|\Gamma MQ\|_{L^2}.
	\end{equation*}
	 Then it holds that
	\begin{equation*}
		\int_{t_0}^{t}\overline{S}[\Omega JQ]\mathrm{d}\tau\lesssim\alpha\mu^{-\frac{1}{2}}\epsilon^3.
	\end{equation*}
	\textbf{Treatment of} $\overline{S}[J\Omega Q]$\\
	We use commutator estimate of $\Gamma$ \eqref{comsym} to get
	\begin{align*}
		\overline{S}[J\Omega_{\neq}Q]\lesssim&\sum_{k,l}\iint|\Gamma(k,\eta)-\Gamma(l,\xi)| M(k,\eta)|\hat{Q}(k-l,\eta-\xi)||\hat{\Omega}_{\neq}(l,\xi)||M(k,\eta)\hat{Q}(k,\eta)|\mathrm{d}\xi\mathrm{d}\eta\\
		&+\sum_{k,l}\iint M(k,\eta)|\hat{Q}(k-l,\eta-\xi)||\hat{Z}(l,\xi)||M(k,\eta)\hat{Q}(k,\eta)|\mathrm{d}\xi\mathrm{d}\eta\\
		\lesssim&\ \alpha\|\nabla_LMQ\|_{L^2}\nu\|\nabla_LMZ\|_{L^2}\|MQ\|_{L^2}.
	\end{align*}
	For the zero-mode, we have
	\begin{equation*}
		\overline{S}[J \Omega_0 Q]\lesssim\nu\|\nabla_LMQ\|_{L^2}\|\partial_Y U_0^1\|_{H^N}\|\Gamma MQ\|_{L^2}.
	\end{equation*}
	By the bootstrap hypotheses, we conclude
	\begin{equation*}
		\int_{t_0}^{t}\overline{S}[ J\Omega Q]\mathrm{d}\tau\lesssim\alpha\mu^{-\frac{1}{2}}\epsilon^3.
	\end{equation*}
	\subsubsection{Estimates on $\widetilde{T}$}
	 The estimates of $\widetilde{T}$ rely mainly on the triangle inequality and Lemma \ref{indap}, and we omit further details to present the results for the sake of simplicity. \\
	\textbf{Treatment of} $\widetilde{T}[BJ\Omega]$\\
		It is worth noting that
	\begin{equation*}
		\nu\|M\Omega\|_{L^2}\lesssim\nu\|\partial_XM\Omega_{\neq}\|_{L^2}+\nu\|\partial_YU^1_0\|_{H^N}.
	\end{equation*}
	For $\widetilde{T}[B_{\neq}J_{\neq}\Omega]$, by Lemma \ref{indap}, we have
	\begin{align*}
		\widetilde{T}[B_{\neq}J_{\neq}\Omega]\lesssim&\sum_{k,l}\iint|M(k,\eta)|\hat{B}^1_{\neq}(k-l,\eta-\xi)||l\hat{J}_{\neq}(l,\xi)||M(k,\eta)\hat{\Omega}(k,\eta)|\mathrm{d}\xi\mathrm{d}\eta\\
		&+\sum_{k,l}\iint|M(k,\eta)|\hat{B}^2_{\neq}(k-l,\eta-\xi)|(\xi-lt)\hat{J}_{\neq}(l,\xi)||M(k,\eta)\hat{\Omega}(k,\eta)|\mathrm{d}\xi\mathrm{d}\eta\\
		\lesssim&\ \alpha\|MQ\|_{L^2}\nu\|\nabla_LMJ\|_{L^2}\|M\Omega\|_{L^2}+\ \nu\|MQ\|_{L^2}\alpha\|\nabla_LMJ\|_{L^2}\|M\Omega\|_{L^2}.
	\end{align*}
	The estimates of $\widetilde{T}[B_0J_{\neq}\Omega]$ and $\widetilde{T}[B_{\neq}J_0\Omega]$ are similar as $I^3_2$ and $I^3_4$. Therefore, the bootstrap hypotheses imply
	\begin{equation*}
		\int_{t_0}^{t}\widetilde{T}[ BJ\Omega ]\mathrm{d}\tau\lesssim\alpha\mu^{-\frac{1}{2}}\epsilon^3\langle t\rangle^2.
	\end{equation*}
	\textbf{Treatment of} $\widetilde{T}[UJJ]$\\
	For $\widetilde{T}[U^1_{\neq}J_{\neq}J]$, by triangle inequality and \eqref{bdu1}, we have
	\begin{align*}
		\widetilde{T}[U^1_{\neq}J_{\neq}J]\lesssim&\sum_{k,l}\iint M(k,\eta)|\hat{U}^1_{\neq}(k-l,\eta-\xi)||l\hat{J}_{\neq}(l,\xi)||M(k,\eta)\hat{J}(k,\eta)|\mathrm{d}\xi\mathrm{d}\eta\\
		\lesssim&\sum_{k,l}\iint A(k,\eta)|\langle |k-l,\eta-\xi|\rangle^N\hat{Z}_{\neq}(k-l,\eta-\xi)||l\hat{J}_{\neq}(l,\xi)||M(k,\eta)\hat{J}(k,\eta)|\mathrm{d}\xi\mathrm{d}\eta\\
		&+\sum_{k,l}\iint A(k,\eta)\frac{1}{\langle t\rangle}|\langle |k-l,\eta-\xi|\rangle\hat{\Omega}_{\neq}(k-l,\eta-\xi)||l\langle |l,\xi\rangle^N\hat{J}_{\neq}(l,\xi)||M(k,\eta)\hat{J}(k,\eta)|\mathrm{d}\xi\mathrm{d}\eta\\
		\lesssim&\ \nu\|\nabla_LMZ\|_{L^2}\alpha e^{-\delta_0\mu^{\frac{1}{3}}t}\langle t\rangle^2\frac{\|MJ\|^2_{L^2}}{\langle t\rangle^2}+\nu\|\nabla_LM\Omega\|_{L^2}\alpha\|\nabla_LMJ\|_{L^2}\frac{\|MJ\|_{L^2}}{\langle t\rangle}.
	\end{align*}
	Similarly, by \eqref{bdu1} and \eqref{bdu2t}, we obtain
	\begin{equation*}
		\widetilde{T}[U^2J_{\neq}J]\lesssim\ \nu\|MZ\|_{L^2}\alpha e^{-\delta_0\mu^{\frac{1}{3}}t}\langle t\rangle^3\frac{\|MJ\|^2_{L^2}}{\langle t\rangle^2}+\nu\|\nabla_LMZ\|_{L^2}\alpha\|\nabla_LMJ\|_{L^2}\frac{\|MJ\|_{L^2}}{\langle t\rangle}.
	\end{equation*}
	The estimates of $T[U_0J_{\neq}J]$ and $T[U_{\neq}J_0J]$ are analogous to $I^3_2$ and $I^3_3$. Therefore, by the bootstrap hypotheses and H$\ddot{\mathrm{o}}$lder's inequality, we deduce
	\begin{equation*}
		\int_{t_0}^{t}\widetilde{T}[ UJJ ]\mathrm{d}\tau\lesssim\alpha\mu^{-\frac{1}{2}}\epsilon^3\langle t\rangle^2.
	\end{equation*}
	\subsubsection{Estimates on $\widetilde{S}$}
	\textbf{Treatment of} $\widetilde{S}[\Omega JJ]$\\
	For $\widetilde{S}[\Omega J_{\neq}J]$, we have
	\begin{equation*}
		\widetilde{S}[\Omega J_{\neq}J]\lesssim\nu\|\nabla_LMZ\|_{L^2}\alpha e^{-\delta_0\mu^{\frac{1}{3}}t}\langle t\rangle^2\frac{\|MJ\|^2_{L^2}}{\langle t\rangle^2}+\nu\|\nabla_LM\Omega\|_{L^2}\alpha\|MJ_{\neq}\|_{L^2}\frac{\|MJ\|_{L^2}}{\langle t\rangle}.
	\end{equation*}
	The estimate of $\widetilde{S}[\Omega J_{0}J]$ is similar to $I^4_2$. Therefore, the bootstrap hypotheses and H$\ddot{\mathrm{o}}$lder's inequality imply that
	\begin{equation*}
		\int_{t_0}^{t}\widetilde{S}[ \Omega JJ ]\mathrm{d}\tau\lesssim\alpha\mu^{-\frac{1}{2}}\epsilon^3\langle t\rangle^2.
	\end{equation*}
	\subsubsection{Estimates on $R$}
	Thanks to Lemma \ref{indap}, the improvement of $R$ is standard. First we treat $R[B^2_{\neq}B^1_{\neq}U^1_0]$ and $R[U^2_{\neq}B^1_{\neq}B^1_0]$. It holds that
	\begin{equation*}
		R[B^2_{\neq}B^1_{\neq}U^1_0]\lesssim\mu^{-\frac{1}{6}}\nu\|MQ\|_{L^2}\alpha\mu^{\frac{1}{6}}\|MQ\|_{L^2}\|\partial_{Y}U^1_0\|_{H^N},
	\end{equation*}
	and
	\begin{equation*}
		R[U^2_{\neq}B^1_{\neq}B^1_0]\lesssim\mu^{-\frac{1}{2}}\nu\| MZ\|_{L^2}\alpha\|MQ\|_{L^2}\mu^{\frac{1}{2}}\|\partial_{Y}B^1_0\|_{H^N}.
	\end{equation*}
	Turing to $R[B^2_{\neq}J_{\neq}\Omega_0]$ and $R[U^2_{\neq}J_{\neq}J_0]$, we have
	\begin{equation*}
		R[B^2_{\neq}J_{\neq}\Omega_0]\lesssim\mu^{-\frac{1}{6}}\nu\mu^{\frac{1}{6}}\|MQ\|_{L^2}\alpha\frac{\|MJ\|_{L^2}}{\langle t\rangle}\frac{\|\partial_{Y}\Omega_0\|_{H^N}}{\langle t\rangle},
	\end{equation*}
	and
	\begin{equation*}
		R[U^2_{\neq}J_{\neq}J_0]\lesssim\mu^{-\frac{1}{2}}\nu\|MZ\|_{L^2}\alpha\frac{\|MJ\|_{L^2}}{\langle t\rangle}\frac{\mu^{\frac{1}{2}}\|\partial_{Y}J_0\|_{H^N}}{\langle t\rangle}.
	\end{equation*}
	For $R[B^1_{\neq}\Omega_{\neq}J_0]$, by the fact 
	\begin{equation*}
		|\hat{\Omega}_{\neq}(l,\xi)|\lesssim|(\widehat  {\nabla_LZ})(l,\xi)|,
	\end{equation*} we get
	\begin{equation*}
		R[B^1_{\neq}\Omega_{\neq}J_0]\lesssim\nu\frac{\|MJ_{\neq}\|_{L^2}}{\langle t\rangle}e^{-\delta_0\mu^{\frac{1}{3}}t}\alpha\|\nabla_LMZ\|_{L^2}\frac{\|J_0\|_{H^N}}{\langle t\rangle}.
	\end{equation*}
	Therefore, by the bootstrap hypotheses and H$\ddot{\mathrm{o}}$lder's inequality, we deduce
	\begin{equation*}
		\int_{t_0}^{t}R \mathrm{d}\tau\lesssim\alpha\mu^{-\frac{1}{2}}\epsilon^3.
	\end{equation*}
	Whence we complete the proof of Proposition \ref{bsup2}.
	\subsection{Proof of bootstrap in the regime $\nu\leq\mu^3$}
	The process is almost similar to that in section \ref{case1}, expect that we use a lower bound of $M^6$ \eqref{217}. We only give a brief proof.
	\subsubsection{Estimates on $T$}
	Recalling 
		\begin{align*}
			T=&Re\left\langle M(U\cdot\nabla_{L}\Omega)_{\neq},M\Omega_{\neq}\right\rangle+Re\left\langle M(U\cdot\nabla_{L}J)_{\neq},MJ_{\neq}\right\rangle\\
			&+Re\big(\left\langle M(B\cdot\nabla_{L}J)_{\neq},M\Omega_{\neq}\right\rangle+\left\langle M(B\cdot\nabla_{L}\Omega)_{\neq},MJ_{\neq}\right\rangle\big),
		\end{align*}
	we only estimate the first term 
	\begin{equation*}
		I^5=Re\left\langle M(U\cdot\nabla_{L}\Omega)_{\neq},M\Omega_{\neq}\right\rangle,
	\end{equation*} 
	and the other terms are treated similarly. Using the cancellation $Re\langle(U\cdot\nabla_LM\Omega_{\neq}),M\Omega_{\neq}\rangle=0$,
	we have
	\begin{align*}
		I^5=&Re\langle M(U\cdot\nabla_L\Omega)_{\neq},M\Omega_{\neq}\rangle-Re\langle(U\cdot\nabla_LM\Omega_{\neq}),M\Omega_{\neq}\rangle\\
		=&\left\langle\left[M,U^1_{\neq}\partial_{X}\right]\Omega_{\neq},M\Omega_{\neq}\right\rangle
		+\left\langle\left[M,U^1_0\partial_{X}\right]\Omega_{\neq},M\Omega_{\neq}\right\rangle\\
		&+\left\langle\left[M,U^2_{\neq}\partial_Y^L\right]\Omega_{\neq},M\Omega_{\neq}\right\rangle+\left\langle M(U^2_{\neq}\partial_Y\Omega_0),M\Omega_{\neq}\right\rangle=:I^5_1+I^5_2+I^5_3+I^5_4.
	\end{align*}
	\textbf{Treatment of} $I^5_1$\\
	By the decomposition \eqref{part}, we split $I^5_1$ into two parts $I^5_{11}$ and $I^5_{12}$. Then using \eqref{estl} and \eqref {217}, we obtain
	\begin{align*}
		I^5_{11}=&\sum_{(k,l)\in R_1}\iint\left|M(k,\eta)-M(l,\xi)\right|\left|\hat{U}^1_{\neq}(k-l,\eta-\xi)\right|\left|l\hat{\Omega}(l,\xi)\right|\left|M(k,\eta)\hat{\Omega}_{\neq}(k,\eta)\right|\mathrm{d}\xi\mathrm{d}\eta\\
		\lesssim&\sum_{k,l}\iint|\langle|k-l,\eta-\xi|\rangle^N(AM^6\hat{\Omega}_{\neq})(k-l,\eta-\xi)||l\hat{\Omega}_{\neq}(l,\xi)||M(k,\eta)\hat{\Omega}_{\neq}(k,\eta)|\mathrm{d}\xi\mathrm{d}\eta\\
		&+\sum_{k,l}\iint\left|\frac{(AM^6\hat{\Omega}_{\neq})(k-l,\eta-\xi)}{|k-l|}\right|(|k-l|+\nu^{-\frac{1}{6}}|l|^{\frac{1}{3}}|k|^{\frac{1}{3}})|\langle|l,\xi|\rangle^N\hat{\Omega}_{\neq}(l,\xi)||M(k,\eta)\hat{\Omega}_{\neq}(k,\eta)|\mathrm{d}\xi\mathrm{d}\eta\\
		\lesssim&\ \mu^{-\frac{1}{3}}\|M\Omega_{\neq}\|^3_{L^2}+\mu^{-\frac{1}{3}}\nu^{-\frac{1}{6}}\|M\Omega_{\neq}\|_{L^2}\||\partial_X|^{\frac{1}{3}}M\Omega\|^2_{L^2}.
	\end{align*}
	For $I^5_{12}$, using \eqref{comm} and \eqref{217}, we have
	\begin{align*}
		I^5_{12}=&\sum_{(k,l)\in R_2}\iint\left|M(k,\eta)-M(l,\xi)\right|\left|\hat{U}^1_{\neq}(k-l,\eta-\xi)\right|\left|l\hat{\Omega}(l,\xi)\right|\left|M(k,\eta)\hat{\Omega}_{\neq}(k,\eta)\right|\mathrm{d}\xi\mathrm{d}\eta\\
		\lesssim&\sum_{(k,l)\in R_2}\iint e^{\delta_0\nu^{\frac{1}{3}}t}\big(\langle|k-l,\eta-\xi|\rangle^N+\langle|l,\xi|\rangle^N\big)\big(\nu^{\frac{1}{3}}|k|^{-\frac{1}{3}}|\eta-\xi|+|l|^{-1}(\nu^{\frac{1}{2}}|\eta|+1)|k-l|\big)\\
		&\times\left|\hat{U}^1_{\neq}(k-l,\eta-\xi)\right|\left|l\hat{\Omega}(l,\xi)\right|\left|M(k,\eta)\hat{\Omega}_{\neq}(k,\eta)\right|\mathrm{d}\xi\mathrm{d}\eta\\
		\lesssim&\ \mu^{-\frac{1}{3}}t\nu^{\frac{1}{2}}\|\nabla_LM\Omega_{\neq}\|_{L^2}e^{-\delta_0\nu^{\frac{1}{3}}t}\|M\Omega_{\neq}\|_{L^2}^2+\mu^{-\frac{1}{3}}\|M\Omega_{\neq}\|^3_{L^2}+\mu^{-\frac{1}{3}}\|M\Omega_{\neq}\|_{L^2}\nu^{\frac{1}{3}}\||\partial_X|^{\frac{1}{3}}M\Omega\|^2_{L^2}
	\end{align*}
	Therefore, by the bootstrap hypotheses, we deduce
	\begin{equation*}
		\int_{t_0}^{t}I^5_1 \mathrm{d}\tau\lesssim\mu^{-\frac{1}{3}}\nu^{-\frac{1}{2}}\epsilon^3.
	\end{equation*}
	\textbf{Treatment of} $I^5_2$\\
	Similar to $I^3_2$, by \eqref{commeq} and \eqref{217}, we have
	\begin{align*}
		I^5_2\lesssim&\sum_{k\neq0}\iint\big[\big(|k|^{-1}+\nu^{\frac{1}{3}}|k|^{-\frac{1}{3}}\big)|\eta-\xi|\langle|k,\xi|\rangle^N+e^{\delta_0\nu^{\frac{1}{3}}t}\langle|k,\xi|\rangle^{N-1}|\eta-\xi|+e^{\delta_0\nu^{\frac{1}{3}}t}\langle|\eta-\xi|\rangle^{N}\big]\\
		&\times|\hat{U}^1_{0}(\eta-\xi)||k \hat{\Omega}(k,\xi)|| M(k,\eta)\hat{\Omega}_{\neq}(k,\eta)|\mathrm{d}\xi\mathrm{d}\eta\\
		\lesssim&\ \mu^{-\frac{1}{3}}\left(\ \|U^1_0\|_{H^N}\|M\Omega_{\neq}\|^2_{L^2}+\nu^{\frac{1}{3}}\|U^1_0\|_{H^N}\||\partial_X|^{\frac{1}{3}}M\Omega\|^2_{L^2}\right).
	\end{align*}
	\textbf{Treatment of} $I^5_3$\\
	By \eqref{bdu2} and \eqref{217}, we get
	\begin{align*}
		I^5_3\lesssim&\sum_{k,l}\iint e^{\delta_0\nu^{\frac{1}{3}}t}(\langle|k-l,\eta-\xi|\rangle^N+\langle l,\xi\rangle^N)|(\Gamma M^6\hat{\Omega}_{\neq})(k-l,\eta-\xi)|\\
		&\times|(\xi-lt)\hat{\Omega}_{\neq}(l,\xi)||M(k,\eta)\hat{\Omega}_{\neq}(k,\eta)|\mathrm{d}\xi\mathrm{d}\eta\\
		\lesssim&\ \|\Gamma M\Omega\|_{L^2}\mu^{-\frac{1}{3}}\|\nabla_LM\Omega_{\neq}\|_{L^2}\|M\Omega_{\neq}\|_{L^2}.
	\end{align*}
	\textbf{Treatment of} $I^5_4$\\
	Using \eqref{bdu2t} and \eqref{217}, we deduce
	\begin{align*}
		I^5_4=&\sum_{k}\iint M(k,\eta)\hat{U}^2(k,\eta-\xi)i\xi\hat{\Omega}(0,\xi)M(k,\eta)\hat{\Omega}_{\neq}(k,\eta)\mathrm{d}\xi\mathrm{d}\eta\\
		\lesssim&\ \|\Gamma M\Omega\|_{L^2}\|U^1_0\|_{H^N}\|M\Omega_{\neq}\|_{L^2}+\mu^{-\frac{1}{3}}\|\Gamma M\Omega\|_{L^2}\frac{\|\partial_Y\Omega_0\|_{H^N}}{\langle t\rangle}\|M\Omega_{\neq}\|_{L^2}.
	\end{align*}
	Therefore, by the bootstrap hypotheses, we conclude
	\begin{equation}\label{tiltilt}
		\int_{t_0}^{t}T \mathrm{d}\tau\lesssim\mu^{-\frac{1}{3}}\nu^{-\frac{1}{2}}\epsilon^3.
	\end{equation}
	\subsubsection{Estimates on $N$}
	By the fact
	\begin{equation*}
		\left|\frac{\partial_Y^L}{\Delta_L}\hat{J}_{\neq}(k,\eta)\right|\leq\left|\frac{1}{|k|}\Gamma(k,\eta)\hat{J}_{\neq}(k,\eta)\right|,
	\end{equation*}
	we have
	\begin{align*}
		N[U\Omega_{\neq}J_{\neq}]=&\left|\left\langle M(U\cdot\nabla_{L}\Omega_{\neq}),\frac{2\partial_Y^L}{\beta \Delta_L}MJ_{\neq}\right\rangle\right|\\
		\lesssim&\|M\Omega_{\neq}\|_{L^2}\mu^{-\frac{1}{3}}\|\nabla_LM\Omega_{\neq}\|_{L^2}\|\Gamma MJ\|_{L^2}+\|U^1_0\|_{H^N}\mu^{-\frac{1}{3}}\|M\Omega_{\neq}\|_{L^2}\|\Gamma MJ\|_{L^2}.
	\end{align*}
	Using \eqref{bdu2t} and \eqref{217}, we deduce
	\begin{align*}
		N[U\Omega_0J_{\neq}]=&\left|\left\langle M(U^2\partial_Y\Omega_0),\frac{2\partial_Y^L}{\beta \Delta_L}MJ_{\neq}\right\rangle\right|\\
		\lesssim&\|\Gamma M\Omega\|_{L^2}\|U^1_0\|_{H^N}\|\Gamma MJ\|_{L^2}+\| M\Omega\|_{L^2}\frac{\|\partial_Y\Omega_0\|_{H^N}}{\langle t\rangle}\|\Gamma MJ\|_{L^2}.
	\end{align*}
	Thus by bootstrap hypotheses, we conclude
	\begin{equation}\label{tiltiln}
		\int_{t_0}^{t}N \mathrm{d}\tau\lesssim\mu^{-\frac{1}{3}}\nu^{-\frac{1}{2}}\epsilon^3.
	\end{equation}
	\subsubsection{Estimates on $S$}
	Let 
	\begin{equation*}
		I^6:=\left|\left\langle M\left(\left(\frac{2\partial_{XY}^L}{\Delta_L} J\right)\Omega\right)_{\neq}, M J_{\neq}\right\rangle\right|,
	\end{equation*}
	by \eqref{217} and the fact that $\langle|k,\eta-kt|\rangle\langle|k,\eta|\rangle\gtrsim\langle|kt|\rangle$, we have
	\begin{equation*}
		I^6[J\Omega_{\neq}J_{\neq}]\leq\|MJ_{\neq}\|_{L^2}^2\mu^{-\frac{1}{3}}\|M\Omega_{\neq}\|_{L^2},
	\end{equation*}
	and
	\begin{equation*}
		I^6[J\Omega_0J_{\neq}]\leq\|MJ_{\neq}\|_{L^2}^2\|U^1_0\|_{H^N}+\mu^{-\frac{1}{3}}\|MJ_{\neq}\|_{L^2}^2\frac{\|\Omega_{0}\|_{H^N}}{\langle t\rangle}.
	\end{equation*}
	Thus the bootstrap hypotheses imply that
	\begin{equation}\label{tiltils}
		\int_{t_0}^{t}S \mathrm{d}\tau\lesssim\mu^{-\frac{1}{3}}\nu^{-\frac{1}{2}}\epsilon^3.
	\end{equation}
	Then we put	together \eqref{tiltilt}-\eqref{tiltils} to improve the bootstrap hypothesis \eqref{bsho2}.
	\subsubsection{Estimates on $R$}
	For $R[U^2_{\neq}U^1_{\neq}U^1_0]$, using Lemma \ref{indap} and \eqref{217}, we deduce
	\begin{equation*}
		R[U^2_{\neq}U^1_{\neq}U^1_0]\leq\|\Gamma M\Omega\|_{L^2}\|M\Omega_{\neq}\|_{L^2}\|\partial_YU^1_0\|_{H^N}.
	\end{equation*}
	Turning to $R[U^2_{\neq}\Omega_{\neq}\Omega_0]$, we have
	\begin{equation*}
		R[U^2_{\neq}\Omega_{\neq}\Omega_0]\lesssim\|\Gamma M\Omega\|_{L^2}\|M\Omega_{\neq}\|_{L^2}\frac{\|\partial_Y\Omega_0\|_{H^N}}{\langle t\rangle}.
	\end{equation*}
	Considering $R[B^1_{\neq}\Omega_{\neq}J_0]$, it holds that
	\begin{equation*}
		R[B^1_{\neq}\Omega_{\neq}J_0]\lesssim\|MJ_{\neq}\|_{L^2}\|M\Omega_{\neq}\|\frac{\|J_0\|_{H^N}}{\langle t\rangle}.
	\end{equation*}
	Then by the bootstrap hypothesis, we conclude
	\begin{equation*}
		\int_{t_0}^{t}R \mathrm{d}\tau\lesssim\nu^{-\frac{1}{2}}\epsilon^3,
	\end{equation*}
	which improve the bootstrap hypothesis \eqref{bszero2}. Whence we complete the proof of Proposition \ref{bsup3}.
	\section{Appendix}\label{append}
	In this section, we give a proof of Proposition \ref{linearest3}--\ref{linearest2}.
	\begin{proof}[Proof of Proposition \ref{linearest}]
		Due to the translation invariance, the linearized problem is best-studied mode-by-mode in $X,Y$. Taking the Fourier transform of equations \eqref{mequation} in $X,Y$ gives 
		\begin{equation*}
			\left\{
			\begin{array}{l}
				\displaystyle\partial_t(M_k\hat{Z}_k)-\beta ikM_k\hat{Q}_k+(\nu p_k-\frac{\partial_tM_k}{M_k})M_k\hat{Z}_k+\frac{1}{2}\frac{\partial_tp_k}{p_k}M_k\hat{Z}_k=0,\\
				\displaystyle\partial_t(M_k\hat{Q}_k)-\beta ikM_k\hat{Z}_k+(\mu p_k-\frac{\partial_{t}M_k}{M_k})M_k\hat{Q}_k-\frac{1}{2}\frac{\partial_tp_k}{p_k}M_k\hat{Q}_k=0,
			\end{array}\right.
		\end{equation*}
		where $p_k=k^2+(\eta-kt)^2$. Testing the above equations against $M\hat{Z}_k,M\hat{Q}_k$ separately, we obtain
		\begin{align*}
			\frac{1}{2}\frac{\rm{d}}{\rm{d}t}\lVert M_k\hat{Z}_k\rVert_{L^2}^2=&-\left(\nu \lVert \sqrt{p_k} M_k\hat{Z}_k\rVert^2+\sum_{i=1}^{4}\left\lVert\sqrt{-\frac{\partial_tM^i_k}{M^i_k}}M_k\hat{Z}_k\right\rVert_{L^2}^2\right)+\delta_0\lambda^{\frac{1}{3}}\lVert M_k\hat{Z}_k\rVert_{L^2}^2\notag\\
			&-\frac{1}{2}\left\langle\frac{\partial_tp_k}{p_k}M_k\hat{Z}_k,M_k\hat{Z}_k\right\rangle+Re\langle\beta ikM_k\hat{Q}_k,M_k\hat{Z}_k\rangle, \\
			\frac{1}{2}\frac{\rm{d}}{\rm{d}t}\lVert M_k\hat{Q}_k\rVert_{L^2}^2=&-\left(\mu \lVert \sqrt{p_k} M_k\hat{Q}_k\rVert^2+\sum_{i=1}^{4}\left\lVert\sqrt{-\frac{\partial_tM^i_k}{M^i_k}}M_k\hat{Z}_k\right\rVert_{L^2}^2\right)+\delta_0\lambda^{\frac{1}{3}}\lVert M_k\hat{Q}_k\rVert_{L^2}^2\notag\\
			&+\frac{1}{2}\left\langle\frac{\partial_tp_k}{p_k}M_k\hat{Q}_k,M_k\hat{Q}_k\right\rangle+Re\langle\beta ikM_k\hat{Z}_k,M_k\hat{Q}_k\rangle.
		\end{align*}
		For the mixed term, we have
		\begin{align*}
			-\frac{1}{2}\frac{\rm{d}}{\rm{d}t}Re\left\langle\frac{\partial_tp_k}{\beta ikp_k}\overline{\chi}_kM_k\hat{Z}_k,M_k\hat{Q}_k\right\rangle&=\frac{1}{2}\left\langle\frac{\partial_tp_k}{p_k}\overline{\chi}_kM_k\hat{Z}_k,M_k\hat{Z}_k\right\rangle-\frac{1}{2}\left\langle\frac{\partial_tp_k}{p_k}\overline{\chi}_kM_k\hat{Q}_k,M_k\hat{Q}_k\right\rangle\notag\\
			&+Re\left\langle\frac{\partial_tp_k}{2\beta ikp_k}\big((\nu+\mu)p_k-2\delta_0\lambda^{\frac{1}{3}}k^{\frac{2}{3}}-2\sum_{i=1}^{3}\frac{\partial_tM^i_k}{M^i_k}\big)\overline{\chi}_kM_k\hat{Z}_k,M_k\hat{Q}_k\right\rangle\notag\\
			&-\frac{1}{2}Re\left\langle\frac{p_k\partial_{tt}p_k-(\partial_tp_k)^2}{\beta ikp_k^2}\overline{\chi}_kM_k\hat{Z}_k,M_k\hat{Q}_k\right\rangle,
		\end{align*}
		where we used $\displaystyle \frac{\partial_tM_k}{M_k}=\delta_0\lambda^{\frac{1}{3}}+\sum_{i=1}^{3}\frac{\partial_tM^i_k}{M^i_k}$. Combining the above three energy identities, we arrive at the following inequality
		\begin{equation*}
			\frac{\rm{d}}{\mathrm{d}t}E_k+DF+CKF_1+CKF_2\leq\sum_{i=0}^{6}L_i,
		\end{equation*}
		where
		\begin{align*}
			&E_k=\frac{1}{2}\left(\lVert M_k\hat{Z}_k\rVert^2_{L^2}+\left\lVert 	M_k\hat{Q}_k\right\rVert^2_{L^2}-Re\left\langle\frac{\partial_tp_k}{|\beta|ikp_k}M_k\hat{Z}_k,M_k\hat{Q}_k\right\rangle \right),\\
			&DF=\nu \lVert \sqrt{p_k} M_k\hat{Z}_k\rVert^2_{L^2}+\mu \lVert \sqrt{p_k} M_k\hat{Q}_k\rVert^2_{L^2},\\
			&CKF_1=\sum_{F\in\{Z,Q\}}\sum_{i=1,2}\left\lVert\sqrt{-\frac{\partial_tM^i_k}{M^i_k}}M_k\hat{F}_k\right\rVert_{L^2}^2,\ CKF_2=\sum_{F\in\{Z,Q\}}\left\lVert\sqrt{-\frac{\partial_tM^3_k}{M^3_k}}M_k\hat{F}_k\right\rVert_{L^2}^2,
		\end{align*}
		and $L_0-L_6$ are given by
		\begin{equation}\label{error}
			\begin{aligned}
				&L_0:=\sum_{F\in\{Z,Q\}}\delta_0\lambda^{\frac{1}{3}}\left(\lVert M_k\hat{F}_k\rVert_{L^2}^2+\left\lVert\sqrt{\frac{\partial_tp_k}{2\beta ikp_k}}M_k\hat{F}_k\right\rVert^2_{L^2}\right),\\
				&L_1:=(\nu+\mu) Re\left\langle\frac{\partial_tp_k}{2\beta ik}\overline{\chi}_kM_k\hat{Z}_k,M_k\hat{Q}_k\right\rangle,\ 
				L_2:=\frac{1}{2}Re\left\langle\frac{p_k\partial_{tt}p_k-(\partial_tp_k)^2}{\beta ikp_k^2}\overline{\chi}_kM_k\hat{Z}_k,M_k\hat{Q}_k\right\rangle,\\
				&L_3:=Re\left\langle\frac{\partial_tp_k}{\beta ikp_k}\frac{\partial_tM^2_k}{M^2_k}\overline{\chi}_kM_k\hat{Z}_k,M_k\hat{Q}_k\right\rangle,\ \ 
				L_4:=Re\left\langle\frac{\partial_tp_k}{\beta ikp_k}\frac{\partial_tM^1_k}{M^1_k}\overline{\chi}_kM_k\hat{Z}_k,M_k\hat{Q}_k\right\rangle,\\
				&L_5:=Re\left\langle\frac{\partial_tp_k}{\beta ikp_k}\frac{\partial_tM^3_k}{M^3_k}\overline{\chi}_kM_k\hat{Z}_k,M_k\hat{Q}_k\right\rangle,\\
				&L_6:=-\frac{1}{2}\left\langle (1-\overline{\chi}_k)\frac{\partial_tp_k}{p_k}M_k\hat{Z}_k,M_k\hat{Z}_k\right\rangle+\frac{1}{2}\left\langle (1-\overline{\chi}_k)\frac{\partial_tp_k}{p_k}M_k\hat{Q}_k,M_k\hat{Q}_k\right\rangle.
			\end{aligned}
		\end{equation}
		\textbf{Case 1. $0<\mu^3\leq\nu\leq\mu\leq1$}\\ When $|k|\leq C_2\nu^{-1/2}$, using \eqref{213} and the assumption $\nu\leq\mu$, we get
		\begin{equation} \label{l0}
			L_0\leq \frac{2\delta_0}{C_2}CKF_1.
		\end{equation}
		For $L_1$, since $|\partial_tp_k|/|k|\leq2\sqrt{p_k}$, we have
		\begin{equation*}
			L_1\leq\frac{\nu}{10|\beta|}\left\lVert\sqrt{p_k}M_k\hat{Z}_k\right\rVert^2_{L^2}+\frac{10\mu}{|\beta|}\lVert M_k\hat{Z}_k\rVert^2_{L^2}+\frac{\mu}{10|\beta|}\left\lVert\sqrt{p_k}M_k\hat{Q}_k\right\rVert^2_{L^2}+\frac{10\nu}{|\beta|}\lVert M_k\hat{Q}_k\rVert^2_{L^2}.
		\end{equation*}
		Thanks to the assumption $0<\mu^3\leq\nu\leq\mu\leq1$ and \eqref{213}, it follows that
		\begin{equation}\label{l1}
			L_1\leq\left(\frac{1}{10|\beta|}+\frac{10}{C_2|\beta|}\right)(DF+CKF_1).
		\end{equation}
		We turn to $L_2$ now. Observing \begin{equation*}
			\left|\frac{p_k\partial_{tt}p_k-(\partial_tp_k)^2}{2\beta kp_k^2}\right|\leq\frac{|k|}{|\beta| p_k}\leq\frac{1}{|\beta |C_1}\left|\frac{\partial_tM_k^1}{M_k^1}\right|,
		\end{equation*}
		we deduce \begin{equation}\label{l2}
			L_2\leq\frac{1}{|\beta|C_1}CKF_1.
		\end{equation}
		For $L_3$, combining
		\begin{equation*} 
			\left|\frac{\partial_t p_k}{\beta k p_k}\right|\leq\frac{2}{|\beta|\sqrt{p_k}}\leq\frac{2}{|\beta|\sqrt{C_1}|k|}\sqrt{-\frac{\partial_t M_k^1}{M_k^1}}
		\end{equation*}
		with $|\partial_t M^2_k/M^2_k|\leq2|C_2|^2|k|^\frac{2}{3}$, we obtain
		\begin{equation}\label{l3}
			L_3\leq\frac{1}{10|\beta|}\left\lVert\sqrt{-\frac{\partial_tM^2_k}{M^2_k}}M_k\hat{Z}_k\right\rVert^2_{L^2}+\frac{20C_2}{|k|^\frac{4}{3}|\beta|C_1}\left\lVert\sqrt{-\frac{\partial_tM^1_k}{M^1_k}}M_k\hat{Q}_k\right\rVert^2_{L^2}\leq\left(\frac{1}{10|\beta|}+\frac{20C_2}{|\beta|C_1}\right)CKF_1.
		\end{equation}
		To control $L_4$, noticing
		\begin{equation*}\label{msmd}
			\sqrt{\frac{k^2}{p_k}}\frac{\partial_tM^1_k}{M^1_k}=\frac{C_1}{C_3}\frac{\partial_t M^3_k}{M^3_k},
		\end{equation*}
		we have
		\begin{equation}\label{l4}
			L_4\leq\left|\left\langle\frac{2C_1}{|\beta|C_3}\frac{\partial_tM^3_k}{M^3_k}M_k\hat{Z}_k,M_k\hat{Q}_k\right\rangle\right|\leq\frac{C_1}{|\beta|C_3}CKF_2.
		\end{equation}
		For $L_5$, it holds
		\begin{equation}\label{l5}
			L_5\leq\frac{1}{2|\beta|}CKF_2,
		\end{equation}
		where we used $|\partial_tp_k|/(|k\beta|p_k)\leq1/|\beta|$.\\
		Turning to $L_6$, by the fact
		\begin{align*}
			\left|\frac{\partial_tp_k}{2p_k}\right|\leq\frac{\nu}{50}p_k,\ \mathrm{if}\ |t-\frac{\eta}{k}|\geq4\nu^{-\frac{1}{3}},\\
			\left|\frac{\partial_tp_k}{2p_k}\right|\leq\frac{\mu}{50}p_k,\ \mathrm{if}\ |t-\frac{\eta}{k}|\geq4\mu^{-\frac{1}{3}},
		\end{align*}
		we get
		\begin{equation}\label{l6}
			L_6\leq\frac{\nu}{50}\left\lVert\sqrt{p_k}M_k\hat{Z}_k\right\rVert^2_{L^2}+\frac{\mu}{50}\left\lVert\sqrt{p_k}M_k\hat{Q}_k\right\rVert^2_{L^2}\leq\frac{1}{50}DF.
		\end{equation}
		Let 
		\begin{equation*}
			\begin{aligned}
				D_k=&\nu \lVert \sqrt{p_k} M_k\hat{Z}_k\rVert^2_{L^2}+\mu \lVert \sqrt{p_k} M_k\hat{Q}_k\rVert^2_{L^2}+\left\lVert\Gamma_kM_k\hat{Z}_k\right\rVert^2_{L^2}+\left\lVert\Gamma_kM_k\hat{Q}_k\right\rVert^2_{L^2}\\
				&+\nu^{\frac{1}{3}}|k|^{\frac{2}{3}}\lVert M_k\hat{Z}_k\rVert^2_{L^2}+\nu^{\frac{1}{3}}|k|^{\frac{2}{3}}\lVert M_k\hat{Q}_k\rVert^2_{L^2},
			\end{aligned}
		\end{equation*}
		then $D_k$ satisfies
		\begin{equation}\label{df}
			D_k\leq\left(1+\frac{1}{C_2}\right)(DF+CKF_1).
		\end{equation}
		Note $\delta_0,C_1,C_2$, and $C_3$ satisfy
		\begin{equation*}
			\delta_0\leq\frac{1}{100|\beta|},C_1>1000C_2,C_2>100,\ \mathrm{and}\ C_3>\frac{C_1}{|\beta|-\frac{1}{2}}.\ 
		\end{equation*}
		By \eqref{l0}-\eqref{df},
		we conclude
		\begin{equation}\label{klinearest}
			\frac{\rm{d}}{\mathrm{d}t}E_k+\frac{1}{100|\beta|}D_k\leq0.
		\end{equation}
		Then integrating it in time and summing over $k\in \mathbb{Z}^*$, we deduce \eqref{linest} for $|k|\leq C_2\nu^{-\frac{1}{2}}$.
		\noindent When $|k|>C_2\nu^{-\frac{1}{2}}$, \eqref{213} implies
		\begin{equation}\label{bdm2}
			\lVert M_k\hat{Z}_k\rVert^2_{L^2}+\lVert M_k\hat{Q}_k\rVert^2_{L^2}\leq\nu^{\frac{1}{3}}|k|^{\frac{2}{3}}C_2^{-\frac{2}{3}}\left(\lVert M_k\hat{Z}_k\rVert^2_{L^2}+\lVert M_k\hat{Q}_k\rVert^2_{L^2}\right)\leq C_2^{-\frac{5}{3}}(CKF_1+DF).
		\end{equation}
		Similar to \eqref{l0}-\eqref{l3}, we have
		\begin{align*}
			&L_0\leq\frac{2\delta_0}{C_2}(DF+CKF_1),\\  &L_1\leq\left(\frac{1}{10|\beta|}+\frac{50}{(C_2)^{\frac{5}{3}}|\beta|}\right)(DF+CKF_1),\\
			&L_2\leq\frac{2}{|C_2|^{\frac{5}{3}}|\beta|}(DF+CKF_1),
		\end{align*}
		and
		\begin{equation*}
			L_3\leq\left(\frac{1}{10|\beta|}+\frac{20}{|C_2|^{\frac{2}{3}}|\beta|}\right)(DF+CKF_1).
		\end{equation*}
		Note $L4=L5=0$, $L_6\leq\frac{1}{50}DF$, and 
		\begin{equation*}
			D_k\leq\left(1+\frac{2}{C_2}\right)(DF+CKF_1).
		\end{equation*}
		By choosing $C_2>3000$,
		we again arrive at \eqref{klinearest}. Then integrating it in time and summing over $k\in \mathbb{Z}^*$, we deduce \eqref{linest} for $0<\mu^3\leq\nu\leq\mu\leq1$.\\
		\textbf{Case 2. $0<\mu\leq\nu\leq\mu^{\frac{1}{3}}\leq1$}\\
		The proof is exactly the same as case 1, except exchanging the role of $\nu$ and $\mu$. For instance, when $|k|\leq C_2\mu^{-1/2}$, 
		we use $|\partial_tp_k|/|k|\leq2\sqrt{p_k}$ and the assumption $\nu\leq\mu^{\frac{1}{3}}$ to get 
		\begin{equation*}
			L_1\leq\frac{\nu}{10|\beta|}\left\lVert\sqrt{p_k}M_k\hat{Z}_k\right\rVert^2_{L^2}+\frac{20\nu}{|\beta|}\lVert M_k\hat{Q}_k\rVert^2_{L^2}\leq\left(\frac{1}{10|\beta|}+\frac{20}{C_2|\beta|}\right)(DF+CKF_1).
		\end{equation*}
		Therefore, we conclude \eqref{linest}.
		The definition of $M$ and the coercivity of $E$ yield \eqref{zbd}. For \eqref{omgbd}, we take inner product of following linearized equations with $M\Omega_{\neq},MJ_{\neq}$ separately
		\begin{equation*} 
			\left\{\begin{array}{l}
				\partial_t(M\Omega_{\neq})-(\partial_tM)\Omega_{\neq}-\beta\partial_XMJ_{\neq}-\nu\Delta_LM\Omega_{\neq}=0,\\
				\partial_t(MJ_{\neq})-(\partial_tM)J_{\neq}-\beta\partial_XM\Omega_{\neq}-\mu\Delta_L MJ_{\neq}+2\partial_{XY}^LM\Phi_{\neq}=0.
			\end{array}\right.
		\end{equation*}
		Then we get
		\begin{equation*}
			\frac{1}{2}\frac{\rm{d}}{\rm{d}t}(\left\lVert M\Omega_{\neq}\right\rVert^2_{L^2}+\left\lVert MJ_{\neq}\right\rVert^2_{L^2})\leq\left|\left\langle\frac{\partial_t\Delta_L}{\Delta_L}MJ_{\neq},MJ_{\neq}\right\rangle\right|\leq2\lVert MQ\rVert_{L^2}\lVert MJ_{\neq}\rVert_{L^2}.
		\end{equation*}
		Integrating over time implies
		\begin{align*}
			(\left\lVert M\Omega_{\neq}\right\rVert_{L^2}+\left\lVert MJ_{\neq}\right\rVert_{L^2})(t)&\lesssim(\left\lVert M\Omega_{\neq}\right\rVert_{L^2}+\left\lVert MJ_{\neq}\right\rVert_{L^2})(0)+\int_{0}^{t}\lVert MQ\rVert_{L^2}(\tau)\rm{d}\tau\\
			&\lesssim(\left\lVert M\Omega_{\neq}\right\rVert_{L^2}+\left\lVert MJ_{\neq}\right\rVert_{L^2})(0)+t(\left\lVert MZ\right\rVert_{L^2}+\left\lVert MQ\right\rVert_{L^2})(0)\\
			&\lesssim\langle t\rangle(\left\lVert M\Omega_{\neq}\right\rVert_{L^2}+\left\lVert MJ_{\neq}\right\rVert_{L^2})(0),
		\end{align*}
		where we used $\left\lVert MZ\right\rVert_{L^2}+\left\lVert MQ\right\rVert_{L^2}\leq\left\lVert M\Omega_{\neq}\right\rVert_{L^2}+\left\lVert MJ_{\neq}\right\rVert_{L^2}$. By the definition of $M$ \eqref{m}, we deduce \eqref{omgbd}.
		Finally to estimate $U$, using $\langle|k,\eta-kt|\rangle\langle|k,\eta|\rangle\gtrsim\langle|kt|\rangle$, we have
		\begin{align*}
			&\left\lVert U^1_{\neq}\right\rVert^2_{H^N}=\sum_{k\neq0}\int_{\mathbb{R}}\langle|k,\eta|\rangle^{2N}\left|\frac{|\eta-kt|}{k^2+(\eta-kt)^2}\hat{\Omega}_k\right|^2{\rm d}\eta\leq\sum_{k\neq0}\int_{\mathbb{R}}\langle|k,\eta|\rangle^{2N}\left|\hat{Z}_k\right|^2\mathrm{d}\eta=\left\lVert Z\right\rVert^2_{H^N},\\
			&\left\lVert U^2_{\neq}\right\rVert^2_{H^{N-1}}=\sum_{k\neq0}\int_{\mathbb{R}}\langle|k,\eta|\rangle^{2N-2}\left|\frac{|k|}{k^2+(\eta-kt)^2}\hat{\Omega}_k\right|^2\mathrm{d}\eta\leq\sum_{k\neq0}\int_{\mathbb{R}}\langle|k,\eta|\rangle^{2N}\left|\frac{1}{\langle t\rangle}\hat{Z}_k\right|^2\mathrm{d}\eta=\frac{1}{\langle t\rangle}\left\lVert Z\right\rVert^2_{H^N}.
		\end{align*}
		Then by \eqref{zbd}, we conclude \eqref{ubd} and complete the proof.
	\end{proof}
	
	\begin{proof}[Proof of Proposition \ref{linearest2}]
		The estimates are similar as \cite{K24}, and we just give a sketch of bound for $L_1$ and $L_6$ in \eqref{error}. When $|k|\leq C_2\mu^{-\frac{1}{2}}$, for $L_1$, noting that
		\begin{equation*}
			\nu\leq\frac{20\nu}{1+\left|\nu\left(t-\frac{\eta}{k}\right)\right|^2},\ \mathrm{if}\ \left|t-\frac{\eta}{k}\right|\leq4\nu^{-1},
		\end{equation*}
		we get
		\begin{equation*}
			L_1\leq\frac{100}{C_4|\beta|}\left\|\sqrt{-\frac{\partial_tM^4_k}{M^4_k}}M_k\hat{Q}_k\right\|_{L^2}^2+\frac{\nu}{10|\beta|}\|\sqrt{p_k}M_k\hat{Z}_k\|_{L^2}^2.
		\end{equation*}
		Turning to $L_6$, by the definition of $M^5$ and the fact
		\begin{equation*}
			\left|\frac{\partial_tp_k}{2p_k}\right|\leq\frac{\mu}{50}p_k,\ \mathrm{if}\ |t-\frac{\eta}{k}|\geq4\mu^{-\frac{1}{3}},
		\end{equation*} we obtain
		\begin{equation*}
			L_6\leq\left\|\sqrt{-\frac{\partial_tM^5_k}{M^5_k}}M_k\hat{Q}_k\right\|_{L^2}^2+\frac{1}{50}DF.
		\end{equation*}
		If $|k|>C_2\mu^{-\frac{1}{2}}$, by \eqref{bdm2}, we have
		\begin{equation*}
			L_1+L_6\leq\frac{200}{C_2}\left(\left\|\sqrt{-\frac{\partial_tM^2_k}{M^2_k}}M_k\hat{Q}_k\right\|_{L^2}^2+DF\right)+\frac{1}{4}DF.
		\end{equation*}
		The estimates of other terms are similar to the treatment in Proposition \ref{linearest}, completing the proof.
	\end{proof}
	\begin{proof}[Proof of Proposition \ref{linearest3}]
		If $|k|\leq C_2\nu^{-\frac{1}{2}}$, the proof is similar to the argument in \cite{ZZ23}, and if $|k|> C_2\nu^{-\frac{1}{2}}$, the proof is similar to treatment above, therefore, we omit further details.
	\end{proof}
	
\vspace{4 mm}
	
\noindent \textbf{Acknowledgements:} The work of FW is supported by the National Natural Science Foundation of China (No. 12101396, 12471223, 12331008, and 12161141004).
	\bibliography{ref}
\end{document}